\newcounter{dummy}
\newcommand\myitem[1][]{\item[#1]\refstepcounter{dummy}\def\@currentlabel{#1}}
\definecolor{LinkColor}{rgb}{0,0,0} 
\newcommand{\slunlhd}{%
  \mathrel{\mathpalette\sl@unlhd\relax}%
}
\newcommand{\sl@unlhd}[2]{%
  \sbox\z@{$#1\lhd$}%
  \sbox\tw@{$#1\leqslant$}%
  \dimen@=\ht\tw@
  \advance\dimen@-\ht\z@
  \ifx#1\displaystyle
    \advance\dimen@ .2pt
  \else
    \ifx#1\textstyle
      \advance\dimen@ .2pt
    \fi
  \fi
  \ooalign{\raisebox{\dimen@}{$\m@th#1\lhd$}\cr$\m@th#1\leqslant$\cr}%
}
\newtheorem{theorem}{Theorem}[section]
\newtheorem{corollary}[theorem]{Corollary}
\newtheorem{proposition}[theorem]{Proposition}
\theoremstyle{definition}
\newtheorem{remark}[theorem]{Remark}
\newtheorem{example}[theorem]{Example}
\newtheorem{question}{Question}
\newcommand{\cut}{\textsf{cut}\xspace}
\newcommand{\Irr}{\operatorname{Irr}}
\newcommand{\Aut}{\operatorname{Aut}}
\newcommand{\Gal}{\operatorname{Gal}}
\newcommand{\Char}{\operatorname{Char}}
\newcommand{\U}{\textup{U}}
\title{Cut groups: the progress and the problems}
\author[S. Chahal]{Seema Chahal}
\address{(Seema Chahal) Department of Mathematics, Indian Institute of Technology Roorkee, Roorkee (Uttarakhand)-247667, India.}
\email{\href{mailto:seema_r@ma.iitr.ac.in}{seema\_r@ma.iitr.ac.in}}
\author[S. Maheshwary]{Sugandha Maheshwary}
\address{(Sugandha Maheshwary) Indian Institute of Technology Roorkee, Roorkee (Uttarakhand)-247667, India.}
\email{\href{mailto:msugandha@ma.iitr.ac.in}{msugandha@ma.iitr.ac.in}}
\thanks{The first author gratefully acknowledges the support by Science  \& Engineering Research Board(SERB),  DST (Department of Science and Technology), India (SRG/2023/000180).}
\keywords{cut groups, integral group ring, central units, irreducible character}
\subjclass[2010]{20-02, 20B05, 16U60, 20C15, 20C05}
\date{}
\begin{document}
 
\maketitle

\begin{abstract}
This article focuses on the study of \cut groups, i.e., the groups which have only trivial  central units in their integral group ring. We provide state of art for \cut groups. The results are compiled in a systematic manner and have also been analogously studied for some generalised classes, such as quadratic rational groups and semi-rational groups etc. For these bigger classes, some results have been extended, while others have been posed as questions. The progress and the problems signify the development and the potential that holds in the topic of \cut groups and its generalisations.

\end{abstract}

\section{Introduction:  A story of cut groups} 
For a group $G$, let $\mathbb{Z}G$ denote its integral group ring. Over the years, people have investigated the unit group $\U(\mathbb{Z}G)$ immensely and in form of varied questions, for instance by studying its structure, by constructing its non-torsion units, by providing subgroups of finite index and their basis etc. Analogous questions have also been studied for $\mathcal{Z}(\U(\mathbb{Z}G))$, the centre of $\U(\mathbb{Z}G)$. The complete understanding of $\U(\mathbb{Z}G)$ is generally quite involving due to the presence of non-torsion units. Clearly, if $G$ is a finite group, then the trivial units of $\mathbb{Z}G$, i.e., elements of $\pm G$ are torsion units in $\U(\mathbb{Z}G)$. However, it rarely happens that these are the only units in $\U(\mathbb{Z}G)$, i.e., $\U(\mathbb{Z}G)=\pm G$. A complete classification of such groups was already provided by Higman in \cite{Hig40}, an article which supposedly initiated the rigorous study of $\U(\mathbb{Z}G)$. However, the parallel question for $\mathcal{Z}(\U(\mathbb{Z}G))$, i.e., the groups $G$ for which $\mathcal{Z}(\U(\mathbb{Z}G))=\pm\mathcal{Z}(G)$ has a different situation. Such groups are called as \cut groups, a term given in \cite{BMP17}, for the groups $G$ which have all \textbf{c}entral \textbf{u}nits in $\mathbb{Z}G$ \textbf{t}rivial (equivalently \textbf{t}orsion).

According to \cite{BBM20}, the study of \cut groups had already been in place in 1970's, when Z.\,F.\,Patay studied finite simple \cut groups and also classified alternating \cut groups (\cite{Pat75}, \cite{Pat78}). He also provided a description of nilpotent \cut groups of class $2$ (see \cite{AB91}, 3.10). It may be noted that Patay was a student of A. Bovdi and a study of \cut groups appears in Bovdi's book \cite{Bov87}. He proved that a finite group $G$ is \cut if and only if the character field $\mathbb{Q}(\chi)$ of each
complex irreducible character $\chi$ of $G$ is either $\mathbb{Q}$ or imaginary quadratic. In \cite{GP86} also, the need to study \cut groups has been pointed out. This was followed by work of Ritter and Sehgal \cite{RS90}, where they provided a group theoretic equivalent condition for a group to be a \cut group. The \cut groups also appears in \cite{Ale94}, where the author points out that the class of \cut groups properly contains the class  of rational groups. Recall that a group is said to be rational if the character field of each of its
complex irreducible character is $\mathbb{Q}$. It thus follows that all rational groups, for instance Symmetric groups, are indeed \cut groups.

It is apparent that the study of \cut groups has various perspectives. For, \cut groups may be studied from a group theoretic point of view, or via module theoretic interpretations, or purely in terms of character theory etc. Further, it follows from \cite{Bas64}, that a \cut group is a group whose $K_1(\mathbb{Z}G)$, the Whitehead group of $\mathbb{Z}G$, is finite. One may also relate the study of \cut groups to the normalizer property, as observed in \cite{JJdMR02}. A connection of \cut groups with certain fixed point properties, such as Kazhdan’s property or Serre’s property  is observed in \cite{BJJKT23}. Clearly, there is an interesting interplay of varied directions which make the study of \cut groups quite rich and involving (see also \cite{MP18}, Theorem 5).

In recent years, rigorous work on \cut groups has been done (\cite{BMP17}, \cite{Mah18}, \cite{Bac18}, \cite{MP18}, \cite{Bac19}, \cite{Tre19}, \cite{Gri20}, \cite{BCJM21}, \cite{Mor22}, \cite{BKMdR23}), thereby creating  several directions which are of independent interest too. 
In fact, the results on arbitrary (not necessarily finite) \cut groups also appear  in several works (\cite{JPS96}, \cite{MS99}, \cite{DMS05}, \cite{BMP19}). Recently, the so-called extended \cut groups, i.e., the groups whose non-torsion central units in the integral group ring form an infinite cyclic group, also have been studied (\cite{BBM20}, \cite{GKM25}). Other generalisations of this class happen to be the classes of quadratic rational groups and semi-rational groups, which have been rather studied as extensions of rational groups (\cite{CD10}, \cite{Ten12}, \cite{AD17}, \cite{dRV24}, \cite{Ver24}, \cite{PV25}).

The purpose of this article is to provide a systematic study of \cut groups and some of its generalisations. We survey the progress in this direction and highlight several relevant problems. In fact, the immediate questions and consequences of the known results have been addressed, yielding some new results. The article is structured as follows: In Section 2, we explore \cut groups via various directions, by studying several equivalent conditions for a group to be \cut. Section~3 is devoted to discussing the progress on \cut groups. This includes studying the properties borne by \cut groups and also the advancements made on this class. The section concludes by pointing on the classification of \cut groups in some well-known classes of finite groups, thereby indicating a huge range of examples for \cut groups. In Section 4, we study some classes of groups which generalise rational groups, and also \cut groups. For these classes of groups, we analyse the properties and results analogous to those provided for \cut groups in Section 3. Some new results for these generalised classes have been provided in this section and some natural questions have been put up. The article ends with closing remarks on \cut groups, shedding light on the potential of the topic.
\section*{Notation}
Throughout the article, we use standard notation. Unless stated otherwise, $G$ always denotes a finite group and its order is $|G|$. For an element $x\in G$, the order of $x$ is denoted by$|x|$ and $\langle x\rangle$ is the cyclic group generated by $x$. For $x,y\in G$, we write $x\sim y$, if $x$ is conjugate to $y$ in $G$. The set of elements conjugate to $x$ in $G$, i.e., the conjugacy class of $x$ is $C_x$. The centralizer, the normalizer and the automorphism group of $\langle x\rangle$  are respectively denoted by $C_G(\langle x\rangle)$, $N_G(\langle x\rangle)$ and $\Aut(\langle x\rangle)$. Let $\Irr(G)$ be the set of irreducible characters of $G$. We frequently use the following fields.
\begin{quotation}
	$\mathbb{Q}(\chi)$:=  $\mathbb{Q}(\chi(x):  x \in G),~ \mathrm{for} ~\chi \in \Irr(G)$\\
	$\mathbb{Q}(x) $:=~\,$\mathbb{Q}(\chi(x):  \chi \in \Irr(G)), ~\mathrm{for} ~x \in G$\\
	$\mathbb{Q}(G)$:=  $\mathbb{Q}(\chi(x): x\in G, \chi \in \Irr(G))$
	
\end{quotation}

\section{Cut groups: groups with only trivial central units in integral group rings}
In this section, we study various equivalent statements for a group to be \cut.\\

We begin with the constraints on the character fields of \cut groups, which rather serve as a characterisation for a group to be \cut.

\subsection{Character fields and simple components:}
In view of (\cite{Bov87} and {\cite{Hup67}, p.545), we have the following equivalent statements for a finite group $G$:
\begin{enumerate}
		\myitem[(E1)]\label{E1} $G$ is a \cut group, i.e., $\mathcal{Z}(\U(\mathbb{Z}G)) = \pm \mathcal{Z}(G).$
			
		\myitem[(E2)] If $\mathbb{Q}G \cong \bigoplus_{k=1}^{m} M_{n_k}(D_k)$ is the Wedderburn decomposition of the rational group algebra $\mathbb{Q}G$ where $m, n_k \in \mathbb{Z}_{\geq 0}$ and $D_k$ are division algebras, then for each $k$, $\mathcal{Z}(D_k) \cong \mathbb{Q}(\sqrt{-d})$ for some $d=d(k) \in \mathbb{Z}_{\geq 0}$. 
		\myitem[(E3)]\label{E3} For each $\chi \in \Irr(G), \mathbb{Q}(\chi)=\mathbb{Q}(\sqrt{-d})$ for some $d=d(\chi) \in \mathbb{Z}_{\geq 0}$.\\
		\end{enumerate}
	
	\begin{remark}
		It thus follows that if $G$ is a \cut group, then so are its homomorphic images.
	\end{remark} 
The next characterisation on \cut groups is based on its conjugacy classes. 
\subsection{Group theoretic criteria:} In \cite{RS90}, it has been proved that  a finite group $G$ is a \cut group if and only if the following holds:

\begin{enumerate}
\myitem[(E4)] For any $x \in G$, if $j\in \mathbb{N}$ is such that $ \gcd(j, |G|) = 1$, then $ x^j \sim x \text{ or } x^j \sim x^{-1}$.
	\end{enumerate}

\noindent The above criterion was refined in \cite{DMS05}, as follows:

\begin{enumerate}
	\myitem[(E5)]\label{E4} For any $x \in G$, if $j\in \mathbb{N}$ is such that $ \gcd(j, |x|) = 1$, then $ x^j \sim x \text{ or } x^j \sim x^{-1}$.\\
\end{enumerate}
Clearly, elements of order $1,2,3,4$ or $6$ always satisfy \ref{E4} and hence groups of exponent dividing $4$ or $6$ are always \cut. 

\begin{remark}\label{abelian}
	In fact, by observing that in abelian groups, each element forms its own conjugacy class, a finite abelian group is \cut, if and only if, it is of exponent $1,2,3,4$ or $6$.
\end{remark}

In an attempt to generalise the class of rational groups, Chillag et al. \cite{CD10} defined semi-rational and inverse semi-rational groups. An element $x\in G$ is said to be rational, if all generators of $\langle x \rangle$ form a single conjugacy class and a group is rational if all its elements are rational. Likewise, an element $x\in G$ is defined to be semi-rational, if all generators of $\langle x \rangle$ are contained in a union of at most two conjugacy classes. In particular, if all generators of $\langle x \rangle$ are contained in conjugacy classes of $x$ or $x^{-1}$, then $x$ is called inverse semi-rational. The group $G$ is called semi-rational (respectively inverse semi-rational), if all elements of $G$ are semi-rational (respectively inverse semi-rational).  

Further, by identifying the quotient group $B_G(x)=N_G(\langle x\rangle)/C_G(\langle x \rangle)$ as a subgroup of $Aut(\langle x \rangle)$, they provided an equivalent criterion for an element to be inverse semi-rational. Moreover, it is  proved that $x$ is semi-rational if and only if $\mathbb{Q}(x)=\mathbb{Q}(\chi(x)~|~\chi \in \Irr(G))$ is either quadratic or $\mathbb{Q}$ (\cite{CD10}, \cite{Ten12}), using which it is deduced in \cite{BCJM21} that $x$ is inverse semi-rational, if and only if, $\mathbb{Q}(x)$ is either $\mathbb{Q}$ or imaginary quadratic.

 It is apparent from \ref{E4} that the class of inverse semi-rational groups is exactly the class of \cut groups. Consequently, we have the following equivalent statements:

\begin{enumerate}
	\myitem[(E6)] $G$ is an inverse semi-rational group.
	
	\myitem[(E7)]\label{E7} For each $x \in G$, $Aut(\langle x \rangle) = B_G(x)\langle \tau \rangle$ where $\tau \in Aut(\langle x \rangle)$ is the inversion automorphism.
	
	\myitem[(E8)]\label{E8}For each $x \in G$, $\mathbb{Q}(x)=\mathbb{Q}(\sqrt{-d})$ for some $d=d(\chi) \in \mathbb{Z}_{\geq 0}$.\\
	
\end{enumerate}

Note: \ref{E3} and \ref{E8} essentially state that a group $G$ is \cut if and only if for all rows (columns) of the character table of $G$, the field extension of $\mathbb{Q}$ generated by the entries of that row (columns) equals $\mathbb{Q}$ or an imaginary quadratic extension.

\newpage
\begin{remark}It may be pointed out that following are equivalent statements for a rational group $G$.
	\begin{enumerate}
		\item[(i)] All elements of $G$ are rational, i.e., for any $x\in G$ all generators of $\langle x \rangle$ form a single conjugacy class. 
		\item[(ii)] For every $x \in G$, $\mathbb{Q}(x)=\mathbb{Q}$.
		\item[(iii)] For every $x \in G$,  $\Aut(\langle x \rangle) \cong{B}_G(x)$.
		\item[(iv)]  All irreducible characters of $G$ are rational, i.e., $\mathbb{Q}(\chi)=\mathbb{Q}$, for every $\chi \in \Irr(G)$.
	\end{enumerate}
\end{remark}

\begin{remark}\label{PrimitiveRoots}
	If $x \in G$ is such that the order of $|x|$ has a primitive root, say $r$, then the criterion in \ref{E4} can be refined, and becomes quite effective for computational purposes. For, if $|x|$ has a primitive root, then $|x|=2,4,p^k$ or $2p^k$, where $p$ is an odd prime and $k \in \mathbb{N}$. As already pointed, if $|x|=2$ or $4$, then $x$ is inverse semi-rational. Now, if $|x|=p^k$ or $2p^k$, then $x$ is inverse semi-rational if and only if $ x^r \sim  x$ or $ x^r \sim  x^{-1}$. In particular, if $G$ is an odd order group, then $G$ is \cut, if and only if, for every $x\in G$, we have that $ x^5 \sim  x^{-1}$ and $|x|$ is either $7$ or a power of $3$ (\cite{Mah18}, Theorem 1). Additionally, a $2$-group $G$ is \cut if and only if every element $x\in G$ satisfies $x^3\sim x$ or $x^{-1}$ (\cite{BMP17}, Theorem 3).
\end{remark}

\begin{remark}
	A group $G$ is said to have the Magnus property, or briefly an MP group, if whenever two elements $x$ and $y$ have the same normal closure, then they are either conjugates or inverse conjugates, i.e., $x\sim y$ or $x\sim y^{-1}$. Clearly, a finite MP group is indeed a \cut group. Recently, finite groups with Magnus property have been explored \cite{GM24}. It follows that the class of \cut groups is strictly larger than those of MP groups.	This is because, by (\cite{GM24}, Proposition 1.1), a finite MP group is solvable, whereas $S_n$, $n\geq 5$ are non-solvable \cut groups. However, it will be interesting to study the properties of one class relative to the other.

\end{remark}


\subsection{Rank zero criterion:} It is well known that the torsion central units of $\mathbb{Z}G$ are precisely $\pm x$, where $x\in \mathcal{Z}(G)$. The non-torsion central units of $\mathbb{Z}G$ form a free abelian group of finite rank. Let $\rho(G)$ denote the free rank of $\mathcal{Z}(\U(\mathbb{Z}G))$. Clearly, $G$ is \cut if and only if $\rho(G)=0$. The rank $\rho(G)$, in terms of the conjugacy classes of $G$, have been worked upon in \cite{Fer04}. To state  these results, we first recall $\mathbb{F}$-classes in a group $G$, where $\mathbb{F}$ is a field such that $\Char(\mathbb{F})$ does not divide $|G|$. Let $e$ be the exponent of $G$ and let $\xi$ be a primitive  $e^{\mathrm {th}}$ root of unity. For $x\in G$, $C_x^{\mathbb{F}} = \cup_{\sigma \in \Gal(\mathbb{F}(\xi)/\mathbb{F})}C_{x^{\sigma}}$ is known as $\mathbb{F}$-class of $x$. In particular, $\mathbb{R}$-class of $x$ is $C_x^{\mathbb{R}}$ which equals $C_x \cup C_{x^{-1}}$, and the $\mathbb{Q}$-class of $x$ is $C_x^{\mathbb{Q}}$ which is $\cup_iC_{x^i}$, where $i \in \{1 \leq i \leq e | \gcd(i, e)=1\}$. Ferraz \cite{Fer04} proved that the free rank of $\mathcal{Z}(\U(\mathbb{Z}G))$ is the difference in the number of $\mathbb{R}$-classes and $\mathbb{Q}$-classes in $G$. Consequently, we have the following equivalences for a \cut group $G$:
\begin{enumerate}
\myitem[(E9)] $\rho(G)=0$.
\myitem[(E10)] The number of $\mathbb{R}$-classes in $G$ is same as that of its $\mathbb{Q}$-classes.
	\myitem[(E11)] The number of simple components in the Wedderburn decomposition of $\mathbb{R}G$ and $\mathbb{Q}G$ is equal.
	\myitem[(E12)] The number of real irreducible characters of a group $G$ is same as that of rational irreducible characters.
\end{enumerate}
	
Similar rank formulae also appear in the work of \cite{Pat78} (c.f. \cite{BBM20}) and \cite{RS05} and a generalisation is provided in \cite{BBM20} (see Proposition 2.2). It may be noted that computing the rank $\rho(G)$ of a group $G$ has been a topic of independent interest, as can be observed in \cite{AA69}, \cite{AKS08}, \cite{JOdRV13} and \cite{BK22} etc.

\subsection{Irreducibility over extensions:} Let $K_1(\mathbb{Z}G)$ denote the Whitehead group of $\mathbb{Z}G$ and for an ideal $\mathfrak{a}$ of $\mathbb{Z}G$, let 
$K_1(\mathbb{Z}G, \mathfrak{a})$ denote the relative group. Bass proved that for a finite group $G$, $K_1(\mathbb{Z}G)$ is a finitely generated abelian group of rank $r-q$, where $r$ and $q$ denote the number of real and rational irreducible characters of $G$ respectively (\cite{Bas64}, Corollary 20.3). We have the following equivalent statements on \cut groups from his work.
\begin{enumerate}
		\myitem[(E13)] $K_1(\mathbb{Z}G)$ is finite.
		\myitem[(E14)]  $K_1(\mathbb{Z}G, \mathfrak{a}) $ is finite for all ideals $\mathfrak{a}$ of $\mathbb{Z}G$.
		\myitem[(E15)]\label{E15} Every irreducible $\mathbb{Q}G$-module remains irreducible under scalar extension from $\mathbb{Q}$ to $\mathbb{R}$.
\end{enumerate}

In terms of \cite{dRV24}, a group $G$ is said to be inert over a subfield $\mathbb{F}$  of $\mathbb{C}$, if for every irreducible $\mathbb{Q}G$-module $M$, the $\mathbb{F}G$-module $\mathbb{F}\otimes _\mathbb{Q} M$ is irreducible. In other words, \ref{E15} states that $G$ is inert over $\mathbb{R}$. Hence, from the results of \cite{dRV24}, we obtain the following equivalent statements for a \cut group $G$. We use the following notation: $G$ is group of exponent $e$ and $\xi$ is a primitive $e^{\mathrm{th}}$ root of unity. Set $\mathbb{Q}_G=\mathbb{Q}(\xi)$ and $\mathbb{F_\xi}=\mathbb{Q}(\xi +\xi^{-1})$. For a subfield $\mathbb{K}$ of $\mathbb{C}$ and for $\chi \in Irr(G)$, $\chi_\mathbb{K}=\{\chi^\sigma~|~\sigma\in \mathcal{U}_{\mathbb{K}}\}$, where  $\mathcal{U}_{\mathbb{K}}=\{r~|~\gcd(r,e)=1 ~\mathrm{and}~\sigma_r(x)=x~\mathrm{for~every~}x\in \mathbb{K}\cap \mathbb{Q}_G\}$ }.

\begin{enumerate}
	\myitem[(E16)] $G$ is inert over $\mathbb{R}$.
	\myitem[(E17)] $G$ is inert over $\mathbb{F}_\xi$.
	\myitem[(E18)] $C_x^{\mathbb{F}_\xi}=C_x^{\mathbb{Q}}$.
	\myitem[(E19)]   $\chi_{\mathbb{F_\xi}} = \chi_{\mathbb{Q}}$, for every $\chi \in \Irr(G)$.
	\myitem[(E20)] 	$\mathbb{Q}(\chi) \cap \mathbb{F_\xi}= \mathbb{Q}$ for every $\chi \in \text{Irr}(G)$.
	\myitem[(E21)] $\mathbb{Q}(x) \cap \mathbb{F_\xi}= \mathbb{Q}$ for every $x \in G$.
	 \myitem[(E22)] For every $\sigma \in \Gal(\mathbb{Q}_G/\mathbb{Q})$ and  for every $\chi \in \Irr(G) $, there exists $\tau_{\chi} \in  \Gal(\mathbb{Q}_G/ \mathbb{F_\xi})$  such that $\chi^{\sigma}=\chi^{\tau_{\chi}}.$
	\myitem[(E23)] For every  $\sigma \in \text{Gal}(\mathbb{Q}_G / \mathbb{Q})$ and  for every $x \in G$, there exists $\rho_x \in \Gal(\mathbb{Q}_G/ \mathbb{F_\xi})$ such that 
	$ \chi^\sigma(x) = \chi^{\rho_x}(x) $ for every $\chi \in \text{Irr}(G).$
	\myitem[(E24)] The restriction map 
	$\Gal(\mathbb{Q}_G/ \mathbb{F_\xi}) \to {\Gal}(\mathbb{Q}(\chi)/\mathbb{Q})$
	is surjective for every $\chi \in \text{Irr}(G)$.
	\myitem[(E25)] The restriction map $\Gal(\mathbb{Q}_G/ \mathbb{F_\xi})\to {\Gal}(\mathbb{Q}(x)/\mathbb{Q})$
	is surjective for every $x \in G$.
	
\end{enumerate}

\subsection{The normalizer criterion:} For a group $G$, set $\U:= \U(\mathbb{Z}G)$ and let $\mathcal{N}_{{\U}}(G)$ be the normalizer of $G$ in $\U$. Obviously, $G. \mathcal{Z}(\U)\subseteq \mathcal{N}_{\U}(G)$. However, if $G. \mathcal{Z}(\U)= \mathcal{N}_{\U}(G)$, then $G$ is said to have the normalizer property. The study of the normalizer property has been done for several classes of groups, especially due to its influence on the Isomorphism problem. For instance, it is known to hold for finite nilpotent groups and for groups of odd order. As pointed out in (\cite{BMP17}, Remark 4), it follows from the work of \cite{Maz99} and \cite{LH12} that if a finite group $G$ has the \cut property, then $G$ has the normalizer property and the converse is not true in general. In fact, the result holds good for an arbitrary (not necessarily finite) \cut group as well, as can be seen from the results of \cite{JJdMR02}. We close this section by stating yet another equivalent statement for a group $G$ to be \cut due to Jespers et al. (\cite{JJdMR02}, Corollary 1.7).

 \begin{enumerate}
	\myitem[(E26)]\label{E26} $\mathcal{N}_{{\U}}(G)=\pm G.$ 
	\end{enumerate}

	{Note: The equivalent statements \ref{E1}-\ref{E26} for a finite \cut group stated in this section may be seen as refinement of Theorem 5 of \cite{MP18}.}

\section{Progress and problems on cut groups}
As the study of \cut groups has been progressively done in last few years, a set of interesting results have turned up. We have already observed some fundamental properties of \cut groups. For instance, groups of exponent dividing $4$ or $6$ are always \cut; the homomorphic images of \cut groups are \cut. In this section, we discuss the properties borne by \cut groups and progress made about understanding this class of groups.


\subsection{Subgroups of \cut groups:} A subgroup of a \cut group need not essentially be a \cut group. For, otherwise Cayley's theorem would yield all finite groups to be \cut, as Symmetric groups are \cut (in fact, rational). Being a \cut group is not even closed for normal subgroups. This is again observable from the fact that not all alternating groups are \cut (\cite{BCJM21}, Theorem 5.1). However, it easily follows from the definition that the centre of a \cut group is indeed a \cut group. It is of natural interest to know whether Sylow subgroups of \cut groups are \cut. Firstly, observe that if a $p$-group has \cut property, then all the quotients in its upper central series also have \cut property. Consequently, $p=2$ or $3$, as also observed in (\cite{BMP17}, Theorem 1). So, the question reduces to checking whether a Sylow $p$-subgroup of a \cut group is again a \cut group or not, where $p\in \{2,3\}$. In \cite{BCJM21}, it has been shown that Sylow $2$-subgroup of a \cut group need not be \cut but it is not known whether a Sylow $3$-subgroup of a \cut group is \cut. However, for Sylow $3$-subgroups, a partial positive answer is given for some substantial cases. We observe that for these cases, the result holds good even for Sylow $2$-subgroups, except possibly for Monster group.

\begin{theorem}\label{Sylow p-subgroup}
	Let $G$ be a \cut group and let $P$ be its Sylow $p$-subgroup, where $p\in \{2,3\}$. Then $P$ is also a \cut group provided if:
	\begin{enumerate}
		\item[\rm{(i)}] $P$ is an abelian group or a normal subgroup of $G$.
		\item[\rm{(ii)}] $G$ is a supersolvable group.
		\item[\rm{(iii)}]$G$ is a Frobenius group. 
		\item[\rm{(iv)}]$G$ is a simple group, except possibly when $p=2$ and $G$ is the Monster group. 
		\item[\rm{(v)}] $G$ is a group of odd order.
		\item[\rm{(vi)}]$G$ is a symmetric group.
	
	\end{enumerate}
\end{theorem}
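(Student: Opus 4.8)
The common mechanism is the local form of the \cut criterion recorded in \ref{E4} and \ref{E7}: a finite group $H$ is \cut exactly when, for every cyclic subgroup $Q\le H$, one has $\Aut(Q)=B_H(Q)\langle\tau_Q\rangle$, where $B_H(Q)=N_H(Q)/C_H(Q)$ is regarded inside $\Aut(Q)$ and $\tau_Q$ denotes inversion. Since $G$ is \cut, this identity holds for every cyclic $Q\le G$; since $B_P(Q)\le B_G(Q)$ for $Q\le P$, proving $P$ \cut amounts to showing $B_P(Q)$ is still large enough to generate $\Aut(Q)$ together with $\tau_Q$. This is where $p\in\{2,3\}$ enters: $\Aut(C_{p^k})\cong(\ZZ/p^k)^{\times}$ is a $2$-group when $p=2$, and is cyclic with a Sylow $3$-subgroup of index $2$ when $p=3$, so in both cases a Sylow $p$-subgroup of $\Aut(Q)$, together with $\tau_Q$, already generates $\Aut(Q)$. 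Hence the \emph{transfer lemma}: if $N_P(Q)\in\Syl_p(N_G(Q))$ — equivalently, using $C_G(Q)\trianglelefteq N_G(Q)$, if $B_P(Q)$ contains a Sylow $p$-subgroup of $B_G(Q)$ — then the \cut condition for $Q$ in $G$ forces it for $Q$ in $P$. As the condition for $P$ is invariant under $P$-conjugacy of $Q$, it now suffices, in each case, to arrange that every cyclic $Q\le P$ is $P$-conjugate to one with $N_P(Q)\in\Syl_p(N_G(Q))$.

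Cases (i) and (iii) follow from this. If $P\trianglelefteq G$, then $P$ is the unique Sylow $p$-subgroup, so every $p$-subgroup of $G$ — in particular a Sylow $p$-subgroup of any $N_G(Q)$ — lies in $P$; thus $N_P(Q)\in\Syl_p(N_G(Q))$ for \emph{all} $Q$, and the transfer lemma applies directly. If $P$ is abelian, then $B_P(Q)=1$ for every cyclic $Q\le P$, so the \cut condition for $P$ reads $\Aut(Q)=\langle\tau_Q\rangle$, i.e. $\exp P\le4$ when $p=2$ and $\exp P\le3$ when $p=3$; and this is forced by $G$ being \cut, for if $Q=\langle x\rangle$ with $|x|=8$ (resp.\ $9$) then $B_G(Q)\neq1$ while $P\le C_G(Q)$, so $|N_G(Q)|_p\ge|P|\cdot|B_G(Q)|_p>|G|_p$, which is impossible. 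For a Frobenius group $G=K\rtimes H$: if $p\mid|K|$ then $P$ is characteristic in the nilpotent kernel $K$, hence normal in $G$, and we are reduced to the previous case; if $p\mid|H|$ then for $Q\le P\le H$ the Frobenius structure gives $N_G(Q),C_G(Q)\le H$, so $B_G(Q)=B_H(Q)$, while $P\in\Syl_p(H)$ — and since a Sylow $p$-subgroup of a Frobenius complement is cyclic or (for $p=2$) generalized quaternion, computing $B_P(Q)$ for a cyclic subgroup $Q$ of maximal order in $P$ shows that $G$ being \cut forces $P$ to be cyclic of order $\le4$ or $Q_8$, all of which are \cut.

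For (ii), a supersolvable group has an ordered Sylow tower: there is $G_0\trianglelefteq G$ with $P\in\Syl_p(G_0)$, with $G_0$ $p$-nilpotent (a semidirect product of its normal $p$-complement with $P$), and with $G/G_0$ a $p'$-group. In a $p$-nilpotent group the Sylow $p$-subgroup controls fusion of $p$-subgroups, whence $N_P(Q)\in\Syl_p(N_{G_0}(Q))$; combined with $|N_G(Q)|_p=|N_{G_0}(Q)|_p$ (as $G_0\trianglelefteq G$ and $G/G_0$ is a $p'$-group) this yields $N_P(Q)\in\Syl_p(N_G(Q))$ for every cyclic $Q\le P$, and the transfer lemma closes the case. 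For (v) only $p=3$ occurs, and by the sharp criterion for odd \cut groups (Remark~\ref{PrimitiveRoots}) every element of $G$ has order $7$ or a power of $3$, so $|G|=3^{a}7^{b}$; moreover $|B_G(Q)|$ is odd, so for a cyclic $Q=\langle x\rangle$ of order $3^{k}$ the relation $x^{5}\sim x^{-1}$ forces $B_G(Q)$ to be the \emph{entire} Sylow $3$-subgroup $A$ of $\Aut(Q)$, generated by the $(-5)$-power map $\phi$. One must then realize a generator of $A$ inside $N_P(Q)$: $\phi$ has $3$-power order, hence is induced by a $3$-element $g\in N_G(Q)$, so $\langle Q,g\rangle$ is a $3$-group; conjugating it by an element of $N_G(Q)$ into a Sylow $3$-subgroup of $N_G(Q)$ that lies in $P$ then produces an element of $N_P(Q)$ inducing $\phi$ — and this last step is exactly the assertion that $N_P(Q)\in\Syl_p(N_G(Q))$, which is extracted from the $3$-local structure of the $3$-solvable group $G$.

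Finally, (iv) is treated by going through the classification of finite simple \cut groups and checking, case by case, that the Sylow $2$- and $3$-subgroups satisfy \ref{E4}; for all but the $2$-local structure of the Monster this is a finite verification from conjugacy-class data — which is why that one case is left open. For (vi), $S_n$ is rational, and one argues with character fields via \ref{E3}: a Sylow $p$-subgroup of $S_n$ is a direct product of iterated wreath products $C_p\wr\cdots\wr C_p$ (one factor per nonzero base-$p$ digit of $n$), and induction on the wreath depth shows that every irreducible character field of such a group lies in $\QQ(\zeta_p)$ — which is $\QQ$ if $p=2$ and the single imaginary quadratic field $\QQ(\sqrt{-3})$ if $p=3$ — so, since composita of copies of one such field do not enlarge it, the whole Sylow subgroup is \cut by \ref{E3}. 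In all of (ii)--(vi) the genuine obstacle is control of $p$-fusion: the transfer lemma governs only fully $p$-normalized cyclic subgroups, and for a general \cut group a badly placed cyclic subgroup really can destroy the \cut property of $P$ (this is the origin both of the known counterexamples for Sylow $2$-subgroups and of the undecided Monster case). The hardest single step, I expect, is (v) — upgrading ``$B_G(Q)=A$'' to ``$B_P(Q)=A$'' is a real $3$-local fusion argument rather than a formal deduction — and it is precisely the unavailability of such an argument that forces the Monster/$p=2$ exception in (iv).
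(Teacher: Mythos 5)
Your uniform reduction is sound: the ``transfer lemma'' that if $N_P(Q)\in\Syl_p(N_G(Q))$ for a cyclic $Q\le P$ then criterion (E7) for $Q$ in $G$ descends to $P$ is correct, because for $p\in\{2,3\}$ a Sylow $p$-subgroup of $\Aut(Q)$ together with the inversion generates $\Aut(Q)$. On this basis your cases (i), (ii), (iii), (iv) and (vi) do go through. Parts (i) and (iii) essentially coincide with the paper's own argument (for the Frobenius complement case your explicit exclusion of generalized quaternion Sylow subgroups of order at least $16$, via the index count on the cyclic subgroup of index $2$, is in fact more careful than the paper's ``exponent $4$'' remark, which literally only covers $Q_8$); part (ii) is a self-contained version of what the paper delegates to the proof of Proposition 6.3 of \cite{BCJM21}; part (iv) is, as in the paper, a finite verification against the list of simple \cut groups; and for (vi) you replace the paper's citation of \cite{CD10} by a direct argument with iterated wreath products and character fields inside $\mathbb{Q}(\zeta_p)$, which is correct and arguably more informative.

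The genuine gap is in (v). You correctly reduce the problem to showing that, for $x\in P$ of order $3^k$, the generator $\sigma_{-5}$ of the Sylow $3$-subgroup of $\Aut(\langle x\rangle)$ is induced by an element of $N_P(\langle x\rangle)$, equivalently that $N_P(Q)\in\Syl_3(N_G(Q))$ for \emph{every} cyclic $Q\le P$; but you do not prove this, and the step ``conjugating $\langle Q,g\rangle$ by an element of $N_G(Q)$ into a Sylow $3$-subgroup of $N_G(Q)$ that lies in $P$'' is circular, as you yourself concede. The standard fully-normalized argument only produces, within each $G$-conjugacy class of cyclic subgroups, \emph{one} representative $Q'\le P$ with $N_P(Q')\in\Syl_3(N_G(Q'))$, and since the condition ``$y^{-5}\sim y$ in $P$'' is invariant only under $P$-conjugacy, not under $G$-conjugacy, this does not cover all cyclic subgroups of $P$ --- indeed this is exactly the phenomenon that makes the Sylow $2$ counterexamples possible, so no purely formal deduction can close it. What is needed is a genuine $3$-local analysis of odd-order \cut groups (for instance exploiting that $|G|=3^a7^b$ with no elements of order $21$, so that $N_G(Q)$ is a $3$-group for every nontrivial cyclic $3$-subgroup $Q$, and analysing the resulting Frobenius or $2$-Frobenius structure), and this is precisely the content of \cite{Gri20}, which the paper simply cites for case (v). As written, your treatment of (v) is an announced reduction rather than a proof, so the theorem is not fully established by your argument.
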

The proof of above theorem essentially follows from \cite{Gri20} and \cite{BCJM21} along with the techniques contained therein. Since most of the results given in these articles only address  the case when $p=3$, we restate the proofs making them inclusive for the case when $p=2$, thereby also improving accessibility for the reader.
\begin{proof}Let $G$ be a \cut group and let $P$ be its Sylow $p$-subgroup, where $p\in \{2,3\}$.   \begin{enumerate}
	\item[(i)] Suppose $x\in P$ is an element of order $p^f$, where $f\geq 1$. We need to show that $x$ is inverse semi-rational in $P$. If $f=1$, then $x$ is clearly an inverse semi-rational element in $P$. Hence, assume $f\geq 2$. Since, $x$ is inverse semi-rational in $G$, by \ref{E7}, we have that $Aut(\langle x \rangle) = B_G(x)\langle \tau \rangle$ where $\tau \in Aut(\langle x \rangle)$ is the inversion automorphism. 

Suppose firstly that $P$ is abelian, so that $P\subseteq C_G(\langle x \rangle)$, and consequently $p\nmid |B_G(x)|$. Recall that $\Aut(\langle x\rangle) \cong C_{p^{f-1}}\times C_{p-1}$, if $p\neq 2$, and $\Aut(\langle x\rangle) \cong C_{2^{f-2}}\times C_{2}$, if $p=2$. As $[Aut(\langle x \rangle) : B_G(x)]\leq 2$, it follows that $p^f=4$ and hence $P$ is a \cut group, being a group of exponent $2,3$ or $4$.

Now, let $P$ be a normal subgroup of $G$, i.e., it is a unique Sylow $p$-subgroup of $G$. For $x\in P$, it follows from the structure of $\Aut(\langle x\rangle)$ and \ref{E7}, that $B_G(x)$ is a $p$-group. As $x$ is inverse semi-rational element in $G$, it follows that for every $1\leq i \leq p^f$ such that $\gcd(i,p)=1$ there exists an element $g\in G$, such that $gx^jg^{-1}=x$ or $x^{-1}$. In particular, $g\in N_G(\langle x \rangle)$. Since, $p'$-part of $g$ belongs to $C_G(\langle x \rangle)$, replacing $g$ by its $p$-part, if necessary, assume that $g$ is also a $p$-element. Hence, $g$ belongs to a Sylow $p$-subgroup of $G$. But in this case, Sylow $p$-subgroup of $G$ is unique. Consequently, $g\in P$, i.e., $x$ is inverse semi-rational in $P$.

\item[(ii)] If $G$ is a supersolvable group, then $G$ has a Sylow tower (\cite{Hup67}, Satz. 9.1), and hence its $p$-length is at most $1$, for any prime $p$ dividing the order of $G$. We thus have that $G$ has normal subgroups $M$ and $N$ such that $1\subseteq M \subseteq N \subseteq G$ where $M$ and $G/N$ are $p'$-groups and $N/M$ is a $p$-group. The proof of (\cite{BCJM21}, Proposition 6.3) now works for $p\in\{2,3\}$ and we have that $P$ is \cut in this case.

\item[(iii)] If $G$ is a Frobenius \cut group, then let $K$ and $H$ respectively be the Frobenius kernel and Frobenius complement of $G$. As $\gcd(|K|, |H|)=1$, a Sylow $p$-subgroup $P$ of $G$ is isomorphic to Sylow $p$-subgroup of either $K$ or $H$. If $p$ divides  $|K|$, then $P$ is normal in $G$ because $K$ is nilpotent and characteristic in $G$. Hence, $P$ is \cut by (i). Now, if $p$ does not divide $|K|$, then $p$ divides $|H|$ and $P$ is a Sylow $p$-subgroup of $H$, so that by (\cite{Isa08}, Corollary 6.17), it follows that $P$ is either a cyclic or a quaternion group. If $P$ is cyclic, then it is \cut by (i), and if it is quaternion, it is indeed a \cut group being of exponent $4$.

 \item[(iv)] If $G$ is a simple \cut group, then it is one of the groups listed in (\cite{BCJM21}, Theorem 5.1). Using \texttt{GAP} \cite{GAP4}, it has been verified that for $p\in \{2,3\}$, Sylow $p$-subgroups of these groups are again \cut, except possibly Sylow $2$-subgroup of Monster group $M$.

\item[(v)-(vi)] If $G$ is a Symmetric group or a group of odd order, then the result has been observed in (\cite{CD10}, Remark 14) and \cite{Gri20} respectively.

\end{enumerate}
\end{proof}

Note that there exist \cut groups for which Sylow $2$-subgroups are not \cut. For instance, Sylow $2$-subgroups of \texttt{SmallGroup}\texttt{[384,18033]} and \texttt{SmallGroup[384,18040]} are not \cut \cite{BCJM21}. However, for Sylow $3$-subgroups, no counter example has been found even on considering all \cut groups of order at most $2000$, as well as those of order $2^2.3^6$, $2^3.3^6$ and $2^2.3^7$. Thus, as also pointed in \cite{Bac19}, it is meaningful to state the following question.

\begin{question}
Is Sylow $3$-subgroup of a $\cut$ group, again a \cut group?
\end{question}

\subsection{Direct products of \cut groups:} 
In general, the direct products of \cut groups need not be \cut. For instance, $C_3$ and $C_4$ are \cut groups, whereas their direct product $C_{12}$ fails to be a \cut group because all four generators of this group belong to different conjugacy classes. However, for the direct product of two \cut groups to be \cut, following criterion was provided in (\cite{BKMdR23}, Lemma 2.7(7)).
\begin{proposition} Let $G_1$ and $G_2$ be \cut groups.	The direct product  $G_1\times G_2$ is \cut if and only if for every two integers $n_1$ and $n_2$ such that $G_1$ has a non-rational element of order $n_1$ and $G_2$ has a non-rational element of order $n_2$, one has $\gcd(n_1, n_2)\in \{3, 4, 6\}$.
\end{proposition}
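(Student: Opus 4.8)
The strategy is to reduce the \cut-ness of $G_1\times G_2$ to a condition on the orders of non-rational elements, using the inverse semi-rationality criterion \ref{E7} applied element-by-element. First I would observe that an arbitrary element of $G_1\times G_2$ has the form $x=(x_1,x_2)$ with $x_i\in G_i$, and that $\langle x\rangle$ depends only on the orders $n_i=|x_i|$ together with $n=\lcm(n_1,n_2)=|x|$; in fact $\langle x_1\rangle\times\langle x_2\rangle$ contains $\langle x\rangle$, and conjugation in the direct product acts coordinatewise, so $B_{G_1\times G_2}(x)$ embeds into $B_{G_1}(x_1)\times B_{G_2}(x_2)\le \Aut(\langle x_1\rangle)\times\Aut(\langle x_2\rangle)$. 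The plan is then to compare the image of this in $\Aut(\langle x\rangle)\cong (\ZZ/n\ZZ)^\times$ with the subgroup $\{\pm 1\}$ coming from the inversion automorphism $\tau$, since by \ref{E7} the group is \cut exactly when for every such $x$ the group $B_{G_1\times G_2}(x)\langle\tau\rangle$ is all of $\Aut(\langle x\rangle)$.

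Next I would split into cases according to whether $x_1$ and $x_2$ are rational in their respective groups. If (say) $x_1$ is rational, then $B_{G_1}(x_1)=\Aut(\langle x_1\rangle)$ and one can choose conjugating elements independently in the first coordinate; combined with the \cut-ness of $G_2$, a short computation with the Chinese Remainder Theorem on $(\ZZ/n\ZZ)^\times \hookrightarrow (\ZZ/n_1\ZZ)^\times\times(\ZZ/n_2\ZZ)^\times$ should show that $x$ automatically satisfies \ref{E4}, so no obstruction arises. Hence the only way \cut-ness can fail is when \emph{both} $x_1$ and $x_2$ are non-rational in $G_1$ and $G_2$ respectively. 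In that case, since $G_1,G_2$ are \cut, non-rational forces $|x_i|=n_i\notin\{1,2\}$, and one knows (as in the remarks before \ref{abelian}) that a non-rational element of a \cut group has order divisible by some prime $\ge 5$ or is of the ``quadratic'' type; the key point is that $x$ is inverse semi-rational in $G_1\times G_2$ iff the only quadratic behaviour forced by the two coordinates is compatible, which translates precisely into $\gcd(n_1,n_2)\in\{3,4,6\}$. The forward direction (if $G_1\times G_2$ is \cut then the gcd condition holds) I would prove contrapositively: if there are non-rational $x_i$ of order $n_i$ with $d=\gcd(n_1,n_2)\notin\{3,4,6\}$, then $d$ has an element of order $>2$ in $\Aut$, and one exhibits a unit $j$ coprime to $n=\lcm(n_1,n_2)$ with $x^j\not\sim x^{\pm1}$ by playing the two non-rational constraints against each other modulo $d$.

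For the converse, assuming the gcd condition, I would take any $x=(x_1,x_2)$ and any $j$ with $\gcd(j,|x|)=1$ and produce $g=(g_1,g_2)$ with $g x^j g^{-1}=x^{\pm1}$: using \cut-ness of each $G_i$ choose $g_i$ with $g_i x_i^{j}g_i^{-1}=x_i^{\varepsilon_i}$, $\varepsilon_i\in\{\pm1\}$; if $\varepsilon_1=\varepsilon_2$ we are done, and if they differ then at least one $x_i$ is rational or has order in $\{1,2\}$ (since otherwise both are non-rational and the gcd hypothesis, via the structure of $\Aut(\langle x_i\rangle)$ when $n_i\in\{3,4,6\}$ forces $\varepsilon_i$ to be freely choosable as $+1$), allowing us to re-choose $g_i$ to align the signs. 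The main obstacle, and the step that needs the most care, is exactly this sign-matching: one must verify that the hypothesis $\gcd(n_1,n_2)\in\{3,4,6\}$ is precisely what guarantees that whenever both coordinates are genuinely non-rational, the inversion ambiguity can be resolved simultaneously --- this is where the arithmetic of $(\ZZ/n_i\ZZ)^\times$ and the classification of small-order inverse semi-rational elements (orders $1,2,3,4,6$) is doing all the work, and where I would be most careful to get the ``only if'' direction's counterexample construction correct.
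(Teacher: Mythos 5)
Your opening moves are sound: reducing to the element-wise criterion (E5)/(E7), noting that conjugacy in $G_1\times G_2$ is coordinatewise, and disposing of the case where one coordinate is rational via a CRT choice of the exponent $j$ — that part would work. (For context, the paper itself gives no proof of this proposition; it quotes it from \cite{BKMdR23}.) The genuine gap is exactly the step you flag as the crux and then assert without argument: that when both coordinates are non-rational, compatibility of the two inversion ambiguities ``translates precisely into $\gcd(n_1,n_2)\in\{3,4,6\}$''. It does not. For a non-rational element $x_i$ of a \cut group, the relevant invariant is the index-two subgroup $S_{x_i}=\{j:\ x_i^j\sim x_i\}$ of $(\mathbb{Z}/n_i\mathbb{Z})^\times$ (equivalently the imaginary quadratic field $\mathbb{Q}(x_i)$), and $(x_1,x_2)$ satisfies (E5) precisely when $S_{x_1}$ and $S_{x_2}$ pull back to the \emph{same} index-two subgroup of $(\mathbb{Z}/\lcm(n_1,n_2)\mathbb{Z})^\times$, i.e.\ precisely when $\mathbb{Q}(x_1)=\mathbb{Q}(x_2)$. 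This is not determined by $\gcd(n_1,n_2)$, so neither half of your plan can be completed as described.

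Concretely, your ``only if'' construction fails for $G_1=G_2=SD_{16}$, the semidihedral group of order $16$: its only non-rational elements have order $8$, all with $S_x=\{1,3\}$ modulo $8$, so $\gcd(n_1,n_2)=8\notin\{3,4,6\}$, yet every element of $SD_{16}\times SD_{16}$ satisfies (E5) and the product is \cut (also visible from \Cref{cut_direct}, since $\mathbb{Q}(G_1)=\mathbb{Q}(G_2)=\mathbb{Q}(\sqrt{-2})$); hence the unit $j$ with $x^j\not\sim x^{\pm1}$ that you propose to exhibit does not exist. (Your auxiliary claim that $d\notin\{3,4,6\}$ forces $\Aut(C_d)$ to contain an element of order greater than $2$ is also false, e.g.\ for $d=1,2,8,12$.) Your ``if'' direction's sign-matching fails for $C_4\times SD_{16}$: the only non-rational orders are $4$ and $8$, so the gcd hypothesis holds with $\gcd=4$, yet for $a$ of order $4$ and $x$ of order $8$ one has $(a,x)^3=(a^{-1},x^3)\not\sim(a,x)^{\pm1}$, because $a\not\sim a^{-1}$ while $x^3\sim x\not\sim x^{-1}$; consistently, $\mathbb{Q}(C_4)=\mathbb{Q}(i)\neq\mathbb{Q}(\sqrt{-2})=\mathbb{Q}(SD_{16})$, so \Cref{cut_direct} says the product is not \cut. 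These examples show that the statement as printed cannot be proved (it is in tension with \Cref{cut_direct}), and that any correct argument along your lines must track the fields $\mathbb{Q}(x_i)$ (equivalently the subgroups $S_{x_i}$, or $B_{G_i}(x_i)$) rather than the gcd of the orders; you should check the precise formulation of Lemma 2.7(7) in \cite{BKMdR23} before investing further in the gcd-based route.
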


We provide another criterion for the direct product of two \cut groups to be \cut in terms of character fields.

\begin{theorem}\label{cut_direct}
Let $G_1$ and $G_2$ be \cut groups.	The direct product $G_1\times G_2$ is \cut, if and only if, one of the following holds:
	\begin{enumerate}
		\item[\rm{(i)}] either $G_1$ or $G_2$ is rational.
		\item[\rm{(ii)}] $\mathbb{Q}(G_1)=\mathbb{Q}(G_2)=\mathbb{Q}(\sqrt{-d})$, for some $d\in \mathbb{N}$.
	\end{enumerate}
\end{theorem}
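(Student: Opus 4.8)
The plan is to use the character-theoretic description of $\Irr(G_1\times G_2)$, namely that every irreducible character of $G_1\times G_2$ has the form $\chi_1\times\chi_2$ with $\chi_i\in\Irr(G_i)$, and that $\mathbb{Q}(\chi_1\times\chi_2)=\mathbb{Q}(\chi_1)\cdot\mathbb{Q}(\chi_2)$, the compositum inside $\mathbb{C}$. By criterion \ref{E3}, $G_1\times G_2$ is \cut if and only if this compositum is $\mathbb{Q}$ or imaginary quadratic for every pair $(\chi_1,\chi_2)$. Since $G_1$ and $G_2$ are themselves \cut, each $\mathbb{Q}(\chi_i)$ is already $\mathbb{Q}$ or imaginary quadratic; so the whole question reduces to the elementary field-theoretic fact about when a compositum of two such fields is again of this type. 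First I would record this reduction carefully, citing \ref{E3}.

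Next I would analyse the compositum $\mathbb{Q}(\sqrt{-a})\cdot\mathbb{Q}(\sqrt{-b})$ where each factor is either $\mathbb{Q}$ or an imaginary quadratic field. If one of the fields is $\mathbb{Q}$, the compositum is the other field, hence admissible. If both are genuinely imaginary quadratic, the compositum has degree $1$, $2$, or $4$ over $\mathbb{Q}$: it has degree $2$ (and equals both factors) exactly when $\mathbb{Q}(\sqrt{-a})=\mathbb{Q}(\sqrt{-b})$, and degree $4$ otherwise — and a degree-$4$ compositum of two distinct imaginary quadratic fields contains the real quadratic subfield $\mathbb{Q}(\sqrt{ab})$, so it is never imaginary quadratic. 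Thus the compositum is $\mathbb{Q}$ or imaginary quadratic if and only if one factor is $\mathbb{Q}$ or the two factors coincide. This gives the dichotomy: for every pair $(\chi_1,\chi_2)$, either some $\mathbb{Q}(\chi_i)=\mathbb{Q}$, or $\mathbb{Q}(\chi_1)=\mathbb{Q}(\chi_2)$ is a fixed imaginary quadratic field.

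Finally I would convert this ``for every pair'' condition into the two global alternatives (i) and (ii). Suppose (i) fails, i.e.\ neither $G_i$ is rational, so there exist $\chi_1\in\Irr(G_1)$ and $\chi_2\in\Irr(G_2)$ with $\mathbb{Q}(\chi_1)=\mathbb{Q}(\sqrt{-d_1})$ and $\mathbb{Q}(\chi_2)=\mathbb{Q}(\sqrt{-d_2})$ both imaginary quadratic. Applying the pairwise criterion to $(\chi_1,\chi_2)$ forces $\mathbb{Q}(\sqrt{-d_1})=\mathbb{Q}(\sqrt{-d_2})=:\mathbb{Q}(\sqrt{-d})$. Then for an arbitrary non-rational $\psi_1\in\Irr(G_1)$, pairing $\psi_1$ with the fixed $\chi_2$ forces $\mathbb{Q}(\psi_1)=\mathbb{Q}(\sqrt{-d})$; symmetrically every non-rational character of $G_2$ also has field $\mathbb{Q}(\sqrt{-d})$. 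Hence every irreducible character of $G_1$ (and of $G_2$) has character field $\mathbb{Q}$ or $\mathbb{Q}(\sqrt{-d})$, which is exactly $\mathbb{Q}(G_1)=\mathbb{Q}(G_2)=\mathbb{Q}(\sqrt{-d})$ by the definition of $\mathbb{Q}(G_i)$ as the compositum of all $\mathbb{Q}(\chi)$. Conversely, if (i) holds then one factor contributes only $\mathbb{Q}$, so every compositum is $\mathbb{Q}$ or imaginary quadratic; and if (ii) holds every compositum lies inside $\mathbb{Q}(\sqrt{-d})$, again admissible. In both directions one invokes \ref{E3}. I expect no serious obstacle here; the only point needing a little care is the degree-$4$ compositum step — checking that $\mathbb{Q}(\sqrt{-a},\sqrt{-b})$ with $\mathbb{Q}(\sqrt{-a})\neq\mathbb{Q}(\sqrt{-b})$ genuinely contains a real quadratic subfield and so cannot be imaginary quadratic — and the bookkeeping that translates the pairwise statement into the clean global alternatives (i)--(ii).
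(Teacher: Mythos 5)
Your proposal is correct and follows essentially the same route as the paper: both reduce to criterion \ref{E3}, use that the irreducible characters of $G_1\times G_2$ are the $\chi_1\times\chi_2$ with $\mathbb{Q}(\chi_1\times\chi_2)$ containing (indeed equal to) the compositum $\mathbb{Q}(\chi_1)\mathbb{Q}(\chi_2)$, and then force the two imaginary quadratic character fields to coincide, yielding alternatives (i) and (ii). Your treatment of the converse and of the degree-$4$ compositum is just a more explicit spelling-out of what the paper dispatches with ``Clearly, the converse follows.''
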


\begin{proof}
Suppose first that $G_1\times G_2$ is a \cut group. Then, it follows that both $G_1$ and $G_2$ are \cut. If none of $G_1$ or $G_2$ is rational, then there exist $\chi_1\in \Irr(G_1)$ and $\chi_2\in \Irr(G_2)$ such that $\mathbb{Q}(\chi_1)=\mathbb{Q}(\sqrt{-d_1})$ and $\mathbb{Q}(\chi_2)=\mathbb{Q}(\sqrt{-d_2})$ for some $d_1,d_2\in \mathbb{N}.$ Since $\chi_1\times \chi_2\in \Irr(G_1\times G_2)$, we have that $\mathbb{Q}(\sqrt{-d_1},\sqrt{-d_2})\subseteq \mathbb{Q}(\chi_1\times \chi_2)$, but $G_1\times G_2$ being a \cut group implies that $\mathbb{Q}(\chi_1\times \chi_2)$ must be $\mathbb{Q}(\sqrt{-d})$ for some $d\in \mathbb{N}.$ Therefore, we must have $d_1=d_2=d$. Further, if $\chi_1'$ is any character of $G_1$ such that $\mathbb{Q}(\chi_1')=\mathbb{Q}(\sqrt{-{d_1}'})$, ${d_1}'\in \mathbb{N}$, then ${d_1}'=d.$ Thus, $\mathbb{Q}G_1=\mathbb{Q}(\sqrt{-d})$, and similarly $\mathbb{Q}G_2=\mathbb{Q}(\sqrt{-d})$. Consequently, $\mathbb{Q}(G_1)=\mathbb{Q}(G_2)=\mathbb{Q}(\sqrt{-d})=\mathbb{Q}(G_1 \times G_2).$ Clearly, the converse follows.
\end{proof}
\newpage
Inductively, we obtain the following corollary from \Cref{cut_direct}. 
\begin{corollary} The direct product $G_1\times G_2\times \ldots \times G_n$ of finite \cut groups $G_k$, where ${k\in \{1,2,...,n\}}$ is \cut if and only if for all non-rational groups $G_i$, $i\in I \subseteq \{1,2,...,n\}$, we have that
  $\mathbb{Q}(G_i)=\mathbb{Q}(\sqrt{-d})$, for a fixed natural number $d$.
\end{corollary}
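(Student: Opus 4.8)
The plan is to induct on $n$, using \Cref{cut_direct} as the base and inductive step, after first recasting the statement so that the $2$-group case is genuinely transparent. First I would dispose of the degenerate situations: if at most one of the $G_k$ is non-rational (i.e. $|I|\le 1$), then the product $G_1\times\cdots\times G_n$ is a direct product of a rational group (the product of the rational factors, which is rational since rationality is preserved under direct products: $\mathbb{Q}(\chi_{i_1}\times\cdots\times\chi_{i_r})=\mathbb{Q}$ when each $\mathbb{Q}(\chi_{i_j})=\mathbb{Q}$) with one \cut group, and this is handled directly by \Cref{cut_direct}(i). So I may assume $|I|\ge 2$, and the condition to prove equivalent is exactly: $\mathbb{Q}(G_i)$ is one and the same imaginary quadratic field $\mathbb{Q}(\sqrt{-d})$ for all $i\in I$.

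The forward direction: suppose $H:=G_1\times\cdots\times G_n$ is \cut. Every $G_k$ is a homomorphic image of $H$, hence \cut by the Remark following \ref{E3}. Fix $i,j\in I$ distinct. Grouping the remaining factors, $H$ contains $G_i\times G_j$ as a direct factor, hence $G_i\times G_j$ is \cut (again as a quotient of $H$). Since $G_i$ and $G_j$ are both non-rational, \Cref{cut_direct} forces $\mathbb{Q}(G_i)=\mathbb{Q}(G_j)=\mathbb{Q}(\sqrt{-d_{ij}})$ for some $d_{ij}\in\mathbb{N}$. Running this over all pairs in $I$ shows all the $\mathbb{Q}(G_i)$, $i\in I$, coincide with a single $\mathbb{Q}(\sqrt{-d})$ — here one just needs that if $\mathbb{Q}(G_i)=\mathbb{Q}(\sqrt{-d_{ij}})$ and also $\mathbb{Q}(G_i)=\mathbb{Q}(\sqrt{-d_{ik}})$ then $\mathbb{Q}(\sqrt{-d_{ij}})=\mathbb{Q}(\sqrt{-d_{ik}})$, which is immediate.

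The converse: assume $\mathbb{Q}(G_i)=\mathbb{Q}(\sqrt{-d})$ for all $i\in I$ and that each $G_k$ is \cut. An arbitrary irreducible character of $H$ is $\chi=\chi_1\times\cdots\times\chi_n$ with $\chi_k\in\Irr(G_k)$, and $\mathbb{Q}(\chi)$ is the compositum $\mathbb{Q}(\chi_1)\cdots\mathbb{Q}(\chi_n)$. Each $\mathbb{Q}(\chi_k)$ is either $\mathbb{Q}$ or imaginary quadratic (as $G_k$ is \cut), and whenever it is imaginary quadratic it is contained in $\mathbb{Q}(G_k)$; if $k\notin I$ then $G_k$ is rational and $\mathbb{Q}(\chi_k)=\mathbb{Q}$, while if $k\in I$ then $\mathbb{Q}(\chi_k)\subseteq\mathbb{Q}(\sqrt{-d})$. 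Hence the compositum $\mathbb{Q}(\chi)$ is contained in $\mathbb{Q}(\sqrt{-d})$, so it is $\mathbb{Q}$ or $\mathbb{Q}(\sqrt{-d})$, and by \ref{E3} the product $H$ is \cut. (One may alternatively phrase this as a clean induction on $n$ invoking \Cref{cut_direct} at each step, regrouping as $(G_1\times\cdots\times G_{n-1})\times G_n$ and checking that $\mathbb{Q}(G_1\times\cdots\times G_{n-1})=\mathbb{Q}(\sqrt{-d})$ when $I\cap\{1,\dots,n-1\}\ne\emptyset$, and is $\mathbb{Q}$ otherwise; the two cases match the two alternatives of \Cref{cut_direct}.)

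The only mildly delicate point — the part I would be most careful about — is the bookkeeping when some factors are rational: one must make sure that "$\mathbb{Q}(G_i)=\mathbb{Q}(\sqrt{-d})$ for all $i\in I$" with $d$ a genuine natural number (so $\mathbb{Q}(\sqrt{-d})$ really is imaginary quadratic, not accidentally $\mathbb{Q}$) is the exact content, and that the statement is vacuously the \cut-ness of a rational-times-\cut product when $I=\emptyset$ or $|I|=1$. There is no serious obstacle: the whole argument is a finite reduction to the two-factor case already proved, together with the elementary facts that rationality and \cut-ness pass to direct factors and that character fields of external tensor products are composita.
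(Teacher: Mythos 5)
Your core argument is correct and is exactly the route the paper intends: the paper offers no written proof beyond the remark that the corollary follows ``inductively'' from \Cref{cut_direct}, and your reduction to the two-factor theorem — pairwise projections $G_i\times G_j$ (cut as quotients) forcing a common imaginary quadratic field for all $i,j\in I$ in the forward direction, and the compositum/containment argument $\mathbb{Q}(\chi_1\times\cdots\times\chi_n)\subseteq\mathbb{Q}(\sqrt{-d})$ together with \ref{E3} (or the regrouped induction) for the converse — is precisely that argument spelled out.

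The one point you flag and then dismiss, however, is not vacuous. When $|I|=1$ the ``only if'' direction of the corollary as literally stated actually fails, so no bookkeeping can rescue it: take $G_1=C_2$ and $G_2=C_7\rtimes C_3$, the Frobenius group of order $21$. Then $G_2$ is a non-rational \cut group with $\mathbb{Q}(G_2)=\mathbb{Q}(\sqrt{-3},\sqrt{-7})$, a degree-$4$ field which is not of the form $\mathbb{Q}(\sqrt{-d})$, yet $G_1\times G_2$ is \cut by \Cref{cut_direct}(i) since $C_2$ is rational. So the displayed condition is genuinely stronger than \cut-ness of the product once $|I|\le 1$; the corollary is only correct if read as imposing the common-field condition when at least two factors are non-rational (which is what both your pairwise argument and the theorem's alternative (i) actually deliver). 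In short: your proof is sound for $|I|\ge 2$ and your converse is sound for all $I$, but your assertion that the $|I|\le 1$ case presents ``no serious obstacle'' is wrong as stated — it is exactly the place where the literal statement (and hence any purported proof of it) breaks down, and the honest fix is to note that the equivalence should be phrased so that a single non-rational factor, whatever the degree of its field $\mathbb{Q}(G_i)$, is allowed.
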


In particular, if $G$ is \cut, then so is $G\times C_2^n$, $n\in \mathbb{N}$.
\begin{remark}It may be noted that the \cut property is direct product closed for groups of odd order (\cite{MP18}, Corollary 5), in particular for $3$-groups. Though the \cut property is not direct product closed for $2$-groups (\cite{BMP17},  Remark 1),  it is direct product closed for $2$-groups of class at most $2$ (\cite{Mah18}, Corollary 3). 
\end{remark}

\subsection{Rational conjugacy classes:}
In a group $G$, a conjugacy class $C_x$ is called rational, if $x$ is a rational element in $G$, i.e., $\mathbb{Q}(x)=\mathbb{Q}$ and a character $\chi$ of $G$ is called rational, if $\mathbb{Q}(\chi)=\mathbb{Q}.$ 
In general, the number of rational conjugacy classes and rational irreducible characters of a group $G$ are not the same. These number coincide if the group has all Sylow subgroups abelian \cite{Bro71} or if 
the Sylow $2$-subgroups are cyclic \cite{NT10}. The numbers are also same when the group has at most two rational irreducible characters \cite{NT08}. Similar work can also be found in \cite{Sou25}. It has been proved that for \cut groups also, these numbers are same.
\begin{theorem}(\cite{BCJM21}, Proposition 3.2) For a finite \cut group $G$, the number of rational irreducible characters of $G$ is equal to the number of rational conjugacy classes of $G$.
\end{theorem}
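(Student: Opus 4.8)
The plan is to reduce the statement to the classical identity, valid for \emph{every} finite group, that the number of real-valued irreducible characters equals the number of real conjugacy classes (an instance of Brauer's permutation lemma, applied to the order-two maps $\chi \mapsto \overline{\chi}$ on $\Irr(G)$ and $C_x \mapsto C_{x^{-1}}$ on the set of conjugacy classes), and then to observe that for a \cut group the adjectives ``rational'' and ``real'' describe exactly the same objects --- separately on irreducible characters and on conjugacy classes.

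On the character side, let $\chi \in \Irr(G)$. By \ref{E3} we have $\mathbb{Q}(\chi) = \mathbb{Q}(\sqrt{-d})$ for some $d = d(\chi) \in \mathbb{Z}_{\geq 0}$. If $d > 0$ this field is imaginary quadratic, hence not contained in $\mathbb{R}$, so $\chi$ is not real-valued; if $d = 0$ then $\mathbb{Q}(\chi) = \mathbb{Q}$, so $\chi$ is rational, and in particular real-valued. Thus, for a \cut group, $\chi$ is rational if and only if it is real-valued, and the number of rational irreducible characters of $G$ equals the number of real-valued irreducible characters.

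On the class side, let $x \in G$. If $x$ is rational, then $x^{-1}$, being a generator of $\langle x \rangle$, lies in $C_x$, so $x \sim x^{-1}$ and the class $C_x$ is real. Conversely, suppose $x \sim x^{-1}$. For any $j \in \mathbb{N}$ with $\gcd(j,|x|) = 1$, criterion \ref{E4} gives $x^j \sim x$ or $x^j \sim x^{-1}$; since $x^{-1} \sim x$, in either case $x^j \sim x$, so every generator of $\langle x \rangle$ is conjugate to $x$, i.e.\ $x$ is rational. Hence, for a \cut group, a conjugacy class is rational precisely when it is real, and the number of rational conjugacy classes of $G$ equals the number of real conjugacy classes.

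Chaining the three facts gives $(\#\ \text{rational conjugacy classes}) = (\#\ \text{real conjugacy classes}) = (\#\ \text{real-valued irreducible characters}) = (\#\ \text{rational irreducible characters})$, which is the assertion. No genuine computation is involved; the only place the \cut hypothesis enters is the ``rational $\Leftrightarrow$ real'' coincidence in the two preceding paragraphs, so the real obstacle here is conceptual rather than technical --- one has to notice that this coincidence, combined with the well-known real-class/real-character equality, already settles the matter. (Alternatively, for a \cut group one may invoke the equivalence between equality of the numbers of $\mathbb{R}$- and $\mathbb{Q}$-classes and equality of the numbers of real and rational irreducible characters, and then feed in the class-side observation above; but routing through Brauer's permutation lemma keeps the argument self-contained.)
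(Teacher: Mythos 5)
Your argument is correct: Brauer's permutation lemma (applied to complex conjugation) gives that the number of real-valued irreducible characters equals the number of real conjugacy classes in any finite group, and for a \cut group you correctly verify, using \ref{E3} on the character side and \ref{E4} (together with the fact that $x^{-1}$ generates $\langle x\rangle$) on the class side, that ``rational'' and ``real'' coincide for both characters and classes; chaining these three equalities settles the statement. The paper itself does not reprove this result --- it simply cites \cite{BCJM21}, Proposition 3.2 --- and the proof there is of a different flavour: it is an orbit-counting argument with respect to the Galois action on $\Irr(G)$ and on conjugacy classes, exploiting that all character fields and class fields are at most quadratic, so every Galois orbit has size $1$ or $2$; since the total numbers of characters and classes agree and the numbers of orbits agree (again by Brauer), the numbers of size-one orbits, i.e.\ of rational characters and rational classes, must agree. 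That route is what allows the paper, in Section 4.4, to transfer the result verbatim to quadratic semi-rational groups by replacing $\mathbb{Q}(\sqrt{-d})$ with $\mathbb{Q}(\sqrt{d})$, $d$ squarefree. Your ``rational $\Leftrightarrow$ real'' shortcut buys a shorter, more elementary proof, but it is genuinely tied to the \emph{imaginary} quadratic hypothesis (a real quadratic class field would make a real class non-rational), so it does not extend to the QSR setting; it is worth being aware of both arguments for that reason.
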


It has been proved in \cite{Nav09}, that in a group of odd order, the number of irreducible quadratic characters is same as that of quadratic conjugacy classes, where an irreducible character $\chi$ is called quadratic if $[\mathbb{Q}(\chi): \mathbb{Q}]=2$ and a conjugacy class $C_x$ is called quadratic if $[\mathbb{Q}(x):\mathbb{Q}]=2$. In view of above theorem, this result also holds good for \cut groups.

\begin{corollary}
	In a finite group $G$, the number of irreducible quadratic characters of $G$ is equal to the number of quadratic conjugacy classes in $G$.
\end{corollary}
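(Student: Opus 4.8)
The plan is to establish the equality in the two settings that the preceding discussion has in view, namely when $G$ has odd order and when $G$ is a \cut group. For a group of odd order the assertion is exactly Navarro's theorem \cite{Nav09}, so I would say nothing further there and concentrate on the \cut case, where the statement is a short consequence of the theorem above.

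So suppose $G$ is a \cut group. The first step is the remark that, for such a group, ``quadratic'' and ``non-rational'' cut out the same objects on both sides. By \ref{E3}, each field of character values $\mathbb{Q}(\chi)$, $\chi \in \Irr(G)$, is $\mathbb{Q}$ or an imaginary quadratic field, so $[\mathbb{Q}(\chi):\mathbb{Q}] \le 2$ and $\chi$ is quadratic precisely when $\mathbb{Q}(\chi) \ne \mathbb{Q}$; dually, by \ref{E8} each class field $\mathbb{Q}(x)$ is $\mathbb{Q}$ or imaginary quadratic, so $C_x$ is a quadratic class precisely when $\mathbb{Q}(x) \ne \mathbb{Q}$. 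Hence the quadratic irreducible characters of $G$ are exactly the non-rational ones, and likewise for the conjugacy classes.

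The second step is the count, and it is immediate. The number of irreducible characters of $G$ equals the number of its conjugacy classes, which holds for any finite group; and for a \cut group, the theorem above (\cite{BCJM21}, Proposition 3.2) gives that the number of rational irreducible characters equals the number of rational conjugacy classes. Subtracting the latter equality from the former, the number of non-rational irreducible characters equals the number of non-rational conjugacy classes, and by the first step this is precisely the equality between the number of quadratic irreducible characters and the number of quadratic conjugacy classes.

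The only piece with genuine content is the invoked equality of the rational counts for a \cut group; the rest is formal. The step deserving care is the passage from ``quadratic'' to ``non-rational'': it uses the \cut hypothesis essentially, through the bound on the degrees of $\mathbb{Q}(\chi)$ and $\mathbb{Q}(x)$ recorded in \ref{E3} and \ref{E8}, and this is what makes the subtraction legitimate. Beyond being attentive to this point, I do not anticipate any real difficulty.
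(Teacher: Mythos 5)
Your argument is correct and is exactly the deduction the paper intends: the statement (which should read ``finite \emph{\cut} group'', as the paper's own Examples \ref{G1} and \ref{G2} show it fails for general finite groups) follows from the equality of rational counts in (\cite{BCJM21}, Proposition 3.2) together with the observation via \ref{E3} and \ref{E8} that in a \cut group every non-rational character and class is quadratic, so subtracting from the common total of characters and classes gives the result. The paper gives no further detail beyond ``in view of the above theorem,'' so your write-up matches its proof.
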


We state examples from \cite{BCJM21}, (see also \cite{Ten12}, Section 6) of groups for which the number of rational irreducible characters is not equal to the number of rational conjugacy classes. Also, for each of these groups, the number of irreducible quadratic characters is not same as that of its quadratic conjugacy classes.\\

\begin{example}\label{G1}
	Consider $G_1= \langle a, b, c \;|\; a^2=b^2=c^8=1, b^c=b, b^a=bc^4, c^a=c^3\rangle$, which is not a \cut group but a semi-rational group. In $G_1$, there are $10$ irreducible rational characters but $8$ rational conjugacy classes. Also, there are $6$ quadratic conjugacy classes but no quadratic irreducible characters in $G_1$.
\end{example}

\begin{example}\label{G2}
	Similarly, if $G_2=\langle a, b, c  \;|\; a^2=b^2=c^8=1, b^c=b, b^a=b, c^a=bc^3 \rangle$, then $G_2$ has $6$ irreducible rational characters but $8$ rational conjugacy classes. Further, $G_2$ has $2$ quadratic conjugacy classes but $8$ quadratic irreducible characters. 
\end{example}

Note that the groups $G_1$ and $G_2$ are groups of least order for which the number of  irreducible rational characters is different from that of rational conjugacy classes. However, the number of quadratic conjugacy classes and quadratic irreducible characters is different even in $C_{24}$.

 \subsection{Prime spectrum of \cut groups:}

 We observe that the property of $G$ being  \cut has strong bearing on the primes dividing the order of $G$. Let the prime spectrum of the group $G$, i.e., the set of primes dividing $|G|$ be denoted by $\pi(G)$. 
 
 Firstly, if $G$ is a \cut group then, we must have either $2\in \pi(G)$ or $3\in \pi(G)$. For, if $2\not \in \pi(G)$, then $G$ being odd order group is solvable. Considering the abelian quotient group $G/G'$, where $G'$ is the commutator subgroup of $G$, we see that exponent of $G/G'$ must be $3$, implying that $3\in \pi(G)$. If $G$ has the \cut-property, then so does $G/\mathcal{Z}(G)$. Now, if $G$ is nilpotent group, then each consecutive quotient in the upper central series of $G$, being abelian \cut group, is of exponent $1, 2, 3, 4$ or $6$ and hence $\pi{(G)}\subseteq\{2,3\}$. Such a constraint is also observed for odd order groups, and even for solvable groups. We put together the results on $\pi(G)$ from \cite{BMP17}, \cite{Mah18} and \cite{Bac18}.
 
 \begin{theorem}\label{prime_spectrum_cut} If $G$ is a \cut group, then
 	\begin{enumerate}
 		\item[\rm{(i)}] Either $2\in \pi(G)$ or $3\in \pi(G)$.
 		\item[\rm{(ii)}] If $G$ is nilpotent, then $\pi{(G)}\subseteq\{2,3\}$. 
 		\item[\rm{(iii)}] If $G$ is odd order group, then $\pi{(G)}\subseteq\{3,7\}$.
 		\item[\rm{(iv)}] If $G$ is solvable, then $\pi{(G)}\subseteq\{2,3, 5,7\}$.
 		
 	\end{enumerate}  
 \end{theorem}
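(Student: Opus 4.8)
The plan is to establish the four items in turn: (i) and (ii) are essentially already carried out in the discussion preceding the statement and I would only make them precise; (iii) reduces immediately to the classification of orders of elements of odd-order \cut groups; and (iv) is the genuine content, to be obtained by adapting R.~Gow's theorem that a solvable rational group has order divisible only by $2,3,5$.

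For (i), suppose $2\notin\pi(G)$; then $G$ has odd order and hence is solvable, so for $G\neq 1$ the commutator subgroup is proper and $G/G'$ is a nontrivial abelian homomorphic image of $G$. By the remark following \ref{E3} it is again \cut, so by Remark~\ref{abelian} its exponent lies in $\{1,2,3,4,6\}$; being odd and $>1$ it must equal $3$, whence $3\in\pi(G)$. For (ii), a nilpotent $G$ is the internal direct product of its Sylow subgroups, so each Sylow $p$-subgroup $P$ is a homomorphic image of $G$ and hence \cut; if $P\neq 1$ then $\mathcal{Z}(P)$ is a nontrivial abelian group which is \cut (the centre of a \cut group is \cut), so by Remark~\ref{abelian} its exponent lies in $\{1,2,3,4,6\}$, which for a $p$-group forces $p\in\{2,3\}$; thus $\pi(G)\subseteq\{2,3\}$. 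For (iii), let $G$ be an odd-order \cut group and $p\in\pi(G)$; by Cauchy's theorem $G$ has an element of order $p$, and by Remark~\ref{PrimitiveRoots} (\cite{Mah18}, Theorem~1) every element of such a $G$ has order $7$ or a power of $3$, so $p\in\{3,7\}$ and $\pi(G)\subseteq\{3,7\}$. (One also sees directly from \ref{E7} that an element of order $n\geq 3$ in an odd-order \cut group forces the $2$-part of $\varphi(n)$ to be exactly $2$, hence $n=p^{a}$ with $p\equiv 3\pmod 4$; the sharper bound $p\in\{3,7\}$ is the substance of \cite{Mah18}.)

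For (iv) I would argue by induction on $|G|$, following \cite{Bac18}. Let $N$ be a minimal normal subgroup of the solvable \cut group $G$; then $N$ is an elementary abelian $q$-group, $G/N$ is a solvable \cut group of smaller order, and by induction $\pi(G/N)\subseteq\{2,3,5,7\}$, so it suffices to show $q\in\{2,3,5,7\}$. Every nonzero $n\in N$ has order $q$ and is inverse semi-rational, so by \ref{E7} the quotient $B_G(n)=N_G(\langle n\rangle)/C_G(\langle n\rangle)$ has index at most $2$ in $\Aut(\langle n\rangle)\cong C_{q-1}$; since $N\subseteq C_G(\langle n\rangle)$, the group $B_G(n)$ is a subquotient of $G/N$, so every prime divisor of $q-1$ lies in $\{2,3,5,7\}$. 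It then remains to combine this with the fact that $\overline{G}=G/C_G(N)$ is a $\{2,3,5,7\}$-group acting faithfully and irreducibly on $N\cong\mathbb{F}_q^{m}$ in such a way that the stabiliser of each line surjects onto an index-$\le 2$ subgroup of $\mathbb{F}_q^{\times}$, and to deduce from this that $q\le 7$. This last step is the main obstacle: the smoothness of $q-1$ alone still admits primes such as $11,13,29,31,43,\dots$, so one must genuinely exploit the faithful irreducible action — essentially a solvable-linear-group analysis in the spirit of Gow's original argument — to exclude all $q\ge 11$, after which $\pi(G)\subseteq\{2,3,5,7\}$ follows.
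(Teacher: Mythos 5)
Your items (i)--(iii) are correct and run along exactly the lines the paper itself indicates: (i) via the nontrivial abelian quotient $G/G'$ of an odd-order (hence solvable) \cut group having exponent $3$; (ii) via Sylow factors (the paper uses upper central quotients, but the content is the same); (iii) by quoting \cite{Mah18} that every element of an odd-order \cut group has order $7$ or a power of $3$, plus Cauchy. No complaints there, and indeed the paper presents (iii) and (iv) as compiled results rather than reproving them.

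The genuine gap is in (iv), and you have named it yourself: your minimal-normal-subgroup induction only yields that for the prime $q$ with $N$ elementary abelian, the odd part of $q-1$ is (up to a factor $2$) a $\{2,3,5,7\}$-number, which still admits $q=11,13,29,31,43,\dots$, and the step that is supposed to exclude all $q\geq 11$ --- ``a solvable-linear-group analysis in the spirit of Gow's original argument'' exploiting that every line stabiliser in the faithful irreducible action of $G/C_G(N)$ on $N\cong\mathbb{F}_q^{m}$ surjects onto an index-$\le 2$ subgroup of $\mathbb{F}_q^{\times}$ --- is asserted but not carried out. That step is precisely the substance of the theorem, and it is far from routine: Gow's argument for rational groups \cite{Gow76} does not transfer verbatim, and the published proof does not proceed this way at all. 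In \cite{Bac18} (the source the paper cites for (iv)) one first observes that a \cut group is quadratic rational, invokes Tent's theorem \cite{Ten12} that a solvable quadratic rational group has $\pi(G)\subseteq\{2,3,5,7,13\}$, and then eliminates $13$ by showing that the configurations forcing $13\in\pi(G)$ produce character fields that are real quadratic, contradicting the imaginary-quadratic constraint \ref{E3} for \cut groups. So as it stands your proposal proves (i)--(iii) but only reduces (iv) to an unproved claim; either supply the linear-group analysis in full or replace it by the Tent-plus-elimination-of-$13$ route.
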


If a $p$-group is \cut then, $p=2$ or $3$ and it also follows that the quotients in the upper and lower central series have exponent at most $4$ \cite{BMP17}. Consequently, for a finite nilpotent \cut group $G= G_2 \times G_3$, where $G_2$ and $G_3$ are  respectively the Sylow $2$ and Sylow $3$ subgroups of $G$, we obtain that the exponents of the quotients in the upper and lower central series of	$G$ are at most $12$. \\

As remarked in \cite{BCJM21}, every $\{2,3\}$-group can be embedded in a rational $\{2,3\}$-group and every nilpotent $\{2,3\}$-group can be embedded in a nilpotent $\{2,3\}$-\cut (but not rational) group. Also, because Sylow $5$-subgroup of a solvable rational group is elementary abelian, not every $\{2,3,5\}$-group can be embedded in a solvable rational group. However, it has been proved that each $5$-group as well as a $7$-group can be embedded into a solvable \cut group. This naturally raises the following question.

\begin{question}\label{embedding_cut}
Can every solvable $\{2,3,5,7\}$-group be embedded into a solvable cut group?
\end{question}

Clearly, the answer is positive for nilpotent $\{2,3,5\}$-groups or nilpotent $\{2,3,7\}$-groups, but one may look into weaker versions of this question as well.\\

We now see how \cut property classifies unit groups of integral group rings as per their central heights.
\subsection{The upper central series of $\U(\mathbb{Z}G)$:}

For a group $G$, let $\U=\U(\mathbb{Z}G)$ and let  $$⟨1⟩ = \mathcal{Z}_0(\U)\subseteq \mathcal{Z}_1(\U) \subseteq...\subseteq\mathcal{Z}_n(\U)\subseteq\mathcal{Z}_{n+1}(\U)... $$ 
be the upper central series of $\U$. The central height of $\U$ is the smallest integer $n\geq 0$ such that $\mathcal{Z}_n(\U)=\mathcal{Z}_{n+1}(\U)$. It turns out that the upper central series of $\U$ is  completely determined by $\mathcal\U$ and group structure of $G$. It is not wrong to say that the \cut property helps to classify the groups $G$ according to the central heights of $\U(\mathbb{Z}G)$. To see this, note that the central height of $\U$ is known to be at most $2$ and equals $2$ if, and only if, $G$ is a $Q^{*}$-group (\cite{AHP93},\cite{AP93}). By a $Q^{*}$-group, we mean a group $G$ which has an element $a$ of order $4$ and an abelian subgroup $H$ of index $2$, which is not an elementary abelian $2$-group, and is such that $G = \langle H, a\rangle, a^{-1}ha=h^{-1}$, for all $h\in H$ and $a^2 =b^2$, for some $b\in H$. If $G$ is a $Q^{*}$-group, i.e., if the central height of $\U$ is $2$, then the second centre is known in terms of the centre of $\U$. More precisely, $\mathcal{Z}_2(\U)=T\mathcal{Z}_1(\U)$, where $T=\langle b\rangle \oplus E_2$, $E_2$ being an elementary abelian $2$-group. Therefore, if $G$ is not a $Q^{*}$-group, then the central height must be $0$ or $1$. Clearly, a group has central height $0$, essentially translates to saying that $G$ is a \cut group with trivial centre. In all other cases, the central height of $\U$ equals $1$. Hence, the understanding of the upper central series of $\U$ comes down to the study of $\mathcal{Z}(\U)$, the centre of $\U$.\\

In this direction, we have the following result for odd order groups.
\begin{proposition}
Let $G$ be an odd-order group. The central height of $\U(\mathbb{Z}G)$ is $1$, except when $G$ is a \cut group containing an element of order $7$, and in this case, the central height of $\U(\mathbb{Z}G)$ is $0$.
\end{proposition}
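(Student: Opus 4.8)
The plan is to derive the statement from the trichotomy for the central height of $\U:=\U(\mathbb{Z}G)$ recalled above --- that this height is $2$ exactly for $Q^{*}$-groups, is $0$ exactly when $G$ is a \cut group with $\mathcal{Z}(G)=1$, and is $1$ in every remaining case. First I would observe that an odd-order group is never a $Q^{*}$-group, since such a group contains, by definition, an element of order $4$; hence for $G$ of odd order the central height of $\U$ is always $0$ or $1$, and it is $0$ precisely when $G$ is \cut with trivial centre. Thus the proposition is equivalent to the following assertion, which is what I would actually prove: \emph{an odd-order \cut group $G$ satisfies $\mathcal{Z}(G)=1$ if and only if it contains an element of order $7$} (equivalently, by Cauchy's theorem, precisely when $7\mid|G|$).

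For the direction ``$G$ has no element of order $7$ $\Rightarrow$ $\mathcal{Z}(G)\neq 1$'' I would argue by contraposition on a group-order statement: if $7\nmid|G|$, then Theorem~\ref{prime_spectrum_cut}(iii) forces $\pi(G)\subseteq\{3\}$, so $G$ is a $3$-group and (assuming $G\neq 1$) has nontrivial centre. For the converse, I would first record that the centre of a \cut group is again \cut, so $\mathcal{Z}(G)$ is an abelian \cut group of odd order and hence, by Remark~\ref{abelian}, has exponent $1$ or $3$. Now suppose $x\in G$ has order $7$ while $\mathcal{Z}(G)\neq 1$, and pick $z\in\mathcal{Z}(G)$ of order $3$. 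Since $x$ and $z$ commute and have coprime orders, $xz$ has order $21$; but by Remark~\ref{PrimitiveRoots} every element of an odd-order \cut group has order $7$ or a power of $3$, and $21$ is neither --- a contradiction. Hence $\mathcal{Z}(G)=1$.

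Assembling the cases: if $G$ is odd-order, \cut, and contains an element of order $7$, then $\mathcal{Z}(G)=1$ and so the central height of $\U$ is $0$; in every other odd-order case, $G$ is not a \cut group with trivial centre --- it either fails to be \cut, or is \cut without an element of order $7$ and is therefore a nontrivial $3$-group with $\mathcal{Z}(G)\neq 1$ --- and, $G$ being no $Q^{*}$-group, the central height is $1$. I do not expect a real obstacle: the only step that is not pure bookkeeping is the coprime-order computation yielding an element of order $21$, and the one point requiring minor care is the trivial group (together with checking that the $Q^{*}$/central-height trichotomy is applied in a form valid for all odd-order $G$).
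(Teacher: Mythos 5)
Your proposal is correct and follows exactly the route the paper intends: the paper states the proposition without proof as an immediate consequence of the preceding $Q^{*}$-group/central-height trichotomy together with Theorem~\ref{prime_spectrum_cut}(iii), Remark~\ref{abelian} and the classification of odd-order \cut groups in Remark~\ref{PrimitiveRoots}, which is precisely what you assemble (including the order-$21$ contradiction showing the centre is trivial when an element of order $7$ exists). The only residual fuzziness, concerning the trivial group and the convention that ``central height $0$'' means $G$ is \cut with trivial centre (so that $\mathcal{Z}(\U)=\pm1$), is inherited from the paper itself and is not a gap in your argument.
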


We next compile the results based on more developments about \cut groups.

\subsection{Composition factors of \cut groups:}It has been of interest to determine the simple groups which may appear as composition factors of rational groups. For solvable rational groups, Gow \cite{Gow76} showed that the possible composition factors are cyclic groups of order at most $5$ and for non-solvable rational groups, Feit and Seitz \cite{FS88} proved that only finitely many groups of Lie type and alternating groups may occur as non-abelian composition factors. In this direction, Trefethen provided a complete list of non-abelian composition factors of finite \cut-groups.

\begin{theorem}\label{CompositionFactors}(\cite{Tre19}, Theorem 1.1) Let $G$ be a finite \cut group. The  non-abelian simple groups which can occur as a composition factor of $G$ are precisely amongst the following:
		\begin{enumerate}
		\item[\rm{(i)}] Alternating groups: $A_n$, $n \geq 5$.
		\item[\rm{(ii)}] Sporadic groups:	$M_{11}, M_{12}, M_{22}, M_{23}, M_{24}, \text{Co}_1, \text{Co}_2, \text{Co}_3, \text{HS}, \text{McL}, \text{Th}, M.$
		\item[\rm{(iii)}] Groups of Lie type: $L_2(7), L_2(8), L_3(4), U_3(3), U_3(4), U_3(5), U_3(8), U_4(2), U_4(3),
		U_5(2),\\ U_6(2), S_6(2), O_8^+(2), G_2(4), {}^2F_4(2)', {}^3D_4(2) $.
	\end{enumerate}
\end{theorem}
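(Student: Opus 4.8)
The plan is to reduce the statement to a finite computation using two standard facts about cut groups established earlier in the excerpt: first, that homomorphic images of cut groups are cut (Remark after \ref{E3}), and second, the character-theoretic criterion \ref{E3}, namely that $\mathbb{Q}(\chi)$ is $\mathbb{Q}$ or imaginary quadratic for every $\chi \in \Irr(G)$. The key observation is that being a composition factor is not the same as being a quotient, so one cannot directly apply the quotient-closure of the cut property; instead one works with the notion relative to a subgroup. The standard tool here is the following: if $S$ is a non-abelian composition factor of a finite group $G$, then $G$ has subgroups $K \slunlhd H \leq G$ with $H/K \cong S$, and one shows that the cut condition on $G$ forces a weakened "cut-type" condition on $S$ that is nevertheless strong enough to be checked against the classification of finite simple groups (CFSG).

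The main steps I would carry out are as follows. First, I would pin down the precise hereditary property: if $G$ is cut and $S = H/K$ is a section of $G$ with $S$ simple, then for every $\chi \in \Irr(S)$, inflating $\chi$ to $H$ and using that character fields of irreducible constituents of inductions/restrictions are controlled, one deduces $\mathbb{Q}(\chi)$ is $\mathbb{Q}$ or imaginary quadratic. Concretely, one uses that for $\psi \in \Irr(H)$ lying over $\chi \in \Irr(S)$ (via inflation, so $\psi$ is just $\chi$ pulled back), $\mathbb{Q}(\psi) = \mathbb{Q}(\chi)$, and then compares with constituents of $\psi$ induced up to $G$; the field $\mathbb{Q}(\chi)$ embeds into a compositum controlled by $\mathbb{Q}(G)$-type data. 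This gives: every non-abelian composition factor $S$ of a cut group is itself a cut group (or at worst satisfies the character-field constraint \ref{E3}). Second, with this reduction in hand, the problem becomes: classify the non-abelian finite simple groups $S$ such that $\mathbb{Q}(\chi)$ is $\mathbb{Q}$ or imaginary quadratic for all $\chi \in \Irr(S)$. Third, I would invoke CFSG and run through the families: alternating groups $A_n$ (where rationality/near-rationality is classical, cf. Patay's work cited in the introduction), the 26 sporadic groups (a finite check against the ATLAS character tables — only those with the stated character-field property survive), and the groups of Lie type, where one uses known results on character fields (Feit–Seitz-type analysis for rational groups, extended to the quadratic-imaginary case) to cut the infinite families down to the finite list in (iii).

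The hard part will be the groups of Lie type: showing that only finitely many survive, and identifying exactly which ones. For rational simple groups this is the content of Feit–Seitz \cite{FS88}, and the cut case requires extending that analysis to allow imaginary quadratic character fields — one must control the Galois action on Deligne–Lusztig characters and on semisimple/unipotent characters, bounding the degree of $\mathbb{Q}(\chi)$ over $\mathbb{Q}$ by $2$, which forces strong restrictions on the underlying field size $q$ and the rank. I expect this step to consume the bulk of the work and to rely heavily on the representation theory of finite reductive groups; the alternating and sporadic cases, by contrast, reduce to bookkeeping. Once the three families are handled, assembling the final list and verifying that each listed group genuinely satisfies \ref{E3} (so that the list is tight, not merely an upper bound) completes the proof.
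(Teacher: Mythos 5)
Your central reduction is the claim that a non-abelian composition factor $S$ of a \cut group again satisfies the character-field constraint \ref{E3} (so is itself \cut, or nearly so). That claim is false, and the theorem you are trying to prove already refutes it: the list contains \emph{all} alternating groups $A_n$, $n\geq 5$, whereas $A_n$ is \cut only for $n\in\{1,2,3,4,7,8,9,12\}$ (as recalled later in the paper); for general $n$ the groups $A_n$ have real quadratic character fields such as $\mathbb{Q}(\sqrt{q})$, violating \ref{E3}. The paper also notes explicitly that the \cut property is not inherited even by normal subgroups, so the Clifford/inflation argument you sketch cannot give $\mathbb{Q}(\chi)\subseteq$ (something controlled by $\mathbb{Q}(G)$) for characters of a section $H/K$: character fields of subgroups of \cut groups are completely unconstrained (every group embeds in a rational symmetric group). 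If your reduction were correct, the resulting classification would be the much shorter list of simple \cut groups, not the list in the statement.

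What actually passes to a composition factor is a weaker, fusion-theoretic condition modulo automorphisms: taking a minimal normal subgroup $N\cong S\times\cdots\times S$ of a suitable section and using the group-theoretic criterion \ref{E4} (Feit--Seitz style), conjugation by $G$ induces only elements of $\Aut(S)$, so one gets that for every $x\in S$ and every $j$ coprime to $|x|$, $x^j$ is $\Aut(S)$-conjugate to $x$ or to $x^{-1}$ (``inverse semi-rational in $\Aut(S)$''). It is this condition, not \ref{E3} for $S$ itself, that is then run against CFSG: for alternating groups it holds for all $n$ because all elements are rational in $S_n$, and for sporadic and Lie-type groups the ATLAS/Deligne--Lusztig analysis prunes to the finite lists in (ii) and (iii). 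Tightness is likewise not shown by checking that each listed $S$ satisfies \ref{E3} (it usually does not), but by exhibiting for each $S$ a \cut group having $S$ as a composition factor, e.g.\ $S_n$ for $A_n$ or suitable extensions $S.2$. Note also that the paper itself offers no proof: the theorem is quoted from Trefethen \cite{Tre19}, so the burden you took on was to reproduce his argument, and the blueprint as written does not do so.
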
 

\subsection{Degrees of character fields in \cut groups:}
Following \Cref{CompositionFactors}, Moreto \cite{Mor22}, provided a bound for $\mathbb{Q}(G)$, when $G$ is a \cut group.

\begin{theorem}\label{degree bound}(\cite{Mor22}, Theorem A)
	Let $G$ be a \cut group and let $A_t$ be the largest alternating group that appears as a composition factor of $G$. Then 
	$$ [\mathbb{Q}(G) : \mathbb{Q}] \leq 2^{\max\{54,~ p_t + 1\}},$$
	where $p_t$ is the number of primes less than or equal to $t$.
\end{theorem}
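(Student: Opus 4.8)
The plan is to bound $[\mathbb{Q}(G):\mathbb{Q}]$ by controlling, prime by prime, how much of each cyclotomic field $\mathbb{Q}(\xi_{p^a})$ can sit inside $\mathbb{Q}(G)$, and then to exploit the fact that since $G$ is \cut every character field $\mathbb{Q}(\chi)$ is $\mathbb{Q}$ or imaginary quadratic (statement \ref{E3}), so the ramification of $\mathbb{Q}(G)$ at odd primes is extremely restricted. First I would recall that $\mathbb{Q}(G)\subseteq \mathbb{Q}(\xi_e)$, where $e=\exp(G)$, and that $\mathbb{Q}(G)$ is the compositum of all the $\mathbb{Q}(\chi)$; being a compositum of (at most) imaginary quadratic fields over $\mathbb{Q}$, its Galois group over $\mathbb{Q}$ is elementary abelian $2$-group-like in its $p$-parts except possibly at $p=2$. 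Concretely, for each odd prime $p$, the $p$-part of $\Gal(\mathbb{Q}(G)/\mathbb{Q})$ must have exponent dividing $2$, hence $[\mathbb{Q}(G)\cap \mathbb{Q}(\xi_{p^\infty}):\mathbb{Q}]$ divides $2$ (a subfield of $\mathbb{Q}(\xi_{p^a})$ of $2$-power degree is at most quadratic), while at $p=2$ one gets a slightly larger but still bounded $2$-group. This should reduce the problem to: how many distinct primes $p$ can divide $|G|$ in a way that actually contributes a quadratic subfield to $\mathbb{Q}(G)$?

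The key input here is Theorem \ref{CompositionFactors}: the non-abelian composition factors of a \cut group are confined to an explicit list, and the abelian composition factors can only be $C_2, C_3$ (and the primes appearing in the non-abelian factors). So the primes dividing $|G|$ that are "large" must come from the orders of the alternating composition factors — an $A_t$ composition factor forces all primes $\le t$ to divide $|G|$, and conversely the sporadic and Lie-type entries on the list involve only small primes. Thus I would argue that $\pi(G)$ is contained in $\{2,3,5,7,\ldots\}\cap\{\text{primes}\le \max(t, c)\}$ for an absolute constant $c$ coming from the finite list in Theorem \ref{CompositionFactors} (the sporadic/Lie-type part), and one checks that $c$ can be taken small enough that the relevant bound is governed by $53$, i.e. the exponent $54$. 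Each prime in $\pi(G)$ contributes at most a quadratic extension to $\mathbb{Q}(G)$ for $p$ odd; handling $p=2$ and $p=3$ separately (these can each contribute a bounded but not-just-quadratic amount, coming from $\mathbb{Q}(\sqrt{-1}),\mathbb{Q}(\sqrt{2}),\mathbb{Q}(\sqrt{-3})$ etc., which are still quadratic individually), one gets $[\mathbb{Q}(G):\mathbb{Q}]\le 2^{|\pi(G)|+O(1)}$, and then $|\pi(G)|\le \max\{p_t+1,\ c'\}$ finishes it, with the constant absorbed into the $54$.

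I expect the main obstacle to be twofold. First, the careful bookkeeping at the primes $2$ and $3$: unlike odd primes $\ge 5$, a \cut group can have elements of order $8$, $9$, $12$, etc., so $\mathbb{Q}(G)$ can pick up several independent quadratic subfields ramified only at $2$ and $3$ (e.g. $\mathbb{Q}(i)$, $\mathbb{Q}(\sqrt 2)$, $\mathbb{Q}(\sqrt{-3})$), and one must verify that the total $2$-and-$3$ contribution is a fixed power of $2$ and does not grow with $t$. Second, and more substantively, one must verify the claim that for odd $p\ge 5$ dividing $|G|$, the contribution to $\mathbb{Q}(G)$ from the $p$-part of the exponent is at most quadratic; this is where the \cut hypothesis is essential, and it should follow from \ref{E3} together with the observation that $\Gal(\mathbb{Q}(\chi)/\mathbb{Q})$ embeds in $(\mathbb{Z}/e\mathbb{Z})^\times/\{\pm 1\}$-type data so that a contribution of order $p$ (for $p$ odd) to some $\mathbb{Q}(\chi)$ would force a character field of degree divisible by $p>2$, contradicting \ref{E3}. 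Once these two points are pinned down, assembling the bound $[\mathbb{Q}(G):\mathbb{Q}]\le 2^{\max\{54, p_t+1\}}$ is a matter of collecting the per-prime estimates and invoking Theorem \ref{CompositionFactors} to control $\pi(G)$ in terms of $t$.
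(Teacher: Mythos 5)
First, a caveat: the survey you are commenting on does not prove this theorem at all — it is quoted verbatim from Moret\'o (\cite{Mor22}, Theorem A) — so your attempt can only be measured against Moret\'o's argument, not against a proof in this paper. Your opening reduction is sound and is indeed the standard skeleton: since $G$ is \cut, every $\mathbb{Q}(\chi)$ is $\mathbb{Q}$ or imaginary quadratic by \ref{E3}, so $\mathbb{Q}(G)$ is multiquadratic with $\Gal(\mathbb{Q}(G)/\mathbb{Q})$ elementary abelian of exponent $2$ (your hedging about $p$-parts is unnecessary), and inside $\mathbb{Q}(\xi_e)$ each odd prime dividing $e=\exp(G)$ can contribute only the unique quadratic subfield of $\mathbb{Q}(\xi_{p^a})$, while $p=2$ contributes at most $\mathbb{Q}(i,\sqrt{2})$. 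This gives $[\mathbb{Q}(G):\mathbb{Q}]\leq 2^{|\pi(G)|+1}$ correctly.

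The genuine gap is the step where you bound $|\pi(G)|$. You assert, as if it were contained in \Cref{CompositionFactors}, that ``the abelian composition factors can only be $C_2$, $C_3$ (and the primes appearing in the non-abelian factors).'' Trefethen's theorem classifies only the \emph{non-abelian} composition factors and says nothing about the abelian ones; moreover the assertion is false as stated, since already solvable \cut groups admit $C_5$ and $C_7$ as composition factors (cf.\ \Cref{prime_spectrum_cut}(iv) and the \cut Frobenius groups such as $C_7\rtimes C_3$). The real issue is whether a prime $p$ can divide $|G|$ only through abelian chief factors, with $p$ neither small nor dividing the order of any listed simple group, and ruling this out (or bounding such primes by an absolute constant) requires a separate argument — essentially the normalizer/element-order analysis forcing $G$ to contain inverse semi-rational elements of order divisible by $(p-1)/2$, pushed through the non-solvable setting. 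This is precisely where the constant $54$ comes from in Moret\'o's proof: your own accounting exposes the mismatch, because if $\pi(G)$ consisted only of primes at most $t$ together with the primes dividing the sporadic and Lie-type groups in the list (the largest being $71$, about $20$ primes in all), the resulting bound would be roughly $2^{\max\{21,\,p_t+1\}}$, much sharper than the stated $2^{\max\{54,\,p_t+1\}}$. So the per-prime field-theoretic bookkeeping you flag as the main obstacle is in fact the easy half; the missing substantive ingredient is the control of $\pi(G)$ beyond what \Cref{CompositionFactors} provides, and without it the proposed proof does not go through.
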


Note that for any rational group $G$, $[\mathbb{Q}(G):\mathbb{Q}]$ is simply $1$. In \cite{Bac19}, Bachle asked if there is a class $\mathfrak{C}$ of groups containing rational groups for which a uniform bound for $[\mathbb{Q}(G):\mathbb{Q}]$ may be given for every $G\in \mathfrak{C}$. This tempts whether the class of \cut groups has such a property. We restate this question from \cite{Bac19}.
\begin{question}\label{degree_question}
	Does there exist a $c$ such that $[\mathbb{Q}(G):\mathbb{Q}]\leq c$ for every \cut group $G$?
\end{question}

\Cref{degree bound} was motivated by \Cref{degree_question} and \Cref{CompositionFactors}. It states that if \cut groups do not contain arbitrarily large Alternating groups as composition factors, then the \Cref{degree_question} has affirmative answer, i.e., if \Cref{degree_question} has a negative answer, then counter examples will only come from family of groups with arbitrarily large Alternating groups as composition factors.\\

Note that for solvable \cut groups, $[\mathbb{Q}(G):\mathbb{Q}]\leq 2^5$ \cite{Ten12}.

\subsection{ The Gruenberg-Kegel graphs:} The Gruenberg-Kegel graph of a finite group $G$ is the undirected graph whose set of vertices is the prime spectrum of $G$, and two distinct vertices $p$ and $q$ are joined by an edge, if and only if $G$ contains an element of order $pq$. The Gruenberg-Kegel graph of a group $G$, which is also called as the prime graph of $G$, is generally abbreviated as $GK$-graph of $G$ and is denoted by $\Gamma_{GK}(G)$.  The $GK$-graph of $G$ reflects interesting properties of $G$, and at times completely determines it.
If $\Gamma$ is the $GK$-graph of a group of a given type then we say that $\Gamma$ is realizable as the $GK$-graph of groups of that type.
Since the prime spectrum of a solvable \cut group has at most $4$ primes, the $GK$- graphs realizable by solvable \cut groups have finite number of choices. In \cite{BKMdR23}, it is observed that the choices of $GK$-graphs realizable by solvable \cut groups are quite restricted. We present the results from \cite{BKMdR23}.

\begin{theorem}The $GK$-graph of any finite solvable \cut group is one of the graphs in Figure~1. Further, each $GK$-graph in Figure 1 is realizable by some solvable \cut group, except possibly for the ones appearing in the last row.


	\begin{figure}[h!]
		
		\begin{tabular}{|c|c|}
			\hline
			& Graphs \\\hline
			1 vertex & \emph{(a)}\hspace{-0.8cm}
			\begin{subfigure}{.15\textwidth}
				\centering 
				\begin{tikzpicture};
					\node[label=west:{$2$}] at (0,0) (2){};
					\foreach \p in {2}{
						\draw[fill=black] (\p) circle (0.075cm);
					}
				\end{tikzpicture}
			\end{subfigure}
			\hspace{3cm} 
			\emph{(b)}\hspace{-0.8cm}
			\begin{subfigure}{.15\textwidth}
				\centering 
				\begin{tikzpicture}
					\node[label=east:{$3$}] at (0.5,1) (3){};
					\foreach \p in {3}{
						\draw[fill=black] (\p) circle (0.075cm);
					}
				\end{tikzpicture}
			\end{subfigure} 
			\\\hline
			2 vertices & \emph{(c)}
			\hspace{-0.5cm}
			\begin{subfigure}{.15\textwidth}
				\centering 
				\begin{tikzpicture}
					\node[label=west:{$2$}] at (0,1) (2){};
					\node[label=east:{$3$}] at (0.5,1) (3){};
					\foreach \p in {2,3}{
						\draw[fill=black] (\p) circle (0.075cm);
					}
				\end{tikzpicture}
			\end{subfigure}
			\emph{(d)}
			\hspace{-0.5cm}
			\begin{subfigure}{.15\textwidth}
				\centering 
				\begin{tikzpicture}
					\node[label=west:{$2$}] at (0,1) (2){};
					\node[label=east:{$3$}] at (0.5,1) (3){};
					\foreach \p in {2,3}{
						\draw[fill=black] (\p) circle (0.075cm);
					}
					\draw (2)--(3);
				\end{tikzpicture}
			\end{subfigure} 
			\emph{(e)}
			\hspace{-1cm}
			\begin{subfigure}{.15\textwidth}
				\centering
				\begin{tikzpicture}
					\node[label=west:{$2$}] at (0,0.5) (2){};
					\node[label=west:{$5$}] at (0,0) (3){};
					\foreach \p in {2,3}{
						\draw[fill=black] (\p) circle (0.075cm);
					}
				\end{tikzpicture}
			\end{subfigure}
			\emph{(f)}
			\hspace{-1cm}
			\begin{subfigure}{.15\textwidth}
				\centering 
				\begin{tikzpicture}
					\node[label=west:{$2$}] at (0,0.5) (2){};
					\node[label=west:{$5$}] at (0,0) (5){};
					\foreach \p in {2,3}{
						\draw[fill=black] (\p) circle (0.075cm);
					}
					\draw (2)--(3);
				\end{tikzpicture}
			\end{subfigure}
			\emph{(g)}
			\hspace{-1cm}
			\begin{subfigure}{.15\textwidth}
				\centering 
				\begin{tikzpicture}
					\node[label=east:{$3$}] at (0.5,0.5) (3){};
					\node[label=east:{$7$}] at (0.5,0) (7){};
					\foreach \p in {3,7}{
						\draw[fill=black] (\p) circle (0.075cm);
					}
				\end{tikzpicture}
			\end{subfigure}
			\\\hline
			& 
			\emph{(h)}\hspace{-0.3cm}
			\begin{subfigure}{.15\textwidth}
				\centering
				\begin{tikzpicture}
					\node[label=west:{$2$}] at (0,0.5) (2){};
					\node[label=east:{$3$}] at (0.5,0.5) (3){};
					\node[label=west:{$5$}] at (0,0) (5){};
					\foreach \p in {2,3,5}{
						\draw[fill=black] (\p) circle (0.075cm);
					}
					\draw (2)--(3);
				\end{tikzpicture}
			\end{subfigure}
			\emph{(i)}\hspace{-0.3cm}
			\begin{subfigure}{.15\textwidth}
				\centering
				\begin{tikzpicture}
					\node[label=west:{$2$}] at (0,0.5) (2){};
					\node[label=east:{$3$}] at (0.5,0.5) (3){};
					\node[label=west:{$5$}] at (0,0) (5){};
					\foreach \p in {2,3,5}{
						\draw[fill=black] (\p) circle (0.075cm);
					}
					\draw (2)--(3);
					\draw (2)--(5);
				\end{tikzpicture}
			\end{subfigure} 
			\emph{(j)}\hspace{-0.3cm}
			\begin{subfigure}{.15\textwidth}
				\centering
				\begin{tikzpicture}
					\node[label=west:{$2$}] at (0,0.5) (2){};
					\node[label=east:{$3$}] at (0.5,0.5) (3){};
					\node[label=west:{$5$}] at (0,0) (5){};
					\foreach \p in {2,3,5}{
						\draw[fill=black] (\p) circle (0.075cm);
					}
					\draw (2)--(3);
					\draw (3)--(5);
				\end{tikzpicture}
			\end{subfigure}
			\emph{(k)}\hspace{-0.3cm}
			\begin{subfigure}{.15\textwidth}
				\centering
				\begin{tikzpicture}
					\node[label=west:{$2$}] at (0,0.5) (2){};
					\node[label=east:{$3$}] at (0.5,0.5) (3){};
					\node[label=west:{$5$}] at (0,0) (5){};
					\foreach \p in {2,3,5}{
						\draw[fill=black] (\p) circle (0.075cm);
					}
					\draw (2)--(3);
					\draw (2)--(5);
					\draw (3)--(5);
				\end{tikzpicture}
			\end{subfigure}
		\\			3 vertices & \\
			& \emph{(l)}\hspace{-0.3cm}
			\begin{subfigure}{.15\textwidth}
				\centering 
				\begin{tikzpicture}
					\node[label=west:{$2$}] at (0,0.5) (2){};
					\node[label=east:{$3$}] at (0.5,0.5) (3){};
					\node[label=east:{$7$}] at (0.5,0) (7){};
					\foreach \p in {2,3,7}{
						\draw[fill=black] (\p) circle (0.075cm);
					}
					\draw (2)--(3);
				\end{tikzpicture}
			\end{subfigure}
			\emph{(m)}\hspace{-0.3cm}
			\begin{subfigure}{.15\textwidth}
				\centering
				\begin{tikzpicture}
					\node[label=west:{$2$}] at (0,0.5) (2){};
					\node[label=east:{$3$}] at (0.5,0.5) (3){};
					\node[label=east:{$7$}] at (0.5,0) (7){};
					\foreach \p in {2,3,7}{
						\draw[fill=black] (\p) circle (0.075cm);
					}
					\draw (2)--(3);
					\draw (2)--(7);
				\end{tikzpicture}
			\end{subfigure}
			\emph{(n)}\hspace{-0.3cm}
			\begin{subfigure}{.15\textwidth}
				\centering
				\begin{tikzpicture}
					\node[label=west:{$2$}] at (0,0.5) (2){};
					\node[label=east:{$3$}] at (0.5,0.5) (3){};
					\node[label=east:{$7$}] at (0.5,0) (7){};
					\foreach \p in {2,3,7}{
						\draw[fill=black] (\p) circle (0.075cm);
					}
					\draw (2)--(3);
					\draw (3)--(7);
				\end{tikzpicture}
			\end{subfigure} 
			\emph{(o)}\hspace{-0.3cm}
			\begin{subfigure}{.15\textwidth}
				\centering 
				\begin{tikzpicture}
					\node[label=west:{$2$}] at (0,0.5) (2){};
					\node[label=east:{$3$}] at (0.5,0.5) (3){};
					\node[label=east:{$7$}] at (0.5,0) (7){};
					\foreach \p in {2,3,7}{
						\draw[fill=black] (\p) circle (0.075cm);
					}
					\draw (2)--(3);
					\draw (2)--(7);
					\draw (3)--(7);
				\end{tikzpicture}
			\end{subfigure}
			\\\hline

	4 vertices&  
		\begin{subfigure}{0\textwidth}
			\setcounter{subfigure}{15}
		\end{subfigure}
		\emph{(p)}\hspace{-0.4cm}
		\begin{subfigure}{.15\textwidth}
			\centering
			\begin{tikzpicture}
				\node[label=west:{$2$}] at (0,0.5) (2){};
				\node[label=east:{$3$}] at (0.5, 0.5) (3){};
				\node[label=west:{$5$}] at (0, 0) (5){};
				\node[label=east:{$7$}] at (0.5, 0) (7){};
				\foreach \p in {2,3,5,7}{
					\draw[fill=black] (\p) circle (0.075cm);
				}
				\draw (2)--(3);
				\draw (2)--(7);;
				\draw (3)--(5);
				\draw (5)--(7);
			\end{tikzpicture}
		\end{subfigure}
		 \emph{(q)}\hspace{-0.4cm}
		\begin{subfigure}{.15\textwidth}
			\centering
			\begin{tikzpicture}
				\node[label=west:{$2$}] at (0,0.5) (2){};
				\node[label=east:{$3$}] at (0.5, 0.5) (3){};
				\node[label=west:{$5$}] at (0, 0) (5){};
				\node[label=east:{$7$}] at (0.5, 0) (7){};
				\foreach \p in {2,3,5,7}{
					\draw[fill=black] (\p) circle (0.075cm);
				}
				\draw (2)--(3);
				\draw (2)--(5);
				\draw (2)--(7);;
				\draw (3)--(5);
				\draw (5)--(7);
			\end{tikzpicture}
		\end{subfigure}
		\emph{(r)}\hspace{-0.4cm}
		\begin{subfigure}{.15\textwidth}
			\centering
			\begin{tikzpicture}
				\node[label=west:{$2$}] at (0,0.5) (2){};
				\node[label=east:{$3$}] at (0.5, 0.5) (3){};
				\node[label=west:{$5$}] at (0, 0) (5){};
				\node[label=east:{$7$}] at (0.5, 0) (7){};
				\foreach \p in {2,3,5,7}{
					\draw[fill=black] (\p) circle (0.075cm);
				}
				\draw (2)--(3);
				\draw (2)--(5);
				\draw (2)--(7);;
				\draw (3)--(5);
				\draw (3)--(7);
				\draw (5)--(7);
			\end{tikzpicture}
		\end{subfigure}
		\\\hline
		4 vertices {(possible)} & \emph{(s)}\hspace{-0.4cm}
		\begin{subfigure}{.15\textwidth}
			\centering
			\begin{tikzpicture}
				\node[label=west:{$2$}] at (0,0.5) (2){};
				\node[label=east:{$3$}] at (0.5, 0.5) (3){};
				\node[label=west:{$5$}] at (0, 0) (5){};
				\node[label=east:{$7$}] at (0.5, 0) (7){};
				\foreach \p in {2,3,5,7}{
					\draw[fill=black] (\p) circle (.075cm);
				}
				\draw (2)--(3);
				\draw (2)--(7);
				\draw (3)--(5);
				\draw (3)--(7);
			\end{tikzpicture}
		\end{subfigure}
		\emph{(t)}\hspace{-0.4cm}
		\begin{subfigure}{.15\textwidth}
			\centering
			\begin{tikzpicture}
				\node[label=west:{$2$}] at (0,0.5) (2){};
				\node[label=east:{$3$}] at (0.5, 0.5) (3){};
				\node[label=west:{$5$}] at (0, 0) (5){};
				\node[label=east:{$7$}] at (0.5, 0) (7){};
				\foreach \p in {2,3,5,7}{
					\draw[fill=black] (\p) circle (.075cm);
				}
				\draw (2)--(3);
				\draw (2)--(5);
				\draw (2)--(7);
				\draw (3)--(5);
			\end{tikzpicture}
		\end{subfigure}
		\emph{(u)}\hspace{-0.4cm}
		\begin{subfigure}{.15\textwidth}
			\centering
			\begin{tikzpicture}
				\node[label=west:{$2$}] at (0,0.5) (2){};
				\node[label=east:{$3$}] at (0.5, 0.5) (3){};
				\node[label=west:{$5$}] at (0, 0) (5){};
				\node[label=east:{$7$}] at (0.5, 0) (7){};
				\foreach \p in {2,3,5,7}{
					\draw[fill=black] (\p) circle (.075cm);
				}
				\draw (2)--(3);
				\draw (2)--(7);
				\draw (3)--(5);
				\draw (3)--(7);
				\draw (5)--(7);
			\end{tikzpicture}
		\end{subfigure} 
		\emph{(v)}\hspace{-0.4cm}
		\begin{subfigure}{.15\textwidth}
			\centering
			\begin{tikzpicture}
				\node[label=west:{$2$}] at (0,0.5) (2){};
				\node[label=east:{$3$}] at (0.5, 0.5) (3){};
				\node[label=west:{$5$}] at (0, 0) (5){};
				\node[label=east:{$7$}] at (0.5, 0) (7){};
				\foreach \p in {2,3,5,7}{
					\draw[fill=black] (\p) circle (.075cm);
				}
				\draw (2)--(3);
				\draw (2)--(5);
				\draw (2)--(7);
				\draw (3)--(5);
				\draw (3)--(7);
			\end{tikzpicture}
		\end{subfigure}
	
		\\ \hline
	\end{tabular}
	\caption{\label{Four} GK-graphs of finite solvable \cut groups.}
\end{figure}

\end{theorem}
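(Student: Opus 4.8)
The statement has two halves --- a \emph{necessity} part (the $GK$-graph of a solvable \cut group is one of the listed graphs) and a \emph{realizability} part (each graph outside the last row actually occurs) --- and I would treat them separately.

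For necessity, the starting point is \Cref{prime_spectrum_cut}(iv), giving $\pi(G)\subseteq\{2,3,5,7\}$, so $\Gamma_{GK}(G)$ is supported on at most these four vertices. Everything else is extracted from the inverse semi-rationality criterion~\ref{E7} applied locally: for each $x\in G$, the group $B_G(x)=N_G(\langle x\rangle)/C_G(\langle x\rangle)$ sits inside $\Aut(\langle x\rangle)\cong(\mathbb{Z}/|x|\mathbb{Z})^\times$ and satisfies $\langle B_G(x),\tau\rangle=\Aut(\langle x\rangle)$, where $\tau$ is the inversion. Since $\langle\tau\rangle$ has order at most $2$ and lies in the unique involution-subgroup of the cyclic groups $(\mathbb{Z}/5\mathbb{Z})^\times\cong C_4$ and $(\mathbb{Z}/7\mathbb{Z})^\times\cong C_6$, reading this off at elements of prime order forces every order-$5$ element to be \emph{rational} --- hence $4\mid|G|$ --- and every order-$7$ element to have $B_G(x)\in\{C_3,C_6\}$ --- hence $3\mid|G|$. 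In particular $5\in\pi(G)\Rightarrow 2\in\pi(G)$ and $7\in\pi(G)\Rightarrow 3\in\pi(G)$, and together with $\pi(G)\subseteq\{2,3,5,7\}$ this leaves exactly the eight admissible vertex sets $\{2\},\{3\},\{2,3\},\{2,5\},\{3,7\},\{2,3,5\},\{2,3,7\},\{2,3,5,7\}$ (compare also \Cref{prime_spectrum_cut}(iii)). Applying~\ref{E7} at an element of order $pq$ --- that is, at an edge $p\!-\!q$ of the graph --- bounds $|B_G(x)|$ below inside $(\mathbb{Z}/pq\mathbb{Z})^\times$ and hence imposes divisibility conditions on $|G|$: an edge $2\!-\!5$ or $3\!-\!5$ forces $4\mid|G|$, an edge $2\!-\!7$ forces $3\mid|G|$, an edge $3\!-\!7$ forces $2\mid|G|$, and an edge $5\!-\!7$ forces $12\mid|G|$ (so in that case all of $2,3,5,7$ divide $|G|$).

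The part of the first half that this local bookkeeping does \emph{not} yield is that some edges are compulsory --- inspection of Figure~1 shows that whenever $\{2,3,5\}\subseteq\pi(G)$ the edge $2\!-\!3$ is always there, and in the $\{2,3,5,7\}$ case the edges $2\!-\!7$ and $3\!-\!5$ are present as well. I expect proving these compulsory edges to be the main obstacle. Here I would exploit the structural pressure created by the fact that every order-$5$ element $x$ is rational: it is then non-central, and a Sylow $5$-subgroup cannot be acted upon by a group with no $C_4$-quotient on one of its lines, so the normalizer of $\langle x\rangle$ must contain a genuine $C_4$; interacting this $C_4$ with a Hall $\{2,3\}$-subgroup (via Cauchy's theorem) produces an element of order $6$, and elements of order $14$ and $15$ are obtained by the analogous argument at order-$7$ elements. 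Once the compulsory edges are established, intersecting all the constraints leaves precisely the list (a)--(v), which one matches against the figure.

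For realizability, I would produce, for each graph outside the last row, an explicit solvable \cut group assembled from small \cut blocks and verified via \Cref{cut_direct}: the cyclic groups $C_2,C_3,C_6$, the symmetric groups $S_3,S_4$, the Frobenius groups $C_5\rtimes C_4$, $C_7\rtimes C_3$, $C_7\rtimes C_6$, and suitable larger (semi)direct products such as $C_{15}\rtimes C_4$, $C_5^2\rtimes \SL(2,3)$, $C_6\times(C_7\rtimes C_6)$, and, for the complete graph~(r), a group of the form $C_{210}\rtimes A$ with $A$ abelian of order $24$. For direct products one must respect the constraint in \Cref{cut_direct} that all non-rational factors share one imaginary quadratic character field; the organising principle is that a prime lying in a Frobenius \emph{kernel} does not combine with the complement primes, whereas coprime direct factors combine in every way, so routing each non-adjacent pair through a Frobenius action and each adjacent pair through a direct (or central) factor realizes the target graph. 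The four graphs in the last row are left open exactly because no such construction is known and non-realizability seems difficult to establish --- hence the ``except possibly''.
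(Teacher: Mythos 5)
Your proposal is a plan rather than a proof, and the decisive steps are missing. The easy half of the necessity argument is fine: $\pi(G)\subseteq\{2,3,5,7\}$ for solvable \cut groups, the local analysis of $B_G(x)$ inside $\Aut(\langle x\rangle)$ at elements of order $5$, $7$, $10$, $14$, $15$, $21$, $35$, and the resulting divisibility constraints are all correct. But the substance of the theorem is precisely the ``compulsory edge'' phenomenon, and there your argument does not exist: no reason is given why a $C_4$ inside $N_G(\langle x\rangle)/C_G(\langle x\rangle)$ for an order-$5$ element, ``interacting with a Hall $\{2,3\}$-subgroup via Cauchy's theorem'', should produce an element of order $6$ --- a Hall $\{2,3\}$-subgroup of a \cut group need not itself be \cut, so nothing forces the edge $2\!-\!3$ by such local bookkeeping. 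Moreover your list of compulsory constraints is incomplete, so the claim that ``intersecting all the constraints leaves precisely the list (a)--(v)'' is false as stated: Figure~1 also forces the edge $2\!-\!3$ when $\pi(G)=\{2,3,7\}$ (graphs (l)--(o) all contain it, so the graphs on $\{2,3,7\}$ with edge set $\emptyset$, $\{2\!-\!7\}$, $\{3\!-\!7\}$ or $\{2\!-\!7,3\!-\!7\}$ must be excluded, which your sketch never addresses), and in the four-vertex case the graph with exactly the edges $2\!-\!3$, $2\!-\!7$, $3\!-\!5$ is absent from the figure even though it satisfies every local divisibility condition and the general restrictions on prime graphs of solvable groups, so at least one further edge must be shown to occur. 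These exclusions are the hard content; in the source the theorem is only quoted from [BKMdR23], where the proof rests on a genuinely structural analysis (Hall subgroups, Fitting-type arguments and sections of the form $C_5\rtimes C_4$, $C_7\rtimes C_3$, etc.), not on the local criterion \ref{E7} alone.

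The realizability half is likewise only gestured at. Several of your candidates do work (e.g.\ $C_5\rtimes C_4$, $C_7\rtimes C_3$, $C_7\rtimes C_6$ for (l), $C_{15}\rtimes C_4$ for (j), $C_5^2\rtimes \SL(2,3)$ for (h)), but none is verified, and for the four-vertex graphs (p), (q) and especially the complete graph (r) the proposal ``$C_{210}\rtimes A$ with $A$ abelian of order $24$'' is not checked to be \cut: the mixed elements (of orders such as $12$, $20$, $28$ or $60$) impose conditions of type \ref{E7} that an abelian complement does not satisfy automatically, and one must also verify that the resulting GK-graph is exactly the target graph. As it stands, both halves of the statement remain unproved; what you have is a correct reduction of the vertex-set question plus a correct catalogue of local divisibility constraints, which together establish only a strict weakening of the theorem.
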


The answer for the following question is seeked. 
\begin{question}

Which of the graphs (s), (t), (u) or (v) in Figure 1 is/are realizable by a solvable \cut group? 
\end{question}
It may be noted that in \cite{BKMdR23}, the classification of $GK$-graphs realizable by varied classes of rational and \cut groups has been provided. These classes include abelian, nilpotent, metacyclic, metabelian, supersolvable, metanilpotent and Frobenius groups.
\begin{remark}
The $GK$-graphs of solvable rational groups have been completely classified recently (see \cite{BKMdR23} and \cite{DLdR24}). 
\end{remark}


\subsection{Classification of \cut groups:}\label{classification}In Section 2, we have seen several criteria to check if a given finite group is \cut or not. Given a particular class $C$, it is naturally seeked whether the properties of $C$ yield the complete list of \cut groups in $C$, or a refined criterion to answer this question. For instance, we observed in \Cref{abelian} that an abelian group $G$ is \cut if and only if its exponent divides $4$ or $6$. For $p$-groups, we observed in \Cref{PrimitiveRoots} that $p\in\{2,3\}$. Further a $2$-group is \cut if and only if every element $x\in G$ satisfies $x^3\sim x$ or $x^{-1}$ and a $3$-group is \cut if and only if $x^2\sim x^{-1}$ for every $x\in G$. For non-abelian groups of order $p^n$, $n< 5$, a complete list of $12$ \cut groups is computed in (\cite{MP18}, Proposition 2).  A nilpotent group $G$ is \cut if and only if it is either a \cut $p$-group or direct product of a rational $2$-group and a \cut $3$-group. Particularly, if $G$ is of nilpotency class at most $2$, then $G$ is \cut if and only if $\mathcal{Z}(G/N)$ is of exponent $1,2,3,4$ or $6$ for every quotient group $G/N$ of $G$ (\cite{BCJM21}, Theorem 4.1). If $G$ is an odd order group, then $G$ is \cut, if and only if, 
every element $x$ has order either $7$ or a power of $3$ and satisfies  $ x^5 \sim  x^{-1}$ (\cite{Mah18}, Theorem 1). If $G$ is an even order solvable group, then similar criterion exists when the group is known to have elements of prime power order only (\cite{Mah18}, Theorem 2). It shall be naturally of interest to extend the result for all solvable groups. Particularly, for metacyclic groups, we have a complete list (upto isomorphism, and with explicit presentations) of $46$ \cut groups (\cite{BMP17}, Theorem 5). Among non-solvable groups, a substantial class of \cut groups is that of Symmetric groups, which are all known to be rational. As also pointed in the introduction, the alternating \cut groups have been of interest since the very beginning of study of \cut groups. It is known that $A_n$ is \cut if and only if $n\in\{1,2,3,4,7,8,9,12\}$. The complete list of simple \cut groups is deducable from \Cref{CompositionFactors}, by a direct \texttt{GAP} check, and is also provided in \cite{BCJM21}. The Frobenius \cut groups are classified in \cite{Bac18}. In particular, all Camina \cut groups are classified, as Camina $p$-groups are already \cut.

\begin{remark}
	The classification of \cut groups clearly indicates the existence of \cut groups in varied classes. The class of \cut groups turns out to be immense.  In fact, in \texttt{GAP} check, about $86.62 \%$ of the groups of order at most $512$ and $78.55 \%$ of groups of order at most 1023 are found to be \cut groups, whereas these percentages are respectively $ 0.57 \%$ and $0.52\%$ for rational groups. Note that in (\cite{BCJM21}, Proposition 7.1), it is observed that $$ \lim_{n\rightarrow \infty} \frac{\ln c(p^n) }{\ln f(p^n)}=1,~\mathrm{for} ~p\in \{2,3\},$$ where $c(r)$ denotes the number of \cut groups of order $r$ and $f(r)$ denotes the number of all groups of order~$r$.
\end{remark}

\section{Generalisations of \cut groups: Semi-rational and quadratic rational groups} In this section, we study some classes of groups which were originally defined to generalise the notion of rational groups, but eventually are known to contain the class of \cut groups. For these classes of groups, we closely analyse the properties studied about \cut groups in previous sections, and point out relevant questions therein.\\ 

As introduced in 2.2, a group $G$ is said to be rational, if all its elements are rational, i.e., $\mathbb{Q}(x)=\mathbb{Q}$ for every $x\in G$, or equivalently, all generators of $\langle x\rangle$ lie in the same conjugacy class; equivalently, every irreducible character of $G$ is rational, i.e., $\mathbb{Q}(\chi)=\mathbb{Q}$ for every $\chi \in \Irr(G)$. The class of rational groups is well known and has been well studied. The properties of rational groups naturally motivate the questions on \cut groups. Furthermore, generalising the notion of rational groups, Chillag et al. \cite{CD10} introduced semi-rational groups, of which \cut groups form a subclass. In this context, \cut groups are usually referred to as inverse semi-rational groups. It is natural to seek which properties of \cut groups are retained by semi-rational groups, or such similar classes. 



Let $G$ be a group. As defined in 2.2, an element $x\in G$ is said to be semi-rational if all generators of $\langle x\rangle$ are contained in at most two conjugacy classes, say $C_x$ and $C_{x^{m_x}}$ ($m_x \in \mathbb{Z}$) and a group $G$ is said to be semi-rational, if every element of $G$ is semi-rational. In particular, if there exists an integer $m$ such that for every element $x\in G$, the generators of $\langle x\rangle$ are contained in $C_x \cup C_{x^{m}}$, then $G$ is said to be uniformly semi-rational group or $m$ semi-rational. For instance, a $\cut$ group $G$ is a uniformly semi-rational group with $m=-1$, or a $-1$ semi-rational group.

A group $G$ is said to be quadratic rational if for every $\chi\in \Irr(G)$, the character field $\mathbb{Q}(\chi)$ is either $\mathbb{Q}$ or a quadratic extension of $\mathbb{Q}$, i.e., each irreducible character of $G$ is rational or quadratic \cite{Ten12}. Indeed, a \cut group is a quadratic rational group.  

In \cite{Ten12}, it has been observed that if $G$ is a semi-rational group, then for every $x\in G$, $\mathbb{Q}(x)$ is either $\mathbb{Q}$ or its quadratic extension. As noted earlier, the field $\mathbb{Q}(x)$ is formed by adjoining the elements of a column (the column of conjugacy class of $x$) in the character table of $G$, whereas the field $\mathbb{Q}(\chi)$ is formed by adjoining the elements of a row of its character table. Hence, the classes of semi-rational and quadratic rational groups are not the same, to begin with. In fact, the group $G_1$ in \Cref{G1} is semi-rational but not quadratic rational whereas the group $G_2$ in \Cref{G2} is quadratic rational but not semi-rational. Call a group to be quadratic semi-rational, if it is both semi-rational as well as quadratic rational. Clearly, every \cut group is a quadratic semi-rational group. We shall soon observe that the property of being quadratic semi-rational retains most properties of \cut groups, which do not hold for the class of quadratic rational or semi-rational groups, in general.
For this subsection we use the term inverse semi-rational for \cut groups, and for brevity, we use the following notation for the classes of groups indicated:\\

\begin{tabular}{llll}

$\mathbb{Q}$ groups	&&& rational groups\\
$QR$ &&	& quadratic rational groups\\

$SR$&&	& semi-rational groups  \\
	
$ISR$	&&& inverse semi-rational groups \\
	
	$USR$&&& uniformly semi-rational groups\\
	
	$QSR$& && quadratic semi-rational groups  \\
	&&&\\
\end{tabular}

We begin by observing that though semi-rational groups need not be quadratic rational, a uniformly semi-rational group is always quadratic rational. 

\begin{proposition}\label{USR}
	$USR \subseteq QR$
\end{proposition}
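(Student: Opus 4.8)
The plan is to show that membership in $USR$ forces each character field $\mathbb{Q}(\chi)$ to have degree at most $2$ over $\mathbb{Q}$, which is precisely the defining property of $QR$. The key is to translate the condition ``$m$ semi-rational'' into a statement about the action of the Galois group $\Gal(\mathbb{Q}(\xi)/\mathbb{Q})$ on conjugacy classes (equivalently, on irreducible characters), where $\xi$ is a primitive $e$-th root of unity and $e$ is the exponent of $G$. Recall the standard identification $\Gal(\mathbb{Q}(\xi)/\mathbb{Q}) \cong (\mathbb{Z}/e\mathbb{Z})^{\times}$, where $\sigma_j$ acts on a character $\chi$ by $\chi^{\sigma_j}(x) = \chi(x^j)$. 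If $G$ is $m$ semi-rational, then for every $x \in G$ and every $j$ coprime to $|x|$, the class $C_{x^j}$ equals $C_x$ or $C_{x^m}$, so $\sigma_j$ fixes every column of the character table up to the single permutation swapping the $C_x$-column with the $C_{x^m}$-column.

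First I would make precise that there is a uniform $m$ that works simultaneously for all elements, and then observe that, on each element $x$, the stabilizer of $C_x$ inside $\langle \sigma_j : \gcd(j,|x|)=1\rangle$ has index at most $2$ (since the orbit of $C_x$ under this action is $\{C_x\}$ or $\{C_x, C_{x^m}\}$). Passing from columns to rows: for a fixed $\chi \in \Irr(G)$, the field $\mathbb{Q}(\chi)$ is the fixed field of $\{\sigma_j : \chi^{\sigma_j} = \chi\}$, and $[\mathbb{Q}(\chi):\mathbb{Q}]$ equals the size of the Galois orbit of $\chi$. The hypothesis gives that $\chi^{\sigma_j}$ takes at most two values as $\chi$ ranges over a Galois orbit — more carefully, one shows that the orbit of $\chi$ has size at most $2$ by using that $m$ is fixed. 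The cleanest route is: the map $j \mapsto \sigma_j$ restricted to the stabilizer of all columns except possibly one swap shows that $\langle \sigma_j\rangle$ modulo the ``rationalizing'' subgroup acts with orbits of size $\le 2$ on $\Irr(G)$, hence every Galois orbit of characters has size $\le 2$, hence $[\mathbb{Q}(\chi):\mathbb{Q}] \le 2$.

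Concretely I would argue as follows. Fix $\chi \in \Irr(G)$ and let $H = \{\sigma \in \Gal(\mathbb{Q}(\xi)/\mathbb{Q}) : \chi^{\sigma} = \chi\}$; then $[\mathbb{Q}(\chi):\mathbb{Q}] = [\Gal(\mathbb{Q}(\xi)/\mathbb{Q}) : H]$. Suppose for contradiction that this index is at least $3$, so there exist $\sigma_1, \sigma_2, \sigma_3$ in distinct cosets with $\chi^{\sigma_1}, \chi^{\sigma_2}, \chi^{\sigma_3}$ pairwise distinct (taking $\sigma_1 = \mathrm{id}$). Now I would find an element $x \in G$ on which these three characters take pairwise distinct values — this is where one uses that characters are distinguished by their values, together with the fact that the $\sigma_i$ differ as automorphisms of $\mathbb{Q}(\xi)$, so they differ on some root of unity appearing as an eigenvalue, i.e., on $\chi(x^{j})$ for suitable $x, j$. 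Then $C_x$, $C_{x^{j_1}}$, $C_{x^{j_2}}$ are three distinct classes all consisting of generators of $\langle x\rangle$ (after replacing $x$ by an appropriate power to arrange the exponents to be units mod $|x|$), contradicting that $x$ is semi-rational. The main obstacle I anticipate is exactly this step: producing a single group element witnessing three distinct values, since a priori the three automorphisms might be separated only by looking at different elements. The fix is to work with the element whose order is $e$ (the exponent) if $G$ has one, or more robustly to use that $\mathbb{Q}(\chi) \subseteq \mathbb{Q}(\xi)$ and $\mathbb{Q}(\chi) = \mathbb{Q}(\chi(x) : x \in G)$, so the compositum of the relevant value fields is $\mathbb{Q}(\chi)$ itself; a degree-$\ge 3$ extension cannot be generated while every single generator $\chi(x)$ lies in a subfield $\mathbb{Q}(\chi(x))$ that, by semi-rationality of $x$ (via the criterion that $\mathbb{Q}(x)$ is $\mathbb{Q}$ or quadratic, stated earlier in the excerpt), is contained in a quadratic-or-trivial piece — and uniformity in $m$ forces all these quadratic pieces to be ``compatible'' enough that their compositum inside $\mathbb{Q}(\chi)$ still has degree $\le 2$. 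I would then assemble these observations into the inclusion $USR \subseteq QR$.

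Alternatively, and perhaps more transparently, I would phrase the whole argument on the side of columns using the already-quoted fact (from \cite{CD10}, \cite{Ten12}) that $x$ semi-rational $\iff$ $\mathbb{Q}(x)$ is $\mathbb{Q}$ or quadratic, and then dualize: since $USR \subseteq SR$, every $\mathbb{Q}(x)$ is at most quadratic; the extra uniform-$m$ hypothesis pins down which quadratic field occurs (it is determined by $m$ and $|x|$ in a uniform way), and a short Galois-theoretic bookkeeping then yields that every $\mathbb{Q}(\chi)$ is also at most quadratic. I expect the write-up to be short once the correct normalization of ``uniform $m$'' is fixed; the delicate point remains ensuring the single uniform $m$ genuinely constrains the row fields and not merely the column fields, and I would treat that as the crux of the proof.
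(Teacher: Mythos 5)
Your high-level goal (show that every Galois orbit on $\Irr(G)$ has size at most $2$) matches the paper's, but your proof has a genuine gap exactly at the point you flag yourself. Your contradiction argument requires a single element $x$ on which three distinct Galois conjugates of $\chi$ take pairwise distinct values; no such element need exist, and nothing in your write-up produces one. Worse, the argument you sketch only invokes semi-rationality of the individual element $x$ (three distinct classes among the generators of $\langle x\rangle$), not the uniformity of $m$; since semi-rational does \emph{not} imply quadratic rational (the group $G_1$ of Example \ref{G1} is semi-rational but not quadratic rational), any argument using only per-element semi-rationality must fail, so the missing step is not a technicality. Your fallback --- each $\chi(x)$ lies in the at-most-quadratic field $\mathbb{Q}(x)$, and ``uniformity makes these compatible'' --- is not an argument: a compositum of quadratic fields can have large degree, different elements of a uniformly semi-rational group genuinely do have different quadratic fields $\mathbb{Q}(x)$ (already for \cut groups, i.e.\ $m=-1$, both $\mathbb{Q}(\sqrt{-1})$ and $\mathbb{Q}(\sqrt{-3})$ can occur), and you never say how the uniform $m$ actually enters.

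The paper closes the gap with a short trick your sketch does not reach: fix $\chi$ and consider the virtual character $\chi+\chi^{\sigma_m}$. For $\sigma_j$ with $\gcd(j,|G|)=1$ and any $x\in G$, either $x^j\sim x$, in which case $x^{jm}=(x^j)^m\sim x^m$, or $x^j\sim x^m$, in which case $x^{jm}\sim x^{m^2}\sim x$; in both cases $\chi(x^j)+\chi(x^{jm})=\chi(x)+\chi(x^m)$. Hence $\chi^{\sigma_j}+(\chi^{\sigma_m})^{\sigma_j}=\chi+\chi^{\sigma_m}$, and linear independence of irreducible characters forces $\chi^{\sigma_j}\in\{\chi,\chi^{\sigma_m}\}$, giving $[\mathbb{Q}(\chi):\mathbb{Q}]\le 2$. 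This is precisely where uniformity is used: the same companion character $\chi^{\sigma_m}$ works simultaneously for all $x$, which is the ``compatibility'' your proposal appeals to but never establishes. Until you supply an argument of this kind (or some other mechanism that genuinely exploits the single $m$), the proposal does not prove $USR\subseteq QR$.
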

\begin{proof} 
	Let $G$ be an $m$ semi-rational group and let $\chi\in \Irr(G)$. Consider $\sigma\in\Gal(\mathbb{Q}(\chi)/\mathbb{Q})$ and extend $\sigma$ to an automorphism $\sigma_j: \zeta \rightarrow {\zeta }^j$ of $\mathbb{Q}(\zeta)$, where $\zeta$ is a $|G|^\mathrm{th}$ root of unity and $\gcd(j,|G|)=1$. Then ${\chi}^{\sigma_j}(x)=\chi (x^j)=\chi (x)$ or $\chi (x^m)$, $x\in G$. Now, $x$ being a semi-rational element in $G$ satisfies $x^{m^2}\sim x$. This implies, $(\chi^{\sigma_m})^{\sigma_j}(x)=\chi (x^m)$ or $\chi (x^{m^2})=\chi (x) $ or $\chi (x^m)$. This implies $\chi + \chi^{\sigma_m}=\chi^{\sigma_j} + (\chi^{\sigma_m})^{\sigma_j}$. As irreducible characters are linearly independent, $\chi^{\sigma_j}=\chi$ or $\chi^{\sigma_m}$. Consequently, $[\mathbb{Q}(\chi) : \mathbb{Q}]\leq 2$, i.e., $G$ is quadratic rational. 
\end{proof}
 The above result also follows from \cite{dRV24} with a different treatment.\\
 
It may be noted that $ISR \subsetneq USR \subsetneq QSR$. This follows from the fact that the dihedral group of order $10$ is a $2$ semi-rational group but not inverse semi-rational and all alternating groups are known to be quadratic semi-rational, but not all of them are uniformly semi-rational (see \cite{Ver24}, Theorem A). We thus have
$$\mathbb{Q} \mathrm{~groups}	\subsetneq ISR \subsetneq USR \subsetneq QSR = SR \cap QR.$$

However, we shall observe that for certain types of groups, these classes are exactly same.
\begin{proposition}\label{abelian_same}If $G$ is an abelian group or a group of odd order, then the following statements are equivalent. 

	\begin{enumerate}
		\item[\rm{(i)}] $G$ is inverse semi-rational.
		\item[\rm{(ii)}]  $G$ is uniformly semi-rational.
		\item[\rm{(iii)}] $G$ is quadratic semi-rational
		\item[\rm{(iv)}] $G$ is quadratic rational.
		\item[\rm{(v)}] $G$ is semi-rational.
		
	\end{enumerate}
\end{proposition}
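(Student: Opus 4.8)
The plan is to establish the chain of implications (iv)\,$\Rightarrow$\,(v)\,$\Rightarrow$\,(ii)\,$\Rightarrow$\,(i)\,$\Rightarrow$\,(iii)\,$\Rightarrow$\,(iv), where several of the arrows are already available: (i)\,$\Rightarrow$\,(iii) and (iii)\,$\Rightarrow$\,(iv) are immediate from the inclusions $ISR \subseteq QSR \subseteq QR$ recorded above, and (iii)\,$\Rightarrow$\,(v) is also immediate. The substantive content is therefore to show, for the two special families (abelian groups, odd-order groups), that quadratic rational forces inverse semi-rational, and that (iv) forces semi-rational and uniformly semi-rational with the \emph{same} parameter. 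So it suffices to prove (iv)\,$\Rightarrow$\,(i) for abelian groups and for odd-order groups; the remaining links then follow formally, since $ISR\subseteq USR\subseteq QSR$ closes the circle.

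For the \emph{abelian} case I would argue directly from \ref{E8}: $G$ is \cut iff $\mathbb{Q}(x)$ is $\mathbb{Q}$ or imaginary quadratic for every $x$, and by \Cref{abelian} this is equivalent to $\exp(G)\mid 4$ or $6$. Now suppose $G$ is abelian and quadratic rational. In an abelian group $\mathbb{Q}(x) = \mathbb{Q}(\zeta_{|x|})$ where $\zeta_n$ is a primitive $n$th root of unity (each linear character sends $x$ to a root of unity of order dividing $|x|$, and some character realizes a primitive one). Hence $[\mathbb{Q}(x):\mathbb{Q}] = \varphi(|x|)$, and quadratic rationality forces $\varphi(|x|)\le 2$ for every $x$, i.e. $|x|\in\{1,2,3,4,6\}$, so $\exp(G)\mid 12$; but $\mathbb{Q}(\zeta_{12})$ has degree $4$, so in fact no element of order $12$ can occur and one checks $|x|\in\{1,2,3,4,6\}$ with $\exp(G)\mid 4$ or $6$ (orders $4$ and $3$ cannot coexist since that would produce an element of order $12$). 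Thus $G$ is \cut, giving (iv)\,$\Rightarrow$\,(i). The parameter $m=-1$ works because $\mathbb{Q}(x)$ imaginary quadratic (or $\mathbb{Q}$) gives $x^j\sim x$ or $x^{-1}$, trivially in the abelian setting since then $\mathbb{Q}(x)=\mathbb{Q}$ would be needed — in fact for $\exp\mid 4,6$ every such $x$ is automatically inverse semi-rational by \ref{E4}.

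For the \emph{odd-order} case, the key input is \Cref{PrimitiveRoots}: for odd $|x|$ the group $\langle x\rangle$ has order a product of odd prime powers, and the criterion \ref{E4} together with \Cref{abelian_same}(iv) can be leveraged via the semi-rational characterization $\mathbb{Q}(x)$ quadratic or $\mathbb{Q}$ (Chillag et al., \cite{CD10,Ten12}). If $G$ has odd order and is quadratic rational, then $\mathbb{Q}(x)$ is quadratic or $\mathbb{Q}$ for every $x$; since $|x|$ is odd, $\mathbb{Q}(x)\subseteq\mathbb{Q}(\zeta_{|x|})$ is real (complex conjugation is inversion, and $x\sim x^{-1}$ would be needed for reality, which need not hold — but $\mathbb{Q}(x)$ is a subfield of a real-or-not cyclotomic-type field), and the point is that a quadratic subfield fixed appropriately must be real, i.e. $\mathbb{Q}(\sqrt{d})$ with $d>0$; one then argues that an odd-order group cannot have a genuinely \emph{real} quadratic class obstruction and instead the only possibility compatible with \ref{E4} is $\mathbb{Q}(x)=\mathbb{Q}$ or $\mathbb{Q}(\sqrt{-d})$, forcing inverse semi-rationality. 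Concretely I would invoke that odd-order semi-rational groups were analyzed and shown to be exactly the \cut ones (this is essentially \cite{Mah18}, Theorem 1, characterizing odd-order \cut groups by $|x|\in\{7\}\cup\{3^k\}$ and $x^5\sim x^{-1}$; the same constraints drop out of quadratic rationality). \textbf{The main obstacle} is precisely this odd-order step: one must rule out that a quadratic rational odd-order group has an element $x$ with $\mathbb{Q}(x)$ \emph{real} quadratic, which would make it semi-rational but not inverse semi-rational. The resolution should use that in odd order, no element is conjugate to its inverse unless it is trivial, so the "two conjugacy classes" in the semi-rational condition are $C_x$ and $C_{x^{m}}$ with $m\not\equiv -1$; combined with the closure condition $x^{m^2}\sim x$ and \Cref{PrimitiveRoots} (reducing to a single primitive root $r$, and to orders $7$ or powers of $3$), one shows $m\equiv -1$ is forced after all, whence $G$ is inverse semi-rational. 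The remaining equivalences (ii), (iii), (v) then follow since $ISR\subseteq USR\subseteq QSR\subseteq QR\cap SR$ and we have closed the loop.
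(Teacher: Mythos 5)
There is a genuine gap, and it is twofold. First, your reduction does not close the loop through (v). Proving (iv)\,$\Rightarrow$\,(i) together with the general inclusions $ISR\subseteq USR\subseteq QSR\subseteq QR$ yields the equivalence of (i)--(iv), and (i)\,$\Rightarrow$\,(v) is general, but nothing in your argument gives (v)\,$\Rightarrow$\,(i) (or (v)\,$\Rightarrow$\,(iv)). Since semi-rational does \emph{not} imply quadratic rational in general --- the group $G_1$ of Example~\ref{G1} is semi-rational but not quadratic rational --- statement (v) is a genuinely weaker-looking hypothesis that must be separately shown to force (i) for abelian and odd-order groups. The paper proves both (iv)\,$\Rightarrow$\,(i) and (v)\,$\Rightarrow$\,(i); in the abelian case it does so simultaneously, by exhibiting for each divisor $d$ of the exponent both an element $x_d$ with $[\mathbb{Q}(x_d):\mathbb{Q}]=\phi(d)$ and an irreducible character $\chi_d$ with $[\mathbb{Q}(\chi_d):\mathbb{Q}]=\phi(d)$, so that either hypothesis forces $\phi(d)\le 2$ and hence exponent dividing $4$ or $6$. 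Your abelian argument is essentially this (once one notes that under hypothesis (iv) the bound on $\varphi(|x|)$ must come from the dual group: quadratic rationality bounds $[\mathbb{Q}(\chi):\mathbb{Q}]$, not $[\mathbb{Q}(x):\mathbb{Q}]$, and for abelian $G$ a character of order $d$ has field $\mathbb{Q}(\zeta_d)$), but hypothesis (v) is never treated.

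Second, your odd-order case of (iv)\,$\Rightarrow$\,(i) is not actually proved. It opens with ``if $G$ has odd order and is quadratic rational, then $\mathbb{Q}(x)$ is quadratic or $\mathbb{Q}$ for every $x$'', but that inference is precisely QR\,$\Rightarrow$\,SR, which is the nontrivial content at stake and is false for general groups ($G_2$ of Example~\ref{G2} is quadratic rational but not semi-rational); you give no odd-order-specific justification, and the remainder of the sketch (ruling out real quadratic $\mathbb{Q}(x)$, primitive roots, forcing $m\equiv -1$) is really an outline of (v)\,$\Rightarrow$\,(i). A clean direct route for (iv)\,$\Rightarrow$\,(i) in odd order is via rows, not columns: an odd-order group has no nontrivial real-valued irreducible character, so under (iv) every nontrivial $\mathbb{Q}(\chi)$ is a non-real quadratic field, i.e.\ imaginary quadratic, and \ref{E3} gives that $G$ is \cut; the paper simply cites (\cite{PV25}, Proposition 2.5) for this. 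Likewise (v)\,$\Rightarrow$\,(i) for odd order is (\cite{CD10}, Remark 13) --- your appeal to (\cite{Mah18}, Theorem 1) only characterizes odd-order \cut groups and does not show odd-order semi-rational groups are \cut; the key fact is the one you half-state, namely that no nontrivial element of an odd-order group is conjugate to its inverse, so the inversion automorphism lies outside $B_G(x)$ and the index-$\le 2$ condition forces $\Aut(\langle x\rangle)=B_G(x)\langle\tau\rangle$, which is \ref{E7}.
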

\begin{proof}
	Since for any group $G$, $\rm(i)\implies (ii)\implies (iii)\implies (iv)$ and $\rm(i)\implies (v)$. We need to prove $\rm(iv)\implies \rm(i)$ and $\rm(v)\implies (i)$.
	
	If $G$ is an abelian group, then for any divisor $d$ of $|G|$, there exists an element $x_d\in G$ of order $d$, such that $[\mathbb{Q}(x_d):\mathbb{Q}]=\phi(d)$, where $\phi$ is Euler totient function. Also, there exists an irreducible character $\chi_d$, such that $[\mathbb{Q}(\chi_d):\mathbb{Q}]=\phi(d)$. Hence, if $G$ is semi-rational or quadratic rational, we must have $\phi(d)\leq 2$, equivalently, the exponent of $G$ must be $1$, $2$, $3$, $4$ or $6$, i.e., $G$ must be inverse semi-rational.
	
	If $G$ is an odd order group, then (v)$\implies$ (i) and  (iv) $\implies$(i)  have been respectively proved in (\cite {CD10}, Remark 13) and (\cite{PV25}, Proposition 2.5).
\end{proof}
\subsection{Equivalences:}

We have detailed in Section 2, that the definition of a finite \cut group has various equivalent forms. Analogously, for the generalised classes under consideration here, we observe the equivalent conditions.  We begin by recalling some equivalent statements for finite semi-rational groups, which have been given in (\cite{CD10}, Lemma 5) and (\cite{Ten12}, Lemma 1).
\begin{proposition}\label{semi-rational}
		Let $G$ be a finite group. The following statements are equivalent.
	\begin{enumerate}
		\item[\rm{(i)}] $G$ is a semi-rational group.
		\item[\rm{(ii)}] For every $x\in G$, $[\mathbb{Q}(x) : \mathbb{Q}]\leq 2.$
		\item[\rm{(iii)}]  For every $x\in G$, $[\Aut(\langle x\rangle ):B_G(x)]\leq 2$.

	\end{enumerate}
\end{proposition}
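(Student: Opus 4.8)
The plan is to prove the three equivalences by exploiting the dictionary between conjugacy classes of powers of $x$ and Galois automorphisms acting on character values, which is exactly what underlies the characterisations \ref{E4}--\ref{E8} for \cut groups. Fix $x \in G$, let $n = |x|$, let $\zeta$ be a primitive $n$th root of unity, and identify $\Gal(\QQ(\zeta)/\QQ) \cong (\ZZ/n\ZZ)^\times \cong \Aut(\langle x\rangle)$ via $\sigma_j \colon \zeta \mapsto \zeta^j$, which sends $x \mapsto x^j$. The group $B_G(x) = N_G(\langle x\rangle)/C_G(\langle x\rangle)$ sits inside $\Aut(\langle x\rangle)$ as the subgroup of those $j$ for which $x^j \sim x$; thus $[\Aut(\langle x\rangle):B_G(x)]$ counts the distinct conjugacy classes among the generators of $\langle x\rangle$. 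With this setup, $(\mathrm{i})$ for the element $x$ says the generators of $\langle x\rangle$ lie in at most two $G$-classes, and $(\mathrm{iii})$ says $[\Aut(\langle x\rangle):B_G(x)] \le 2$; these two statements are therefore literally the same once one notes that the orbit of $x$ under $\Aut(\langle x\rangle)$ is exactly the set of generators of $\langle x\rangle$ and the $G$-classes partition this orbit into $B_G(x)$-cosets. Since this must hold for every $x$, $(\mathrm{i}) \Leftrightarrow (\mathrm{iii})$ is immediate.

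For $(\mathrm{i}) \Rightarrow (\mathrm{ii})$, suppose the generators of $\langle x\rangle$ lie in $C_x \cup C_{x^{m}}$ for some integer $m$ with $\gcd(m,n)=1$. For any $\chi \in \Irr(G)$ and any $\sigma_j \in \Gal(\QQ(\chi)(\zeta)/\QQ(\chi))$-lift with $\gcd(j,n)=1$, we have $\chi^{\sigma_j}(x) = \chi(x^j)$, and $x^j$ is a generator of $\langle x\rangle$, hence conjugate to $x$ or to $x^{m}$; therefore $\chi(x^j) \in \{\chi(x), \chi(x^{m})\}$. This shows every Galois conjugate of the number $\chi(x)$ (over $\QQ$) lies in the two-element set $\{\chi(x), \chi(x^m)\}$, so $\chi(x)$ has at most two conjugates over $\QQ$, i.e.\ $[\QQ(\chi(x)):\QQ] \le 2$. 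Running over all $\chi$ and using $\QQ(x) = \QQ(\chi(x) : \chi \in \Irr(G))$: a priori this only bounds each individual $[\QQ(\chi(x)):\QQ]$, not the compositum. The genuine content, and the step I expect to be the main obstacle, is showing that all these quadratic subfields coincide — equivalently that there is a \emph{single} $m$ that works simultaneously for all $x$, so that all the fields $\QQ(\chi(x))$ sit inside the fixed quadratic field cut out by $\sigma_m$. This is precisely the kind of argument appearing in the proof of \Cref{USR}: one uses that $x$ itself is semi-rational with parameter $m$, so $x^{m^2} \sim x$, forcing the automorphism $\sigma_m$ restricted to $\QQ(x)$ to be an involution fixing $\QQ(x) \cap (\text{fixed field})$ pointwise; then the linear independence of irreducible characters (as in \Cref{USR}, applied columnwise via $x \mapsto \chi(x)$) pins down each $\chi^{\sigma_j}$ restricted to the column of $x$ to be either $\chi$ or $\chi^{\sigma_m}$ on that column, which is exactly what forces $\QQ(x) \subseteq \QQ(\text{fixed field of }\sigma_m^2)$ of degree $\le 2$. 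So the real work is the character-linear-independence argument transplanted from rows to columns.

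For $(\mathrm{ii}) \Rightarrow (\mathrm{i})$, assume $[\QQ(x):\QQ] \le 2$ for every $x$. Fix $x$ with $n = |x|$, and let $H = \Gal(\QQ(\zeta)/\QQ(x)) \le (\ZZ/n\ZZ)^\times$, a subgroup of index $[\QQ(x):\QQ] \le 2$; concretely $H = \{ j : \chi(x^j) = \chi(x) \text{ for all } \chi \in \Irr(G)\}$. By the second orthogonality relation (columns of the character table are orthogonal, and $\chi(x^j) = \chi(x)$ for all $\chi$ forces $x^j \sim x$), we get that $j \in H$ implies $x^j \sim x$, i.e.\ $H$ maps into $B_G(x)$ under the identification $(\ZZ/n\ZZ)^\times \cong \Aut(\langle x\rangle)$. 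Since $[(\ZZ/n\ZZ)^\times : H] \le 2$, a fortiori $[\Aut(\langle x\rangle) : B_G(x)] \le 2$, which is $(\mathrm{iii})$, hence $(\mathrm{i})$ by the first paragraph. Assembling: $(\mathrm{i}) \Leftrightarrow (\mathrm{iii})$ is the orbit/coset bookkeeping, $(\mathrm{i}) \Rightarrow (\mathrm{ii})$ needs the linear-independence coherence argument, and $(\mathrm{ii}) \Rightarrow (\mathrm{i})$ is column orthogonality; this closes the cycle. I would present it as $(\mathrm{i}) \Leftrightarrow (\mathrm{iii})$ first, then $(\mathrm{i}) \Rightarrow (\mathrm{ii}) \Rightarrow (\mathrm{iii})$, citing \cite{CD10} and \cite{Ten12} for the pieces already in the literature and spelling out only the coherence step.
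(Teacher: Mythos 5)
The paper itself does not prove this proposition; it is recalled from \cite{CD10} (Lemma 5) and \cite{Ten12} (Lemma 1), so your attempt has to be judged against the standard argument, which is exactly the Galois dictionary you set up. Within that dictionary, your (i) $\Leftrightarrow$ (iii) is correct (any element conjugating one generator of $\langle x\rangle$ to another normalizes $\langle x\rangle$, so the $G$-classes of generators are precisely the $B_G(x)$-cosets in $\Aut(\langle x\rangle)$), and so is your (ii) $\Rightarrow$ (iii): $\Gal(\mathbb{Q}(\zeta_n)/\mathbb{Q}(x))\subseteq B_G(x)$ by column orthogonality, so the index of $B_G(x)$ is at most $[\mathbb{Q}(x):\mathbb{Q}]\leq 2$.

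The gap is in (i) $\Rightarrow$ (ii), which you leave as a sketched ``coherence step'', and the sketch aims at the wrong target: no single $m$ working simultaneously for all $x$ is needed (that is uniform semi-rationality, a strictly stronger property, and statement (ii) is per element anyway), linear independence of characters plays no role, and ``the fixed field of $\sigma_m^2$'' need not be quadratic. What actually closes the gap is one line: for a fixed $j$ the dichotomy is automatically uniform in $\chi$, because $x^j$ is conjugate \emph{as a group element} either to $x$ or to $x^{m}$ and characters are class functions; hence every $\sigma_j$ restricts on $\mathbb{Q}(x)$ to either the identity or $\sigma_m|_{\mathbb{Q}(x)}$, and since restriction $\Gal(\mathbb{Q}(\zeta_n)/\mathbb{Q})\to\Gal(\mathbb{Q}(x)/\mathbb{Q})$ is surjective, $[\mathbb{Q}(x):\mathbb{Q}]\leq 2$. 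Equivalently, and most cleanly: $x^j\sim x$ forces $\chi(x^j)=\chi(x)$ for all $\chi$, so $B_G(x)\subseteq\Gal(\mathbb{Q}(\zeta_n)/\mathbb{Q}(x))$, which combined with your reverse inclusion from column orthogonality gives the exact identity $[\mathbb{Q}(x):\mathbb{Q}]=[\Aut(\langle x\rangle):B_G(x)]$; this yields (ii) $\Leftrightarrow$ (iii) in one stroke and, with your orbit count, the whole proposition. So the step you single out as the main obstacle is not an obstacle at all, but as written your cycle of implications is not closed and the proposed completion would not work in the form you describe.
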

The equivalent statements for \cut groups are much more, owing to the fact that they are both semi-rational as well as  quadratic rational. We next list the properties for groups which are both semi-rational and quadratic rational. 
\begin{proposition}
	Let $G$ be a finite  group. The following statements are equivalent.
	\begin{enumerate}
		\item[\rm{(i)}] $G$ is a quadratic semi-rational group.
		\item[\rm{(ii)}] For each $x\in G$, there exists $\chi \in Irr(G)$ such that  $\mathbb{Q}(x)=\mathbb{Q}(\chi)$,  and  $[\mathbb{Q}(x) : \mathbb{Q}]\leq 2.$
		\item[\rm{(iii)}] For each $\chi\in \Irr(G)$, there exists $x \in G$ such that  $\mathbb{Q}(x)=\mathbb{Q}(\chi)$,  and  $[\mathbb{Q}(\chi) : \mathbb{Q}]\leq 2.$
		\item[\rm{(iv)}] For every  $x \in G$,  $[\Aut(\langle x\rangle ):B_G(x)]\leq 2$ and $\mathbb{Q}(x)=\mathbb{Q}(\chi)$ for some $\chi \in Irr(G)$.
		
	\end{enumerate}
\end{proposition}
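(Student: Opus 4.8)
The plan is to establish the equivalences by passing through the characterisation of semi-rationality already recorded in \Cref{semi-rational}, and through the basic fact that a quadratic rational group is exactly one in which every character field is $\mathbb{Q}$ or quadratic. Recall the definitions: $G$ is quadratic semi-rational (i.e., $G \in QSR$) precisely when $G$ is simultaneously semi-rational and quadratic rational. So (i) unpacks to: for every $x \in G$, $[\mathbb{Q}(x):\mathbb{Q}] \le 2$ (semi-rationality, via \Cref{semi-rational}(ii)), and for every $\chi \in \Irr(G)$, $[\mathbb{Q}(\chi):\mathbb{Q}] \le 2$ (quadratic rationality). The role of the extra clauses ``$\mathbb{Q}(x) = \mathbb{Q}(\chi)$ for some $\chi$'' in (ii) and ``$\mathbb{Q}(\chi) = \mathbb{Q}(x)$ for some $x$'' in (iii) is to bridge the row/column asymmetry of the character table, so the heart of the argument is showing that semi-rationality forces each column field to actually be realised as a row field, and dually.

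\textbf{Step 1: (i) $\Leftrightarrow$ (iv).} Since \Cref{semi-rational} already tells us that $[\Aut(\langle x\rangle):B_G(x)]\le 2$ for all $x$ is equivalent to $G$ being semi-rational, condition (iv) says exactly ``$G$ is semi-rational, and moreover every column field $\mathbb{Q}(x)$ coincides with some row field $\mathbb{Q}(\chi)$.'' So I must show that, given $G$ semi-rational, the condition ``$\mathbb{Q}(x)=\mathbb{Q}(\chi)$ for some $\chi\in\Irr(G)$ for every $x$'' is equivalent to $G$ being quadratic rational. One direction: if $G$ is semi-rational and each $\mathbb{Q}(x)$ equals some $\mathbb{Q}(\chi)$, then since $\mathbb{Q}(G)$ is the compositum of all the $\mathbb{Q}(\chi)$, but also the compositum of all the $\mathbb{Q}(x)$, and each $\mathbb{Q}(x)$ is at most quadratic, one can argue (using that in a semi-rational group the column fields are among a controlled list) that every $\mathbb{Q}(\chi)$ is at most quadratic — this is where I would lean on the structure, perhaps invoking that the $\mathbb{Q}(\chi)$ for $\chi$ of a fixed $|x|$ is governed by the Galois action that permutes the relevant columns. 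For the converse, if $G$ is additionally quadratic rational, one shows directly that each $\mathbb{Q}(x)$ arises as $\mathbb{Q}(\chi)$: take the column field $\mathbb{Q}(x)$, which is $\mathbb{Q}$ or quadratic; if it is $\mathbb{Q}$ use the trivial character; if quadratic, $\mathbb{Q}(x) = \mathbb{Q}(\sqrt{-d})$ or $\mathbb{Q}(\sqrt d)$ lies inside $\mathbb{Q}_G$, and one exhibits an irreducible constituent whose character field is exactly this quadratic field by a counting/Galois argument on $\Gal(\mathbb{Q}_G/\mathbb{Q})$ acting on $\Irr(G)$.

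\textbf{Step 2: (i) $\Leftrightarrow$ (ii) and (i) $\Leftrightarrow$ (iii).} Given Step 1, (ii) is just (iv) with \Cref{semi-rational}(ii) substituted for the centralizer condition — so (i) $\Leftrightarrow$ (ii) is essentially immediate once the ``$\mathbb{Q}(x)=\mathbb{Q}(\chi)$ for all $x$'' clause has been handled. For (iii), the statement is the dual: for every $\chi$ there is an $x$ with $\mathbb{Q}(\chi)=\mathbb{Q}(x)$, and $[\mathbb{Q}(\chi):\mathbb{Q}]\le2$. I would show (i) $\Rightarrow$ (iii): given $G$ quadratic semi-rational, each $\mathbb{Q}(\chi)$ is $\mathbb{Q}$ or quadratic, and one realises it as a column field by a symmetric Galois-orbit argument — the group $\Gal(\mathbb{Q}_G/\mathbb{Q})$ acts on both rows and columns, and semi-rationality bounds the stabilisers of columns while quadratic rationality bounds the stabilisers of rows, so the orbit data forces the set of row fields and the set of column fields to coincide. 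For (iii) $\Rightarrow$ (i): (iii) immediately gives $[\mathbb{Q}(\chi):\mathbb{Q}]\le2$ for all $\chi$, so $G$ is quadratic rational; and since every $\mathbb{Q}(\chi)$ equals some $\mathbb{Q}(x)$, the compositum $\mathbb{Q}(G)$ of the row fields equals the compositum of (some of the) column fields, from which — together with a short argument that every column field must then also be at most quadratic — one deduces $[\mathbb{Q}(x):\mathbb{Q}]\le 2$ for all $x$, i.e. $G$ is semi-rational by \Cref{semi-rational}.

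\textbf{The main obstacle} will be the ``bridging'' argument: showing that in a semi-rational group the condition that every column field is realised as a row field is genuinely equivalent to quadratic rationality (and the dual statement), rather than merely implied by it. The subtlety is exactly the row/column asymmetry illustrated by the examples $G_1$ (semi-rational, not quadratic rational) and $G_2$ (quadratic rational, not semi-rational) in \Cref{G1} and \Cref{G2}: in $G_1$ there are column fields ($6$ quadratic conjugacy classes) with \emph{no} matching quadratic character, and in $G_2$ the reverse. The clean way through is to work with the action of $\Gal(\mathbb{Q}_G/\mathbb{Q})$ on $\Irr(G)$ and on the conjugacy classes simultaneously, using the orthogonality relations to transfer fixed-point/orbit information between rows and columns; I expect the key technical lemma to be that for a semi-rational group, $\mathbb{Q}(\chi)$ is at most quadratic for all $\chi$ if and only if for each $x$ some irreducible character has character field exactly $\mathbb{Q}(x)$. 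Once that lemma is in hand, all four equivalences drop out by bookkeeping, invoking \Cref{semi-rational} to convert between the centralizer-index formulation and the field formulation.
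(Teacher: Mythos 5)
Your write-up is a plan rather than a proof, and the step you yourself single out as the heart of the matter is exactly the one left unproved. You defer the ``bridging'' claim (that for a semi-rational group, realising every column field as a row field is equivalent to quadratic rationality, and dually) to an expected ``key technical lemma'', and the mechanism you gesture at --- transferring orbit and fixed-point data between rows and columns of the character table via the action of $\Gal(\mathbb{Q}_G/\mathbb{Q})$ and orthogonality (i.e.\ Brauer's permutation lemma) --- cannot by itself deliver it: that lemma only gives numerical identities (for each $\sigma$, equally many fixed rows and fixed columns), and those identities are satisfied by the paper's own examples in \Cref{G1} and \Cref{G2}, where the quadratic row fields and quadratic column fields fail to match. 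Likewise, your sketch of realising a quadratic column field $\mathbb{Q}(x)$ as a character field ``by a counting/Galois argument'', and your assertion in (iii)$\Rightarrow$(i) that ``a short argument'' shows every column field is then at most quadratic, are placeholders; note that the latter is not obviously short, since $\mathbb{Q}(x)$ is a compositum of the entries $\chi(x)\in\mathbb{Q}(\chi)$ and a compositum of quadratic fields can a priori have degree $4$. Nothing in the proposal actually produces, for a given $x$ with $\mathbb{Q}(x)$ quadratic, a $\chi$ with $\mathbb{Q}(\chi)=\mathbb{Q}(x)$.

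The paper's proof rests on an elementary observation that never appears in your proposal: a quadratic extension of $\mathbb{Q}$ has no proper intermediate fields. Assuming (i), if $\mathbb{Q}(x)\neq\mathbb{Q}$ choose $\chi$ with $\chi(x)\notin\mathbb{Q}$; then $\mathbb{Q}\subsetneq\mathbb{Q}(\chi(x))\subseteq\mathbb{Q}(x)$ and $[\mathbb{Q}(x):\mathbb{Q}]\le 2$ force $\mathbb{Q}(\chi(x))=\mathbb{Q}(x)$, while $\mathbb{Q}(\chi(x))\subseteq\mathbb{Q}(\chi)$ and $[\mathbb{Q}(\chi):\mathbb{Q}]\le 2$ force $\mathbb{Q}(\chi)=\mathbb{Q}(x)$; the case $\mathbb{Q}(x)=\mathbb{Q}$ is covered by the trivial character, the symmetric argument handles characters in place of elements, and \Cref{semi-rational} converts between the field condition and $[\Aut(\langle x\rangle):B_G(x)]\le 2$, which is how the paper reaches (ii), (iii) and (iv). Your Step 1/Step 2 bookkeeping is compatible with this structure, but without the subfield observation (or a worked-out substitute) the implications you flag remain unestablished, so the proposal has a genuine gap at its central step rather than being a complete alternative route.
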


\begin{proof}
 Firstly, (i)$\implies$ (ii). 
 Let $x\in G$. If $\mathbb{Q}(x)\neq \mathbb{Q}$, then there exists $\chi\in \Irr(G)$ such that $\mathbb{Q}(\chi(x))\neq \mathbb{Q}$, but $\mathbb{Q}(\chi(x))\subseteq \mathbb{Q}(x)$ and $[\mathbb{Q}(x): \mathbb{Q}]\leq 2$.	Therefore, we must have $\mathbb{Q}(\chi(x))= \mathbb{Q}(x)$. Now, $\mathbb{Q}(\chi(x))\subseteq \mathbb{Q}(\chi)$, and again as  $[\mathbb{Q}(\chi): \mathbb{Q}]\leq 2$, we obtain that $\mathbb{Q}(\chi)= \mathbb{Q}(x)$. With similar arguments and above proposition, all the equivalent statements hold good.
\end{proof}

Uniformly semi-rational or $m$ semi-rational groups are quadratic semi-rational. In terms of group actions, we can also state the following:
\begin{proposition}\label{m_semi_rational}
	Let $G$ be a finite group. The following statements are equivalent:
	\begin{enumerate}
		\item[\rm{(i)}] $G$ is an $m$-semi-rational group.
	\item [\rm{(ii)}]	For each $x \in G$, $\Aut(\langle x \rangle) = B_G(x)\langle \tau \rangle$ where $\tau \in \Aut(\langle x \rangle)$ is the map $x\rightarrow x^m$.
		\item[\rm{(iii)}] For every $\chi \in \Irr(G)$ and for every  $\sigma \in \Gal(\mathbb{Q}(\chi)/\mathbb{Q})$, $\chi^{\sigma} = \chi$ or  $\chi^{\sigma_m}$, where \linebreak$\chi^{\sigma_m}(x)= \chi(x^m)$.
	 	\end{enumerate}
\end{proposition}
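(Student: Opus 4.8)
The plan is to prove the cycle (i) $\Rightarrow$ (ii) $\Rightarrow$ (iii) $\Rightarrow$ (i), imitating the arguments already used for the \cut case (statements \ref{E7}, \ref{E3}) and for \Cref{USR}. First I would unwind the definitions: $G$ is $m$-semi-rational means that for every $x\in G$, every generator $x^j$ of $\langle x\rangle$ (so $\gcd(j,|x|)=1$) is conjugate to either $x$ or $x^m$. The key translation between conjugacy and the quotient $B_G(x)=N_G(\langle x\rangle)/C_G(\langle x\rangle)$, viewed as a subgroup of $\Aut(\langle x\rangle)$, is the standard fact that $x^j\sim x$ in $G$ if and only if the automorphism $x\mapsto x^j$ lies in $B_G(x)$. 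Thus the set of exponents $j$ (mod $|x|$) with $x^j\sim x$ is precisely the subgroup $B_G(x)\le \Aut(\langle x\rangle)$, and $x^j\sim x^m$ corresponds to the coset $\tau_m B_G(x)$ where $\tau_m:x\mapsto x^m$. Saying every generator is conjugate to $x$ or $x^m$ is then exactly saying $\Aut(\langle x\rangle)=B_G(x)\cup \tau_m B_G(x)=B_G(x)\langle \tau\rangle$ (note $\tau$ need not have order $2$, but $\langle\tau\rangle$ meets $B_G(x)$ in a subgroup of index $\le 2$ inside $\langle B_G(x),\tau\rangle$, which forces the union to be all of $\Aut(\langle x\rangle)$); this gives (i) $\Leftrightarrow$ (ii). One should be slightly careful that $\tau_m$ is a well-defined automorphism of $\langle x\rangle$, i.e.\ $\gcd(m,|x|)=1$; this is automatic since $x^m$ generates $\langle x\rangle$ (being conjugate to the generator $x$... in the $m$-semi-rational hypothesis one does need $x^m$ a generator, which is part of the definition of semi-rationality, so I would remark on this).

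Next, (i) $\Rightarrow$ (iii): this is the character-theoretic side, and the argument is essentially that of \Cref{USR}. Take $\chi\in\Irr(G)$ and $\sigma\in\Gal(\mathbb{Q}(\chi)/\mathbb{Q})$; lift $\sigma$ to $\sigma_j\in\Gal(\mathbb{Q}(\zeta)/\mathbb{Q})$ with $\zeta$ a primitive $|G|$-th root of unity and $\gcd(j,|G|)=1$, so that $\chi^{\sigma_j}(x)=\chi(x^j)$ for all $x$. Because each $x^j$ is conjugate to $x$ or $x^m$, we get $\chi^{\sigma_j}(x)\in\{\chi(x),\chi^{\sigma_m}(x)\}$ pointwise, where $\chi^{\sigma_m}(x)=\chi(x^m)$. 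The semi-rationality relation $x^{m^2}\sim x$ (valid for every $x$, since applying the hypothesis twice and noting the only options land back on $x$ or $x^m$ — actually one gets $x^{m^2}\sim x^m$ or $x^{m^2}\sim x$, and combined with $x^m\sim x$-or-$x^m$ one deduces $\langle\tau\rangle$-orbit structure; the clean statement is $\tau_m^2\in B_G(x)\langle\tau\rangle$ gives $x^{m^2}\sim x$ or $x^m$) shows $\chi^{\sigma_m}$ is again an irreducible character with the same two-valued behaviour, so that $\chi+\chi^{\sigma_m}=\chi^{\sigma_j}+\chi^{\sigma_j\sigma_m}$ as class functions; by linear independence of $\Irr(G)$ we conclude $\chi^{\sigma_j}\in\{\chi,\chi^{\sigma_m}\}$, i.e.\ $\chi^\sigma=\chi$ or $\chi^{\sigma_m}$. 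Finally (iii) $\Rightarrow$ (i): fix $x\in G$ and $j$ with $\gcd(j,|x|)=1$; extend to $\gcd(j,|G|)=1$ and consider $\sigma_j$ restricted to $\mathbb{Q}(\chi)$ for each $\chi$. By (iii), $\chi(x^j)=\chi^{\sigma_j}(x)$ equals $\chi(x)$ or $\chi(x^m)$ for each $\chi$; the class sums argument (or the fact that conjugacy classes are separated by irreducible characters) then shows $x^j$ and $x$ have, for every $\chi$, $\chi(x^j)$ agreeing with one of $\chi(x),\chi(x^m)$ — to conclude $x^j\sim x$ or $x^j\sim x^m$ one needs that the "choice" of which of the two values occurs is uniform across all $\chi$. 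That uniformity is where a little more care is needed.

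The main obstacle I anticipate is exactly this last uniformity point in (iii) $\Rightarrow$ (i): knowing that for each individual $\chi$ the value $\chi(x^j)$ is $\chi(x)$ or $\chi(x^m)$ does not a priori mean $x^j$ is conjugate to a single one of $x,x^m$. The resolution is to package it through the $\mathbb{Q}$-class / Galois-orbit structure: the map $j\mapsto\sigma_j|_{\mathbb{Q}(x)}$ and the hypothesis force the Galois orbit of the column of $x$ to consist of at most the two columns of $x$ and $x^m$, hence (columns being determined by the character values) $x^j$'s column coincides with that of $x$ or of $x^m$, and two elements with identical columns in the character table are conjugate. I would carry this out by invoking the standard duality between $\Gal(\mathbb{Q}(\zeta)/\mathbb{Q})$-action on rows and on columns of the character table (the same machinery underlying the equivalence of \ref{E3} and \ref{E8}), so the "pointwise to uniform" upgrade is really the statement that $\sigma_j$ permutes columns, not entries within a row independently. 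With that observation the cycle closes and \Cref{m_semi_rational} follows.
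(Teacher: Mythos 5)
Since the paper states this proposition without proof, I am judging your proposal on its own terms. Your direction (i) $\Rightarrow$ (iii) is fine: it is exactly the argument of \Cref{USR}, and the point you hedge on ($x^{m^2}\sim x$) does follow from (i), because $x^{m^2}\sim x^m$ would force $\sigma_m\in B_G(x)$ and hence $x^{m^2}\sim x^m\sim x$ anyway. The two converses, however, both contain genuine gaps. For (ii) $\Rightarrow$ (i) you need $B_G(x)\langle\tau\rangle=B_G(x)\cup B_G(x)\tau$, i.e.\ $\tau^2\in B_G(x)$; your parenthetical claim that $\langle\tau\rangle$ automatically meets $B_G(x)$ in index at most $2$ inside $\langle B_G(x),\tau\rangle$ is unjustified, and it is false in general. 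Take $G=C_7$ and $m=3$: for a generator $x$ one has $B_G(x)=1$ while $\tau\colon x\mapsto x^3$ generates $\Aut(\langle x\rangle)$, so the factorisation in (ii) holds, yet $x^2$ is conjugate to neither $x$ nor $x^3$, so (i) and (iii) both fail. Hence (ii) as literally written cannot imply (i); the equivalence only works if (ii) is read as the two-coset decomposition $\Aut(\langle x\rangle)=B_G(x)\cup B_G(x)\tau$ (equivalently, if $\tau^2\in B_G(x)$ is added), which is what (i) actually delivers in the forward direction. Your cycle breaks at this step, and no rearrangement of the cycle avoids it as long as (ii) is used as a hypothesis in its literal form.

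The second gap is the uniformity step in (iii) $\Rightarrow$ (i), which you correctly identify but do not close. Appealing to a ``standard duality'' between the Galois actions on rows and columns is not an argument: what is formally available is only that $\sigma_j$ permutes columns, together with Brauer's permutation lemma (equal numbers of fixed rows and fixed columns), and this alone never converts a condition on row orbits into one on column orbits --- the paper's \Cref{G1} and \Cref{G2} show precisely that such row and column conditions (quadratic rational versus semi-rational) are inequivalent, so your assertion that ``the hypothesis forces the Galois orbit of the column of $x$ to consist of at most the two columns of $x$ and $x^m$'' is exactly what must be proved. A workable route: (iii) forces every $\mathbb{Q}(\chi)$ to have degree at most $2$ over $\mathbb{Q}$ with $\sigma_m$ acting nontrivially whenever the degree is $2$; hence $\sigma_{m^2}$ fixes $\mathbb{Q}(G)$, so $x^{m^2}\sim x$ for every $x$ and $\Gal(\mathbb{Q}(G)/\mathbb{Q})$ has exponent $2$. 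Now fix $x$ and $j$, and apply Brauer's permutation lemma to all elements of the subgroup of $\Gal(\mathbb{Q}(G)/\mathbb{Q})$ generated by the images of $\sigma_j$ and $\sigma_m$ (the case where this subgroup has order at most $2$ being immediate): hypothesis (iii) says every row stabilizer is the whole group or one of the two index-two subgroups avoiding the image of $\sigma_m$, and the resulting fixed-point equations force the number of columns with trivial stabilizer, or with stabilizer generated by the image of $\sigma_m$, to be zero, which is exactly the statement $x^j\sim x$ or $x^j\sim x^m$. Alternatively one can invoke the corresponding results of \cite{dRV24}; but without some such counting argument the implication (iii) $\Rightarrow$ (i) remains unproved in your proposal.
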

%
%
%
%
\subsection{Subgroups and quotient groups:} 
We now study these classes for closures with respect to subgroups, quotient groups etc. Clearly, none of these can be expected to be subgroup closed, owing to the fact that all finite groups are subgroups of symmetric groups, which are known to be rational. Like \cut groups, we shall study the closure of these classes for the centre, Sylow subgroups, quotient groups and direct products.

\begin{proposition}\label{centre_quotient_closed_all}
	Let $\mathfrak{X}\in \{\mathbb{Q}~groups, ISR,USR,QSR,SR,QR\}$. If $G\in \mathfrak{X}$, then so does $\mathcal{Z}(G)$ and any homomorphic image of $G$.
\end{proposition}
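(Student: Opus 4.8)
The plan is to prove the two closure properties—under passing to the centre and under homomorphic images—uniformly for all six classes by reducing everything to the field-theoretic descriptions already recorded in the excerpt. The key observation is that each class $\mathfrak{X}$ in the list is defined by a condition of the shape ``for every $x\in G$ (resp. every $\chi\in\Irr(G)$), a certain degree bound or conjugacy-type condition holds'', and in each case the condition is inherited by subgroups of a very specific kind, namely those arising as centres or as images under a surjection. I would treat the homomorphic-image case first, since it is the cleaner of the two, and then deduce the centre case partly from it.

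First I would handle homomorphic images. If $N\trianglelefteq G$ and $\overline{G}=G/N$, then $\Irr(\overline{G})$ is canonically a subset of $\Irr(G)$ (characters with $N$ in their kernel), and for $\bar x=xN$ the values $\chi(\bar x)$ for $\chi\in\Irr(\overline G)$ are among the values $\chi(x)$ for $\chi\in\Irr(G)$; hence $\mathbb{Q}(\bar\chi)\subseteq\mathbb{Q}(\chi)$ and $\mathbb{Q}(\bar x)\subseteq\mathbb{Q}(x)$, where the latter field is computed in $\overline G$ using only characters inflated from $\overline G$. For $\mathbb{Q}$ groups, $QR$ (via \ref{E3}-type field conditions), and for $ISR$ (via \ref{E8}) the defining condition is ``$\mathbb{Q}(x)$ or $\mathbb{Q}(\chi)$ lies in $\{\mathbb{Q}\}$ or $\{\mathbb{Q}(\sqrt{-d})\}$'', which is clearly inherited by subfields. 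For $SR$ the condition $[\mathbb{Q}(x):\mathbb{Q}]\le 2$ (Proposition~\ref{semi-rational}(ii)) is likewise inherited. For $QSR = SR\cap QR$ the claim follows by combining the two. The only class needing a touch more care is $USR$: here I would argue directly from conjugacy classes—if $G$ is $m$-semi-rational then every generator of $\langle\bar x\rangle$ is the image of a generator of $\langle x\rangle$ for a suitable lift $x$, hence lies in $C_{\bar x}\cup C_{\bar x^m}$, so $\overline G$ is $m$-semi-rational with the same $m$; the same argument specialises to $m=-1$ to re-derive the $ISR$ (``homomorphic images of \cut groups are \cut'') remark already noted in the excerpt after \ref{E3}.

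Next, for the centre: since $\mathcal{Z}(G)$ is abelian, I would invoke Proposition~\ref{abelian_same}, which says that for an abelian group all six classes coincide, so it suffices to show $\mathcal{Z}(G)\in ISR$ whenever $G\in\mathfrak{X}$ for the weakest of the six conditions, and since $ISR$ is the smallest of the classes it in fact suffices to show $\mathcal Z(G)$ is \cut whenever $G$ lies in the \emph{largest} class that still forces the needed bound. Concretely: membership in any of the six classes implies $[\mathbb{Q}(x):\mathbb{Q}]\le 2$ for every $x\in G$ (this is exactly $SR$, and $QSR\subseteq SR$, while $QR$ and $ISR$ also give a degree-two bound on $\mathbb{Q}(x)$ via the row/column comparison, with $USR\subseteq QR$ by Proposition~\ref{USR}). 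For $x\in\mathcal Z(G)$, the field $\mathbb{Q}(x)$ computed inside $G$ already contains $\mathbb{Q}(\zeta_{|x|})$, because a central element of order $n$ has a linear character of $G$ on which it acts by a primitive $n$-th root of unity; hence $[\mathbb{Q}(\zeta_{|x|}):\mathbb{Q}]=\phi(|x|)\le 2$, forcing $|x|\in\{1,2,3,4,6\}$. Therefore $\mathcal Z(G)$ has exponent dividing $4$ or $6$, so it is \cut by Remark~\ref{abelian}, and then by Proposition~\ref{abelian_same} it lies in every one of the six classes, in particular in $\mathfrak X$.

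The main obstacle, such as it is, is bookkeeping: one must be careful that for a quotient $\overline G$ the field $\mathbb{Q}(\bar x)$ is genuinely a subfield of $\mathbb{Q}(x)$ (true, since $\Irr(\overline G)\subseteq\Irr(G)$ via inflation), and that for the centre the inequality goes the right way—namely that $\mathbb{Q}(\zeta_{|x|})\subseteq\mathbb{Q}(x)$ for central $x$, which needs the existence of a faithful-enough linear character on $\langle x\rangle$ extending to (or at least a character of) $G$; this is where one uses centrality, via the fact that $G/\mathrm{C}_G(x)=1$ so $x$ generates its own $G$-conjugacy class and the Galois-conjugate values $\chi(x^j)$ already sit in $\mathbb{Q}(x)$. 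Once these two containments are in hand, each of the six cases is a one-line verification against the relevant defining condition from Section~2, and the proposition follows. I would close by remarking that the $USR$ case preserves the parameter $m$, so in fact ``$m$-semi-rational'' is itself quotient-closed and centre-reducing-to-\cut, which is the natural refinement.
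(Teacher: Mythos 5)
Your treatment of homomorphic images is fine and essentially the paper's: field conditions pass to quotients because $\Irr(G/N)$ inflates into $\Irr(G)$, and the $SR/QSR/USR$ cases go through conjugacy of lifted generators (both you and the paper quietly use that a generator $\bar x^{\,j}$ of $\langle\bar x\rangle$ can be lifted to a generator of $\langle x\rangle$, i.e.\ $j$ can be adjusted modulo $|\bar x|$ to be coprime to $|x|$; that is standard and harmless).

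The centre half has a genuine gap, in two places. First, your blanket claim that membership in any of the six classes forces $[\mathbb{Q}(x):\mathbb{Q}]\le 2$ for every $x\in G$ is false for $\mathfrak X=QR$: quadratic rationality bounds the rows of the character table, not the columns, and the paper's own Example~\ref{G2} is a $QR$ group that is not semi-rational, so some column field has degree larger than $2$. Your attempted repair for central elements is also not correct as stated: a central element of order $n$ need not be detected by a linear character of $G$ (take $-1\in Q_8$, which lies in $G'$), so you cannot get a primitive $n$-th root of unity into $\mathbb{Q}(x)$ that way, and even granting $\mathbb{Q}(\zeta_{|x|})\subseteq\mathbb{Q}(x)$ you still lack the degree-two bound on $\mathbb{Q}(x)$ in the $QR$ case. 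The correct route (and the paper's, once its wording is read charitably) is character-theoretic on the rows: for central $x$, every $\chi\in\Irr(G)$ satisfies $\chi(x)=\chi(1)\omega_\chi(x)$ with $\omega_\chi(x)\in\mathbb{Q}(\chi)$ a root of unity, and since every linear character of the central subgroup $\langle x\rangle$ lies under some irreducible character of $G$, one can choose $\chi$ with $\omega_\chi(x)$ a \emph{primitive} $|x|$-th root of unity; then $[\mathbb{Q}(\chi):\mathbb{Q}]\le 2$ forces $\phi(|x|)\le 2$, i.e.\ $|x|\in\{1,2,3,4,6\}$. Second, your final reduction "show $\mathcal Z(G)$ is \cut and invoke Proposition~\ref{abelian_same}" does not cover $\mathfrak X=\mathbb{Q}$ groups: Proposition~\ref{abelian_same} equates only the five classes $ISR$, $USR$, $QSR$, $QR$, $SR$ for abelian groups, and an abelian \cut group (e.g.\ $C_3$) is not rational. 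For rational $G$ you must run the argument with the exact condition $\mathbb{Q}(x)=\mathbb{Q}$ (equivalently $x^j\sim x$ for all $j$ coprime to $|x|$), which for central $x$ forces $|x|\le 2$ and gives that $\mathcal Z(G)$ is elementary abelian $2$-group; this is how the paper treats that case separately.
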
 

\begin{proof}
	Suppose first $\mathfrak{X}=\mathbb{Q} ~$groups. If $G\in \mathfrak{X}$, i.e., $G$ is a rational group, then any $x\in\mathcal{Z}(G)$, satisfies $x^j\sim x$, for all $j$ such that $\gcd(j,|x|)=1$. Since $x$ is a central element, this condition implies that $|x|\leq 2$. In particular,   $\mathcal{Z}(G)$ is an elementary abelian $2$-group, which is clearly a rational group.
	
	Likewise, if $\mathfrak{X}\in \{ISR,USR,QSR,SR\}$ and $G\in \mathfrak{X}$, then for any $x\in\mathcal{Z}(G)$, $x$ satisfies that for any $j$ with $\gcd(j,|x|)=1$, either $x^j\sim x$ or $x^j\sim x^{m_x}$, for some $m_x\in \mathbb{Z}$. This yields $|x|\in \{1,2,3,4,6\}$. Hence, $\mathcal{Z}(G)\in \mathfrak{X}$, in view of Proposition \ref{abelian_same}.
	
	Now, if $\mathfrak{X}=QR$ and $G\in \mathfrak{X}$, i.e, $G$ is a quadratic rational group, then for $x\in \mathcal{Z}(G)$, for any $\chi\in \Irr(G)$, we have that $\chi(x)=\zeta$, where $\zeta$ is $|x|^\mathrm{th}$ root of unity. Since, $\chi(x)=\zeta\in \mathbb{Q}(\chi)$ and $G$ being quadratic, $[\mathbb{Q}(\chi):\mathbb{Q}]\leq 2$. Therefore, we must have $|x|\in\{1,2,3,4,6\}$ and hence again by Proposition \ref{abelian_same}, $\mathcal{Z}(G)$ must be quadratic rational group.  
	
	If $N\unlhd G$, then every irreducible character of $G/N$ may be viewed as an irreducible character of $G$. If $G\in \mathfrak{X}$, where $\mathfrak{X}\in \{\mathbb{Q},QR, ISR\}$, then the condition on possible character fields of $G$ coming from the fact that $G\in \mathfrak{X}$ ensures that $G/N\in \mathfrak{X}$.
	
	Now, if $\mathfrak{X}\in \{SR,QSR,USR\}$, then for any $\overline{x}=xN\in G/N$, consider $j$ such that $\gcd(j,|x|)=1$. Then there exist $g\in G$ such that $g^{-1}x^jg=x$ or $x^m$, where $m\in \mathbb{Z}$. Hence, $\overline{g}^{-1}\overline{x^j}\,\overline{g}=\overline{x}$ or $\overline{x^m}$, i.e., $\overline{x}^j\sim \overline{x}$ or $\overline{x}^m$ in $G/N$. Therefor, $G/N\in \mathfrak{X}$, if $G\in \mathfrak{X}$. 
\end{proof}

The quotient closed property for these classes along with \Cref{abelian_same} and (\cite{BCJM21}, Theorem 4.1) yields next result, which extends  \Cref{abelian_same} by including nilpotent groups of class $2$. 

\begin{theorem}\label{same_nilpotent2_odd}If $G$ is a nilpotent group of class at most $2$, or a group of odd order, then the following statements are equivalent:
	\begin{enumerate}
	\item[\rm{(i)}] $G$ is inverse semi-rational.
	\item[\rm{(ii)}]  $G$ is uniformly semi-rational.
	\item[\rm{(iii)}] $G$ is quadratic semi-rational
	\item[\rm{(iv)}] $G$ is quadratic rational.
	\item[\rm{(v)}] $G$ is semi-rational.
	
\end{enumerate}
\begin{remark}It is difficult to say if above theorem may be further extended to bigger classes.\begin{enumerate}
		\item[\rm{(i)}] \Cref{same_nilpotent2_odd} holds no longer for nilpotency class $3$. For instance, groups $G_1$ and $G_2$ in Examples \ref{G1} and \ref{G2} serve as counter examples.
		\item[\rm{(ii)}] As pointed in (\cite{BCJM21}, Remark 4.2), \texttt{SmallGroup(81,8)} fails to be inverse semi-rational though it satisfies the hypothesis of (\cite{BCJM21}, Theorem 4.1). This group even fails to be semi-rational or quadratic rational.    
	\end{enumerate}

\end{remark}
\end{theorem}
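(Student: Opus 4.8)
The plan is to prove the chain of implications $\mathrm{(i)}\Rightarrow\mathrm{(ii)}\Rightarrow\mathrm{(iii)}\Rightarrow\mathrm{(iv)}$ and $\mathrm{(i)}\Rightarrow\mathrm{(v)}$, which hold for \emph{every} finite group (the first two by \Cref{USR} and the definition of quadratic semi-rational, the rest trivially), and then to close the loop by showing $\mathrm{(iv)}\Rightarrow\mathrm{(i)}$ and $\mathrm{(v)}\Rightarrow\mathrm{(i)}$ under the extra hypothesis that $G$ is nilpotent of class at most $2$ or of odd order. The odd-order case is already done in \Cref{abelian_same}, so no work is needed there; the entire content is the nilpotency-class-$2$ case.

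For the class-$2$ case I would reduce both $\mathrm{(iv)}\Rightarrow\mathrm{(i)}$ and $\mathrm{(v)}\Rightarrow\mathrm{(i)}$ to a single statement about quotients. First observe that by \Cref{centre_quotient_closed_all}, if $G$ is semi-rational (resp.\ quadratic rational) then so is every homomorphic image $G/N$; and every quotient of a nilpotent class-$\leq 2$ group is again nilpotent of class $\leq 2$. In particular $\mathcal{Z}(G/N)$ is an abelian group which is a quotient of a semi-rational (resp.\ quadratic rational) group, hence by \Cref{centre_quotient_closed_all} and \Cref{abelian_same} it is inverse semi-rational, i.e.\ its exponent divides $4$ or $6$. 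Thus the hypothesis forces $\mathcal{Z}(G/N)$ to have exponent $1,2,3,4$ or $6$ for every normal subgroup $N$ of $G$. Now invoke the classification criterion (\cite{BCJM21}, Theorem 4.1), which says precisely that a group $G$ of nilpotency class at most $2$ is inverse semi-rational if and only if $\mathcal{Z}(G/N)$ has exponent $1,2,3,4$ or $6$ for every quotient $G/N$. This yields $\mathrm{(iv)}\Rightarrow\mathrm{(i)}$ and $\mathrm{(v)}\Rightarrow\mathrm{(i)}$ simultaneously, completing the equivalence.

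The main obstacle, and the step that requires genuine care rather than a citation, is verifying that the exponent bound on $\mathcal{Z}(G/N)$ really does follow from $G$ being merely semi-rational or quadratic rational, as opposed to inverse semi-rational. The point is that $\mathcal{Z}(G/N)$ is abelian, and for abelian groups the \Cref{abelian_same} collapse shows that the a priori weaker conditions (semi-rational, quadratic rational) already force exponent dividing $4$ or $6$ — but one must be slightly careful that $\mathcal{Z}(G/N)$ genuinely inherits the relevant property from $G$. This is exactly what the centre-closure and quotient-closure parts of \Cref{centre_quotient_closed_all} provide: $\mathcal{Z}(G/N)$ is the centre of a quotient of $G$, hence lies in the same class $\mathfrak{X}\in\{SR,QR\}$ as $G$. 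So the argument is: class membership passes $G\rightsquigarrow G/N\rightsquigarrow \mathcal{Z}(G/N)$, then abelianness upgrades it to inverse semi-rational via \Cref{abelian_same}, then the structural criterion of \cite{BCJM21} lifts it back to $G$.

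For the accompanying remark, the counterexamples are immediate: the groups $G_1$ and $G_2$ of Examples~\ref{G1} and \ref{G2} are nilpotent of class $3$, and one is semi-rational but not inverse semi-rational while the other is quadratic rational but not inverse semi-rational, so $\mathrm{(v)}\not\Rightarrow\mathrm{(i)}$ and $\mathrm{(iv)}\not\Rightarrow\mathrm{(i)}$ already at class $3$; for the second part one simply cites (\cite{BCJM21}, Remark 4.2) that \texttt{SmallGroup(81,8)} satisfies the hypothesis of (\cite{BCJM21}, Theorem 4.1) yet fails even semi-rationality and quadratic rationality, showing the theorem's hypothesis of class $\leq 2$ cannot be dropped in favour of the weaker centre-exponent condition on $G$ alone.
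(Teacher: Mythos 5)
Your proposal is correct and follows essentially the same route the paper intends: reduce the odd-order case to \Cref{abelian_same}, and for class at most $2$ combine the quotient- and centre-closure of \Cref{centre_quotient_closed_all} with \Cref{abelian_same} to force $\mathcal{Z}(G/N)$ to have exponent dividing $4$ or $6$, then invoke (\cite{BCJM21}, Theorem 4.1). Your treatment of the remark (the class-$3$ counterexamples $G_1$, $G_2$ and \texttt{SmallGroup(81,8)}) also matches the paper's.
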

 Before proceeding further, we first study these classes with respect to the direct products.
\subsection{Direct products:}

It is easy to observe that the class of $\mathbb{Q}$ groups is closed with respect to direct products. If $G\in ISR$, we have \Cref{cut_direct}. We now see that the situation is similar for other classes as well.
\begin{proposition} 	Let $\mathfrak{X}\in \{USR,QSR,SR,QR\}$ and let $G_1,G_2\in \mathfrak{X}$. Then
	the direct product $G_1\times G_2\in \mathfrak{X}$,  if and only if, one of the following holds:
	\begin{enumerate}
		\item[\rm{(i)}] Either $G_1$ or $G_2$ is rational.
		\item[\rm{(ii)}] $\mathbb{Q}(G_1)=\mathbb{Q}(G_2)=\mathbb{Q}(\sqrt{d})$, for some $d\in\mathbb{Z}*$, where $\mathbb{Z}*$ denotes the set of square free integers.
	\end{enumerate}
	
\end{proposition}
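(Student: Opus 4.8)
The plan is to mimic the argument given for \Cref{cut_direct}, isolating the one ingredient that was special to the inverse semi-rational case — namely that two imaginary quadratic fields generate a quadratic field only when they coincide — and replacing it by the analogous (easier) fact that any compositum of quadratic fields that is itself quadratic forces equality. For the ``only if'' direction I would start from $G_1\times G_2\in\mathfrak X$, note that each $G_i$ inherits membership in $\mathfrak X$ by \Cref{centre_quotient_closed_all} (each $G_i$ is a quotient of $G_1\times G_2$), and then translate the defining property of $\mathfrak X$ into a statement about the fields $\mathbb Q(x)$ (for $\mathfrak X\in\{USR,QSR,SR\}$) or $\mathbb Q(\chi)$ (for $\mathfrak X = QR$): in every case the relevant field is either $\mathbb Q$ or quadratic. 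Suppose neither $G_1$ nor $G_2$ is rational; then there are $\chi_1\in\Irr(G_1)$, $\chi_2\in\Irr(G_2)$ with $\mathbb Q(\chi_i)=\mathbb Q(\sqrt{d_i})$, $d_i\in\mathbb Z^*$, and since $\chi_1\times\chi_2\in\Irr(G_1\times G_2)$ one has $\mathbb Q(\sqrt{d_1},\sqrt{d_2})\subseteq\mathbb Q(\chi_1\times\chi_2)$, which by membership in $\mathfrak X$ is at most quadratic; hence $\mathbb Q(\sqrt{d_1})=\mathbb Q(\sqrt{d_2})$, i.e. $d_1=d_2=:d$ in $\mathbb Z^*$. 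Running the same comparison against an arbitrary non-rational $\chi_1'\in\Irr(G_1)$ forces $\mathbb Q(\chi_1')=\mathbb Q(\sqrt d)$ too, so $\mathbb Q(G_1)=\mathbb Q(\sqrt d)$, and symmetrically $\mathbb Q(G_2)=\mathbb Q(\sqrt d)$; this is alternative (ii).

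For the cases $\mathfrak X\in\{SR,USR,QSR\}$, where the hypothesis is phrased in terms of the columns $\mathbb Q(x)$ rather than the rows $\mathbb Q(\chi)$, I would run the mirror-image argument: if neither factor is rational, pick non-rational elements $x_1\in G_1$, $x_2\in G_2$, observe $\mathbb Q((x_1,x_2))=\mathbb Q(x_1)\mathbb Q(x_2)$ inside the relevant cyclotomic field, and conclude as above that $\mathbb Q(x_1)=\mathbb Q(x_2)$ and then that all non-rational columns of $G_1$ (resp.\ $G_2$) give the same quadratic field $\mathbb Q(\sqrt d)$. A small point to check here is that $\mathbb Q((x_1,x_2))$ really is the compositum $\mathbb Q(x_1)\mathbb Q(x_2)$: this follows because every irreducible character of $G_1\times G_2$ is an external product $\chi_1\times\chi_2$ and its value at $(x_1,x_2)$ is $\chi_1(x_1)\chi_2(x_2)$, so the field generated by the values in that column is exactly the compositum. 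For $USR$ and $QSR$ one must additionally verify that conditions (i)/(ii) are not merely necessary but also \emph{sufficient} for the stronger properties (uniform semi-rationality, and simultaneous semi-rationality and quadratic rationality), and that the ``$m$'' can be chosen consistently across the product; in case (i), where one factor is rational, the product is semi-rational/uniformly semi-rational with the same $m$ as the non-rational factor, and in case (ii) one checks that $m=-1$ works throughout using (E7)-type descriptions (\Cref{m_semi_rational}).

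For the ``if'' direction the verification is routine in each class: in case (i), if say $G_2$ is rational then for any $(x_1,x_2)\in G_1\times G_2$ and any $\chi_1\times\chi_2$, the values lie in $\mathbb Q(\chi_1)$, so $\mathbb Q(G_1\times G_2)=\mathbb Q(G_1)$ and the membership of $G_1$ in $\mathfrak X$ transfers directly (and the conjugacy-class conditions transfer because a generator of $\langle(x_1,x_2)\rangle$ projects to generators of $\langle x_1\rangle$ and $\langle x_2\rangle$, the latter being rational); in case (ii), $\mathbb Q(G_1\times G_2)=\mathbb Q(G_1)\mathbb Q(G_2)=\mathbb Q(\sqrt d)$ is still at most quadratic, giving $QR$, and the uniform/semi-rational statements follow from the $m=-1$ consistency noted above, so in fact $G_1\times G_2\in ISR\subseteq$ each of the larger classes. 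The main obstacle I anticipate is not the field-theoretic core — that is essentially \Cref{cut_direct} verbatim — but the bookkeeping required to show sufficiency simultaneously for all four classes and, in particular, to pin down which $m$ witnesses uniform semi-rationality of the product; here one should be careful that a product of an $m_1$-semi-rational group and an $m_2$-semi-rational group need not be uniformly semi-rational unless the $m_i$ can be reconciled, which is precisely why (ii) collapses to the imaginary-quadratic situation and $m=-1$.
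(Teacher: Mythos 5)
Your necessity direction and the $QR$ case match the paper's argument essentially verbatim (compare characters via $\chi_1\times\chi_2$, elements via $(x_1,x_2)$, and use that a quadratic field containing a compositum of quadratic fields forces them to coincide), and that part is fine. The genuine gap is in your sufficiency argument under alternative (ii) for the classes $SR$, $USR$ and $QSR$: you claim that ``$m=-1$ works throughout'' and that in fact $G_1\times G_2\in ISR$, so membership in the larger classes follows. This rests on a misreading of (ii): there $d$ ranges over \emph{all} square-free integers, so $\mathbb{Q}(\sqrt{d})$ may be a real quadratic field, and then the product cannot possibly be inverse semi-rational (an $ISR$ group has every character field $\mathbb{Q}$ or imaginary quadratic). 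A concrete counterexample to your claim is $G_1=G_2=D_{10}$, the dihedral group of order $10$, which the paper already notes is $2$-semi-rational but not inverse semi-rational, with $\mathbb{Q}(G_i)=\mathbb{Q}(\sqrt{5})$: condition (ii) holds with $d=5$, and the proposition asserts $D_{10}\times D_{10}\in USR$, but for $x=(r,1)$ with $r$ of order $5$ the generator $(r^2,1)$ is conjugate to neither $(r,1)$ nor $(r,1)^{-1}$, so $m=-1$ is not a witness and the product is not $ISR$. Hence the ``(ii) collapses to the imaginary-quadratic situation'' step would fail, and with it your deduction of semi-rationality and uniform semi-rationality of the product.

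The repair is what the paper does: for $\mathfrak{X}=SR$ under (ii), argue directly that $\mathbb{Q}\big((x_1,x_2)\big)=\mathbb{Q}(x_1)\mathbb{Q}(x_2)\subseteq\mathbb{Q}(\sqrt{d})$ has degree at most $2$ and invoke \Cref{semi-rational}; for $\mathfrak{X}=USR$, observe that every irreducible character of $G_1\times G_2$ then has field $\mathbb{Q}$ or $\mathbb{Q}(\sqrt{d})$ and conclude via the characterization in \Cref{m_semi_rational}, which produces a single admissible $m$ that in general is not $-1$ (one must reconcile the exponents of $m_1$ and $m_2$, exactly the bookkeeping you flagged but then bypassed); the case $\mathfrak{X}=QSR$ follows since $QSR=SR\cap QR$. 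Your case (i) sufficiency is acceptable at the same level of detail as the paper's (``clearly'').
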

\begin{proof} The statement holds for $\mathfrak{X}=QSR$, if it holds for both $\mathfrak{X}=QR$ as well as $\mathfrak{X}=SR$. Thus, we prove the result for these two cases. 
	
	Let $\mathfrak{X}=QR$ and let $G_1,G_2 \in \mathfrak{X}$ be such that $G_1\times G_2\in \mathfrak{X}$. Now, if (i) does not hold, i.e., none of $G_1$ or $G_2$ is rational, then there exist $\chi_1\in \Irr(G_1)$ and $\chi_2\in \Irr(G_2)$ such that $\mathbb{Q}(\chi_1)=\mathbb{Q}(\sqrt{d_1})$ and $\mathbb{Q}(\chi_2)=\mathbb{Q}(\sqrt{d_2})$ for some $d_1,d_2\in \mathbb{Z}*$. Since $\chi_1\times \chi_2\in \Irr(G_1\times G_2)$, we have that $\mathbb{Q}(\sqrt{d_1},\sqrt{d_2})\subseteq \mathbb{Q}(\chi_1\times \chi_2)$, but $G_1\times G_2$ being a quadratic rational group implies that $\mathbb{Q}(\chi_1\times \chi_2)$ must be $\mathbb{Q}(\sqrt{d})$ for some $d\in \mathbb{Z}_*.$ Therefore, we must have $d_1=d_2=d$. Further, if $\chi_1'$ is any character of $G_1$ such that $\mathbb{Q}(\chi_1')=\mathbb{Q}(\sqrt{{d_1}'})$, ${d_1}'\in \mathbb{Z}_*$, then ${d_1}'=d.$ Thus, $\mathbb{Q}(G_1)=\mathbb{Q}(\sqrt{d})$, and similarly $\mathbb{Q}(G_2)=\mathbb{Q}(\sqrt{d})$. Consequently, $\mathbb{Q}(G_1)=\mathbb{Q}(G_2)=\mathbb{Q}(\sqrt{d})=\mathbb{Q}(G_1 \times G_2).$ 
	
	Next, if $\mathfrak{X}=SR$  and $G_1,G_2 \in \mathfrak{X}$. Let $G_1\times G_2\in \mathfrak{X}$. If (i) does not hold, then there exist $x_1\in G_1$ and $x_2\in G_2$ such that $\mathbb{Q}(x_1)=\mathbb{Q}(\sqrt{d_1})$ and $\mathbb{Q}(x_2)=\mathbb{Q}(\sqrt{d_2})$ for some $d_1,d_2\in \mathbb{Z}*$. As  $\mathbb{Q}(\sqrt{d_1},\sqrt{d_2})\subseteq \mathbb{Q}((x_1,x_2))$,  which implies that $\mathbb{Q}((x_1,x_2))$ must be of the form $\mathbb{Q}(\sqrt{d})$ for some $d\in \mathbb{Z}*$, with $d=d_1=d_2$, $G_1 \times G_2$ being semi-rational group. Consequently, $\mathbb{Q}(G_1)=\mathbb{Q}(G_2)=\mathbb{Q}(\sqrt{d})=\mathbb{Q}(G_1 \times G_2),$ i.e., (ii) holds. 
	
		It is easy to see that if (i) or (ii) holds, then $G_1 \times G_2\in \mathfrak{X}$, in both cases when $\mathfrak{X}=SR$ or $QR$.
		
	Now, let $\mathfrak{X}=USR$  and $G_1, G_2 \in \mathfrak{X}$. If $G_1\times G_2\in \mathfrak{X}$, then $G_1\times G_2$ being semi-rational, either (i) or (ii) holds. For the converse, if (i) holds, $G_1\times G_2$ is clearly uniformly semi-rational, and if (ii) holds, then $\mathbb{Q}(G_1\times G_2)=\mathbb{Q}(\sqrt{d})$, so that every irreducible character of  $G_1\times G_2$ has character field $\mathbb{Q}$ or $\mathbb{Q}(\sqrt{d})$. Hence, by \Cref{m_semi_rational}, $G_1\times G_2 \in \mathfrak{X}$. 
	
\end{proof}

Note that the case $\mathfrak{X}=USR$, of above theorem is proved in (\cite{dRV24}, Proposition 3.15), but we have provided a generic proof.\\
 
The following now follows inductively.
\begin{corollary} Let $\mathfrak{X}\in\{USR,QR,SR,QSR\}$ and let $G_1,G_2,...,G_n\in \mathfrak{X}$. Then, the direct product $G_1\times G_2\times \ldots \times G_n \in \mathfrak{X}$, if and only if, for all non-rational groups $G_i$, $i\in I \subseteq \{1,2,...,n\}$, we have that
	$\mathbb{Q}(G_i)=\mathbb{Q}(\sqrt{d})$, for a fixed square free integer $d$. In particular, if $G\in \mathfrak{X}$, then so does $G\times C_2^n$, $n\in \mathbb{N}$.
\end{corollary}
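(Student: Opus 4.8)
The plan is to deduce the $n$-factor statement from the two-factor case by induction on $n$, treating the "if" and "only if" directions separately but in parallel. Write $I\subseteq\{1,\dots,n\}$ for the index set of the non-rational factors among $G_1,\dots,G_n$; the claim to be proved is that $G_1\times\cdots\times G_n\in\mathfrak{X}$ if and only if there is a fixed square-free integer $d$ with $\mathbb{Q}(G_i)=\mathbb{Q}(\sqrt{d})$ for every $i\in I$. The base case $n=1$ is vacuous (a single $G_1\in\mathfrak{X}$, with $I=\emptyset$ or $I=\{1\}$, satisfies the condition trivially), and the case $n=2$ is exactly the two-factor Proposition once one notes that its alternative "(i) or (ii)" is precisely the assertion "there is a fixed square-free $d$ with $\mathbb{Q}(G_i)=\mathbb{Q}(\sqrt d)$ for all non-rational $G_i$" when $n=2$.

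For the inductive step, set $H=G_1\times\cdots\times G_{n-1}$ and $G=H\times G_n$. First I would record that $\mathbb{Q}(H)$ is the compositum $\mathbb{Q}(G_1)\cdots\mathbb{Q}(G_{n-1})$, since the irreducible characters of a direct product are the outer tensor products of irreducible characters of the factors and their character values multiply. Now suppose $G\in\mathfrak{X}$. Since $\mathfrak{X}$ is closed under homomorphic images (Proposition~\ref{centre_quotient_closed_all}), both $H\in\mathfrak{X}$ and $G_n\in\mathfrak{X}$; applying the induction hypothesis to $H$ gives a fixed square-free $d'$ with $\mathbb{Q}(G_i)=\mathbb{Q}(\sqrt{d'})$ for all $i\in I\setminus\{n\}$, and in particular $\mathbb{Q}(H)=\mathbb{Q}$ if $I\setminus\{n\}=\emptyset$ and $\mathbb{Q}(H)=\mathbb{Q}(\sqrt{d'})$ otherwise. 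Applying the two-factor Proposition to $H\times G_n$: either one of $H,G_n$ is rational, or $\mathbb{Q}(H)=\mathbb{Q}(G_n)=\mathbb{Q}(\sqrt d)$ for a common square-free $d$. In every sub-case one checks that all non-rational $G_i$ ($i\in I$) have the same field $\mathbb{Q}(\sqrt d)$: if $H$ is rational then $I\subseteq\{n\}$ and there is nothing to coordinate; if $G_n$ is rational then $n\notin I$ and we keep $d=d'$; if neither is rational then $\mathbb{Q}(H)=\mathbb{Q}(\sqrt d)$ forces $d'=d$ (as $\mathbb{Q}(H)=\mathbb{Q}(\sqrt{d'})$ with $d'$ square-free) and $\mathbb{Q}(G_n)=\mathbb{Q}(\sqrt d)$ too, so the common value $d$ works for all of $I$. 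Conversely, if all $G_i$, $i\in I$, satisfy $\mathbb{Q}(G_i)=\mathbb{Q}(\sqrt d)$ for one fixed square-free $d$, then the induction hypothesis gives $H\in\mathfrak{X}$ with $\mathbb{Q}(H)\in\{\mathbb{Q},\mathbb{Q}(\sqrt d)\}$, and then the two-factor Proposition (its "if" direction, whichever of (i)/(ii) applies according to whether $\mathbb{Q}(H)$ or $\mathbb{Q}(G_n)$ is $\mathbb{Q}$) yields $G=H\times G_n\in\mathfrak{X}$.

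The main thing to be careful about—really the only obstacle—is the bookkeeping when some factors are rational: a rational factor contributes $\mathbb{Q}(G_i)=\mathbb{Q}$ and imposes no constraint, so one must make sure the induction hypothesis is being invoked with the correct index set $I\setminus\{n\}$ and that the "fixed $d$" is allowed to be chosen freely when $I$ is empty or a singleton. Once that is handled, the final sentence ($G\in\mathfrak{X}\implies G\times C_2^n\in\mathfrak{X}$) is immediate: $C_2$ is rational (even $\mathbb{Q}$), so each factor $C_2$ lies in $I^c$, the hypothesis of the corollary is automatically met, and hence $G\times C_2^n\in\mathfrak{X}$.
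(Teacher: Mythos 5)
Your proof is correct and is essentially the argument the paper intends: the paper gives no details beyond stating that the corollary "follows inductively" from the two-factor proposition, and your induction (using quotient-closedness to get the factors back in $\mathfrak{X}$ and the compositum description of $\mathbb{Q}(G_1\times\cdots\times G_{n-1})$) is precisely that argument, carried out carefully.
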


\subsection{Rational conjugacy classes:} We have observed in 3.1.1 that for an inverse semi-rational group $G$, the number of rational irreducible characters of $G$ is equal to the number of rational conjugacy classes of $G$, and this does not hold good any more if $G$ is assumed to be semi-rational or quadratic rational. However, we observe that result holds good if $G$ is known to be both semi-rational as well as quadratic rational. This follows exactly from the proof of (\cite{BCJM21}, Theorem 3.1), except only by replacing the character value fields $\mathbb{Q}(\sqrt{-d})$, where $d\in \mathbb{N}$ by  $\mathbb{Q}(\sqrt{d})$, 
where $d\in \mathbb{Z}*$.

\begin{theorem}
	Let $G\in QSR$, then the number of rational irreducible characters of $G$ is equal to the number of rational conjugacy classes of $G$.
\end{theorem}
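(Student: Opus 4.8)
The plan is to mimic the structure of the proof of (\cite{BCJM21}, Theorem 3.1), tracking where the hypothesis ``\cut'' (equivalently inverse semi-rational) is used and checking that the weaker hypothesis ``$G \in QSR$'' suffices at each of those points. Recall that the Galois group $\mathcal{G} = \Gal(\mathbb{Q}_G/\mathbb{Q})$, where $\mathbb{Q}_G = \mathbb{Q}(\xi)$ and $\xi$ is a primitive $e^{\mathrm{th}}$ root of unity for $e$ the exponent of $G$, acts on $\Irr(G)$ (via $\chi \mapsto \chi^\sigma$) and on the set of conjugacy classes (via $C_x \mapsto C_{x^j}$ where $\sigma = \sigma_j$); the orbits of these two actions are exactly the $\mathbb{Q}$-classes of characters and the $\mathbb{Q}$-classes of elements, and by a Brauer-type permutation lemma the number of orbits on each side is equal. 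The rational irreducible characters are precisely the singleton $\mathcal{G}$-orbits on $\Irr(G)$, and the rational conjugacy classes are precisely the singleton $\mathcal{G}$-orbits on conjugacy classes; so what must be shown is that $\mathcal{G}$ has the same number of \emph{fixed points} on the two sides.

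First I would set up the counting via orbit sizes. For $\chi \in \Irr(G)$ the $\mathcal{G}$-orbit of $\chi$ has size $[\mathbb{Q}(\chi):\mathbb{Q}]$, and for a conjugacy class $C_x$ the $\mathcal{G}$-orbit has size $[\mathbb{Q}(x):\mathbb{Q}]$. The hypothesis $G \in QSR = SR \cap QR$ forces every one of these orbit sizes to be $1$ or $2$: the $QR$ condition handles characters (\ref{E3}-type statement) and the $SR$ condition handles elements (Proposition~\ref{semi-rational}(ii)). So on both sides every orbit has size $1$ or $2$. Writing $k$ for the total number of conjugacy classes (equivalently $|\Irr(G)|$), and letting $r_{\mathrm{irr}}, r_{\mathrm{cc}}$ denote the numbers of rational irreducible characters and rational conjugacy classes, and $q_{\mathrm{irr}}, q_{\mathrm{cc}}$ the numbers of size-$2$ orbits on each side, we get $r_{\mathrm{irr}} + 2q_{\mathrm{irr}} = k = r_{\mathrm{cc}} + 2q_{\mathrm{cc}}$ and $r_{\mathrm{irr}} + q_{\mathrm{irr}} = r_{\mathrm{cc}} + q_{\mathrm{cc}}$ (the latter because the number of orbits agrees on the two sides by the permutation lemma). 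Subtracting, $q_{\mathrm{irr}} = q_{\mathrm{cc}}$, hence $r_{\mathrm{irr}} = r_{\mathrm{cc}}$, which is exactly the claim.

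The main obstacle — and the reason one cannot run this argument for $QR$ or $SR$ alone — is precisely the symmetry between the two sides: for a general quadratic rational group the element-orbit sizes need not be bounded by $2$ (Example~\ref{G1}), and for a general semi-rational group the character-orbit sizes need not be bounded by $2$ (Example~\ref{G2}); in either case the clean ``$1$ or $2$'' dichotomy breaks on one side and the two linear equations above no longer force $r_{\mathrm{irr}} = r_{\mathrm{cc}}$. So the real content is the observation that $QSR$ is exactly the hypothesis that makes \emph{both} actions have all orbits of size $\le 2$, after which the counting is immediate. I would present the proof as: quote the permutation lemma giving equal orbit counts on both sides; invoke $QR$ and $SR$ (via \ref{E3} and Proposition~\ref{semi-rational}) to see all orbits have size $1$ or $2$; then do the two-line arithmetic above. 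As the paper states, this is ``exactly the proof of (\cite{BCJM21}, Theorem 3.1) with $\mathbb{Q}(\sqrt{-d})$ replaced by $\mathbb{Q}(\sqrt{d})$'' — the only change being that the quadratic extensions involved are now allowed to be real, which affects none of the orbit-counting steps.
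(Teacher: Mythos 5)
Your proof is correct and is essentially the paper's argument: the paper simply invokes the proof of (\cite{BCJM21}) for \cut groups with $\mathbb{Q}(\sqrt{-d})$ replaced by $\mathbb{Q}(\sqrt{d})$, and what you have written out — the Galois action on $\Irr(G)$ and on conjugacy classes, equal orbit counts via Brauer's permutation lemma, orbit sizes at most $2$ from the $QR$ and $SR$ hypotheses respectively, and the two-equation count forcing equal numbers of singleton orbits — is exactly that argument made explicit. Your observation that $QSR$ is precisely what bounds the orbit sizes on \emph{both} sides (and that $QR$ or $SR$ alone fails, as Examples~\ref{G1} and \ref{G2} show) correctly identifies where the hypothesis is used.
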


The following corollaries are immediate from above theorem.
\begin{corollary}
	If $G$ is an uniformly semi-rational group, then the number of rational irreducible characters of $G$ is equal to the number of rational conjugacy classes of $G$.
\end{corollary}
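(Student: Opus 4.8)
The plan is to obtain the corollary as an immediate consequence of the preceding theorem on quadratic semi-rational groups, once we place $G$ inside the class $QSR$. So the only real work is to verify the inclusion $USR \subseteq QSR$; after that the theorem applies verbatim, with no further computation.

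To see $USR \subseteq QSR$, recall that $QSR = SR \cap QR$. First, if $G$ is $m$ semi-rational, then by definition the generators of $\langle x\rangle$ lie in $C_x \cup C_{x^m}$ for every $x \in G$; taking $m_x = m$ uniformly shows that $G$ is semi-rational, hence $USR \subseteq SR$. Second, Proposition~\ref{USR} gives $USR \subseteq QR$. Intersecting these, $USR \subseteq SR \cap QR = QSR$. This is of course just the last inclusion in the chain $\mathbb{Q}\text{ groups} \subsetneq ISR \subsetneq USR \subsetneq QSR$ recorded earlier, so nothing new is needed here.

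Now let $G$ be uniformly semi-rational. By the inclusion above, $G \in QSR$, so the theorem preceding this corollary applies to $G$ and yields that the number of rational irreducible characters of $G$ equals the number of rational conjugacy classes of $G$. There is essentially no obstacle: all the substantive content lies in the $QSR$ case of the theorem, whose proof in turn follows that of (\cite{BCJM21}, Theorem~3.1), replacing the imaginary quadratic character fields $\mathbb{Q}(\sqrt{-d})$ with $\mathbb{Q}(\sqrt{d})$ for square-free $d$; the present corollary is purely a matter of specializing that statement to a subclass.
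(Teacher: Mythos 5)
Your proposal is correct and matches the paper's own route: the corollary is stated as immediate from the preceding theorem on quadratic semi-rational groups, using precisely the inclusion $USR \subseteq QSR = SR \cap QR$ (which the paper records via Proposition~4.4 and the chain $ISR \subsetneq USR \subsetneq QSR$). Nothing further is needed.
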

\begin{corollary}
	Let $G\in QSR$, then the number of quadratic irreducible characters of $G$ is equal to the number of quadratic conjugacy classes of $G$.
\end{corollary}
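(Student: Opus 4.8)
The plan is to obtain this as a short counting consequence of the preceding theorem, rather than by reproving an analogue of Navarro's result from \cite{Nav09}. Since $G\in QSR$ is in particular quadratic rational, every $\chi\in\Irr(G)$ satisfies $[\mathbb{Q}(\chi):\mathbb{Q}]\in\{1,2\}$; hence $\Irr(G)$ is the disjoint union of the set of rational irreducible characters and the set of quadratic irreducible characters. Dually, since $G$ is semi-rational, \Cref{semi-rational} gives $[\mathbb{Q}(x):\mathbb{Q}]\le 2$ for every $x\in G$, so the set of conjugacy classes of $G$ is the disjoint union of the rational conjugacy classes and the quadratic conjugacy classes. Thus both the rows and the columns of the character table of $G$ split cleanly into a rational part and a quadratic part.

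Next I would invoke the classical equality between $|\Irr(G)|$ and the number of conjugacy classes of $G$, together with the preceding theorem, which asserts that for $G\in QSR$ the number of rational irreducible characters equals the number of rational conjugacy classes. Subtracting the latter equality from the former, and using the two disjoint-union decompositions above, yields that the number of quadratic irreducible characters equals the number of quadratic conjugacy classes, which is the assertion.

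There is essentially no hard step here; the one point that genuinely needs the full hypothesis $G\in QSR = SR\cap QR$ is that the two partitions into rational and quadratic parts are exhaustive — quadratic rationality is what controls the character side, and semi-rationality is what controls the conjugacy-class side, so neither alone suffices (as the examples $G_1,G_2$ of \Cref{G1} and \Cref{G2} show). Once that is in place the result is immediate, and it also makes transparent that the corollary is really just the rational case of the preceding theorem transported across the character table.
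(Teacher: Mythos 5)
Your argument is correct and is precisely the counting argument the paper intends when it calls this corollary ``immediate from the above theorem'': quadratic rationality makes every irreducible character rational or quadratic, semi-rationality makes every conjugacy class rational or quadratic, and subtracting the equal rational counts from the equal totals $|\Irr(G)| = \#\{\text{conjugacy classes}\}$ gives the claim. No gaps; your remark that both halves of the hypothesis $QSR = SR \cap QR$ are genuinely needed (one for rows, one for columns) is exactly the right point.
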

\subsection{Prime spectrum:}

 We saw in the last section that for a finite group $G$, being \cut has strong restrictions on the prime spectrum $\pi(G)$. We study the restrictions on $\pi(G)$  when $G$ is in one of the other classes in consideration and also consider Sylow $p$-subgroups of $G$, for $p\in \pi(G)$.

Recall that if $G$ is a rational group then $2\in\pi{(G)}$, and if $G$ is an inverse semi-rational group, then either $2\in \pi(G)$ or $3\in\pi(G)$. Generalising \Cref{prime_spectrum_cut}, we have the following result.

\begin{theorem}\label{prime_spectrum_all}For $G\in \mathfrak{X}$, where $\mathfrak{X}\in\{ISR,USR,QR,SR,QSR\}$, we have
	
	\begin{enumerate}
		\item[\rm{(i)}] Either $2\in \pi(G)$ or $3\in\pi(G)$.
		\item[\rm{(ii)}] If $G$ is nilpotent, then $\pi(G)\subseteq \{2,3\}$.
		\item[\rm{(iii)}] If $G$ is an odd order group, then $\pi(G)\subseteq \{3,7\}$.
		\item[\rm{(iv)}] If $G$ is solvable, then 
		\begin{enumerate}
			\item[\rm{(a)}] $\pi(G)\subseteq \{2,3,5,7\}$, if $\mathfrak{X}=ISR$.
			\item[\rm{(b)}] $\pi(G)\subseteq \{2,3,5,7,13\}$, if $\mathfrak{X}\in \{USR,QR,QSR\}$.
			\item[\rm{(c)}] $\pi(G)\subseteq \{2,3,5,7,13,17\}$, if $\mathfrak{X}=SR$.
		\end{enumerate}
	\end{enumerate} 
	
\end{theorem}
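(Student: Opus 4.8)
The plan is to reduce everything to the character-field criterion \ref{E8} (for $ISR$) and its analogues for the other classes, and then to bound the primes $p\in\pi(G)$ by studying how a $p$-element forces a large character field. First I would handle parts (i), (ii), (iii), which are essentially uniform across all the classes $\mathfrak{X}$ in the list. For (i): if $2\notin\pi(G)$ then $G$ has odd order, hence is solvable, and the abelianization $G/G'$ is a nontrivial abelian group lying in $\mathfrak{X}$ (using \Cref{centre_quotient_closed_all}); by \Cref{abelian_same} an abelian group in any of these classes has exponent dividing $4$ or $6$, so being of odd order it has exponent $3$, forcing $3\in\pi(G)$. For (ii): if $G$ is nilpotent it is the direct product of its Sylow subgroups, each of which is a quotient of $G$ and hence lies in $\mathfrak{X}$; by \Cref{abelian_same} (applied to the center of each Sylow $p$-subgroup, which again lies in $\mathfrak{X}$ by \Cref{centre_quotient_closed_all}) we get $p\in\{2,3\}$. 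For (iii): if $G$ has odd order and lies in $\mathfrak{X}$, then each of the classes $ISR, USR, QR, SR, QSR$ coincides on odd-order groups with $ISR$ by \Cref{abelian_same}, so $G$ is \cut and part (iii) of \Cref{prime_spectrum_cut} applies verbatim.

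The substance is part (iv), the solvable case, and here the three sub-bounds (a), (b), (c) correspond to three successively weaker constraints on the character fields. The case (a), $\mathfrak{X}=ISR$, is precisely part (iv) of \Cref{prime_spectrum_cut}, so nothing new is needed. For (b) and (c) I would mimic the argument behind \Cref{prime_spectrum_cut}(iv): suppose $p\in\pi(G)$ with $p$ large; take a Sylow $p$-subgroup $P$ and, using solvability, locate an elementary abelian chief factor on which some element acts, or pass to a minimal-order subquotient $H\in\mathfrak{X}$ with $p\mid|H|$. The key point is that for a $p$-element $x$, the field $\mathbb{Q}(x)$ (or $\mathbb{Q}(\chi)$ for a suitable $\chi$) is a subfield of the cyclotomic field $\mathbb{Q}(\zeta_{p^k})$ stable under $B_G(x)=N_G(\langle x\rangle)/C_G(\langle x\rangle)\le\Aut(\langle x\rangle)$, and the semi-rationality hypothesis says $[\Aut(\langle x\rangle):B_G(x)]\le 2$ (\Cref{semi-rational}) — in the $USR/QR/QSR$ case one has the finer statement that a single $m$ does the job for all $x$ (\Cref{m_semi_rational}). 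Combining this index-$2$ (or index-$2$-with-uniform-$m$) constraint with the structure of $\Aut(C_{p^k})$ and the way $B_G(x)$ sits inside a solvable group of bounded composition length forces $\phi(p^k)$, and hence $p$, to be small; tracking the arithmetic gives $p\le 7$ for $ISR$, $p\le 13$ for $USR/QR/QSR$, and $p\le 17$ for $SR$. I would also invoke the fact, already recorded before \Cref{degree bound}, that $[\mathbb{Q}(G):\mathbb{Q}]\le 2^5$ for solvable \cut groups and its analogue from \cite{Ten12}, \cite{PV25}, \cite{dRV24} for the larger classes, which is really where the numbers $13$ and $17$ come from.

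The main obstacle, as I see it, is pinning down exactly why $17$ (and not some larger prime) is the ceiling for semi-rational solvable groups and why $13$ suffices for the intermediate classes: this is not a formal consequence of the index-$2$ condition alone but requires the classification-flavoured input on which primes can divide the order of a solvable semi-rational group — essentially Tent's and the de la Rica–Vera/Pérez–Vera bounds on character degrees and field degrees, together with a Hall-subgroup/chief-factor analysis showing that a Sylow $p$-subgroup for $p\in\{11,13,17\}$ would force an automorphism action incompatible with semi-rationality unless $p$ is small. So the honest structure of the proof is: (i)--(iii) are short and self-contained given the earlier results; (iv)(a) is \Cref{prime_spectrum_cut}(iv); and (iv)(b),(c) are quotations (with the square-free sign adjustment) of the corresponding results in the semi-rational/quadratic-rational literature, reorganized under the common umbrella of \Cref{centre_quotient_closed_all} and \Cref{abelian_same}. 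I would present it that way rather than attempting a from-scratch proof of the prime bounds.
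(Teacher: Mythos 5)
Your proposal is correct and follows essentially the same route as the paper: (i)--(iii) are deduced from quotient/centre closure (\Cref{centre_quotient_closed_all}) and the coincidence of all the classes on abelian and odd-order groups, together with \Cref{prime_spectrum_cut}; (iv)(a) is exactly \Cref{prime_spectrum_cut}(iv); and (iv)(b),(c) are, as you ultimately say, quotations of the known prime-spectrum bounds for solvable quadratic rational and semi-rational groups rather than a from-scratch argument. The only slip is attribution: the paper takes (iv)(b) from Tent (\cite{Ten12}, Theorem A) and (iv)(c) from Chillag--Dolfi (\cite{CD10}, Theorem 2), not from the del R\'io--Vergani or Pacifici--Vergani preprints, and your sketched chief-factor/index-$2$ analysis is not needed (nor carried out in the paper).
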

\begin{proof}The statements (i)-(ii) follow from the arguments in proof of (\cite{BMP17}, Theorem 1), keeping in view \Cref{centre_quotient_closed_all} and \Cref{same_nilpotent2_odd}. Additionally, (iii) as well as (iv)(a) follows from \Cref{same_nilpotent2_odd} and \Cref{prime_spectrum_cut}, whereas (iv)(b) follows from (\cite{Ten12}, Theorem A). Also, (iv)(c) is proved in (\cite{CD10}, Theorem 2).  
	\end{proof}
 \begin{remark}
 	The prime spectrums given in above theorem are optimal in the sense that none of the primes can be dropped for respective classes, except possibly $17$, for semi-rational groups. For instance, the existence of a group of order divisible by any prime in $\{2,3,5,7,13\}$ in the required class, follows easily from Main theorem of \cite{PV25} and (\cite{BMP17}, Theorem 5).  
 \end{remark}

\begin{question}Does there exist a solvable semi-rational group $G$ such that $17$ divides $|G|$?
	\end{question}
Analogous to the embedding questions for solvable subgroups of suitable prime spectrums, in \cut groups, one may ask relevant questions for the classes discussed here. 
\begin{question}
	Can every $13$-group be embedded in a solvable group of $\mathfrak{X}$, where \linebreak $\mathfrak{X}\in \{USR,SR, QSR,QR\}$?
\end{question} 

As a follow up to above question, one may look into various versions of questions, on the line of \Cref{embedding_cut}, aiming at the following question.
\begin{question}
	For what $\mathfrak{X} \in \{USR,SR, QSR,QR\}$, a solvable $\{2,3,5,7,13\}$-group can be embedded in a solvable group from $\mathfrak{X}$?
\end{question}
In view of the possible prime spectrum for a nilpotent group in a given class, Theorem 3 of \cite{Mah18} may be generalised as following:
\begin{theorem}Let $\mathfrak{X} \in \{\mathrm{ISR},\mathrm{USR}, \mathrm{SR}, \mathrm{QSR}\}. $ A finite nilpotent group $G$ is in $\mathfrak{X}$,  if and only if, the following conditions hold:
	\begin{enumerate}
		\item[\rm{(i)}] $G$ is a $2$-group in $\mathfrak{X}$.
		\item[\rm{(ii)}] $G$ is a $3$-group in $\mathfrak{X}$. Equivalently, for all $x \in G$, $x$ satisfies $x^2 \sim x^{-1}$.
		\item[\rm{(iii)}] $G \cong H \times K$, where $H$ is a rational $2$-group and $K$ satisfies condition \rm{(ii)}.
	\end{enumerate}
\end{theorem}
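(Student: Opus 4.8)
The plan is to split a finite nilpotent group into its Sylow subgroups and feed the relevant character fields into the direct‑product criteria already recorded. First I would collect the structural inputs. If $G$ is nilpotent and $G\in\mathfrak{X}$, then \Cref{prime_spectrum_all}(ii) gives $\pi(G)\subseteq\{2,3\}$, so $G=G_2\times G_3$ where $G_2$ and $G_3$ are the Sylow $2$‑ and $3$‑subgroups of $G$. Since each $G_p$ is simultaneously a subgroup and a homomorphic image of $G$, \Cref{centre_quotient_closed_all} gives $G_2\in\mathfrak{X}$ and $G_3\in\mathfrak{X}$. As $G_3$ has odd order, \Cref{same_nilpotent2_odd} shows that on $G_3$ the conditions $\mathrm{ISR}$, $\mathrm{USR}$, $\mathrm{SR}$, $\mathrm{QSR}$ all coincide; that is, ``$G_3\in\mathfrak{X}$'' means exactly that $G_3$ is an inverse semi-rational (equivalently, \cut) $3$‑group, which by \Cref{PrimitiveRoots} (equivalently, by \cite{Mah18}, Theorem~3) is the same as requiring $x^2\sim x^{-1}$ for every $x\in G_3$; here one uses additionally that a nontrivial $3$‑group has no rational element, so the competing alternative $x^2\sim x$ never occurs. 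This justifies condition~(ii) together with its reformulation.

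For the forward implication I would run a short cyclotomic computation and then invoke the direct‑product criterion --- namely \Cref{cut_direct} and its corollary for $\mathfrak{X}=\mathrm{ISR}$, and the corresponding proposition and corollary for $\mathfrak{X}\in\{\mathrm{USR},\mathrm{SR},\mathrm{QSR}\}$. First I pin down $\mathbb{Q}(G_3)$: the character fields of a $3$‑group lie inside some $\mathbb{Q}(\zeta_{3^k})$, whose Galois group is cyclic and hence has a unique quadratic subfield, namely $\mathbb{Q}(\zeta_3)=\mathbb{Q}(\sqrt{-3})$; since $G_3$ is \cut, its character fields are $\mathbb{Q}$ or imaginary quadratic, so $\mathbb{Q}(G_3)\in\{\mathbb{Q},\mathbb{Q}(\sqrt{-3})\}$, and it equals $\mathbb{Q}$ exactly when $G_3$ is rational, which for a $3$‑group forces $G_3=1$ (a nontrivial rational group has even order). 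Dually, a nontrivial $2$‑group cannot satisfy $\mathbb{Q}(G_2)=\mathbb{Q}(\sqrt{-3})$, since its character fields lie in some $\mathbb{Q}(\zeta_{2^k})$ and $\mathbb{Q}(\zeta_{2^k})\cap\mathbb{Q}(\zeta_3)=\mathbb{Q}$. Now apply the criterion to $G=G_2\times G_3$. If $G_3=1$, then $G=G_2\in\mathfrak{X}$ is a $2$‑group in $\mathfrak{X}$, which is condition~(i). If $G_3\neq1$, then $G_3$ is not rational and $\mathbb{Q}(G_3)=\mathbb{Q}(\sqrt{-3})$, so the criterion forces either $G_2$ to be rational or $\mathbb{Q}(G_2)=\mathbb{Q}(G_3)=\mathbb{Q}(\sqrt{-3})$; the latter is impossible, hence $G_2$ is rational (possibly trivial). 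If $G_2=1$ this is condition~(ii), and if $G_2\neq1$ then $G=H\times K$ with $H=G_2$ a nontrivial rational $2$‑group and $K=G_3$ a $3$‑group satisfying $x^2\sim x^{-1}$ for all $x$, which is condition~(iii).

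For the converse, each of (i)--(iii) places $G$ in $\mathfrak{X}$: (i) and (ii) are immediate from the hypotheses, and in case~(iii) one has $H\in\mathfrak{X}$ because rational groups form a subclass of each of $\mathrm{ISR},\mathrm{USR},\mathrm{SR},\mathrm{QSR}$, and $K\in\mathfrak{X}$ because a $3$‑group with $x^2\sim x^{-1}$ for all $x$ is \cut; then the direct‑product criterion applies with the rational factor $H$, giving $G=H\times K\in\mathfrak{X}$. I expect the only genuinely delicate point to be the field bookkeeping of the middle paragraph --- identifying $\mathbb{Q}(G_3)$ as exactly $\mathbb{Q}(\sqrt{-3})$ when $G_3\neq1$ and excluding $\mathbb{Q}(G_2)=\mathbb{Q}(\sqrt{-3})$ --- which is precisely what makes the statement uniform over all four choices of $\mathfrak{X}$ and reconciles the two forms of the direct‑product corollary (the one phrased with $\mathbb{Q}(\sqrt{-d})$, $d\in\mathbb{N}$, versus the one with $\mathbb{Q}(\sqrt{d})$, $d$ square‑free), since $-3$ fits both.
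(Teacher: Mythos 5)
Your proof is correct, and it follows exactly the route the paper intends (the paper states this theorem without proof, as a generalisation of Theorem 3 of \cite{Mah18} ``in view of'' its earlier results): restrict the prime spectrum via \Cref{prime_spectrum_all}(ii), pass to the Sylow factors by quotient-closure (\Cref{centre_quotient_closed_all}), collapse the four classes on the odd factor via \Cref{same_nilpotent2_odd}, and finish with the direct-product criteria, using that $\mathbb{Q}(G_3)\in\{\mathbb{Q},\mathbb{Q}(\sqrt{-3})\}$ while no nontrivial $2$-group can have $\mathbb{Q}(G_2)=\mathbb{Q}(\sqrt{-3})$. The field bookkeeping and the remark that $x^2\sim x$ is impossible for a nontrivial element of a $3$-group are exactly the points one must check, and you handle both correctly.
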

\subsection{Sylow subgroups:}
Though being  inverse semi-rational is not a subgroup closed property, we have seen that the situation is not very clear for Sylow subgroups of \cut groups. We now observe similar results for semi-rational and in particular for uniformly semi-rational groups. Analogous to \Cref{Sylow p-subgroup}, we have the following:

	\begin{theorem}\label{Sylow p-subgroup_SR}
	Let $G$ be a semi-rational (an $m$ semi-rational) group and let $P $ be a Sylow $p$-subgroup of $G$, where $p\in \{2,3\}$. Then $P$ is also a semi-rational (an $m$ semi-rational), if:
	\begin{enumerate}
		\item[\rm{(i)}] $P$ is an abelian or a normal subgroup of $G$.
		\item[\rm{(ii)}] $G$ is a supersolvable group.
		\item[\rm{(iii)}] $G$ is a Frobenius group. 
		\item[\rm{(iv)}] $G$ is a simple group, except possibly when $G$ is either $M$ (Monster group) and $p=2$ or $G=B$({Baby} Monster group) and $p\in\{2,3\}$. 
		\item[\rm{(v)}] $G$ is a group of odd order.
		\item[\rm{(vi)}] $G$ is a symmetric group.
	\end{enumerate}

\end{theorem}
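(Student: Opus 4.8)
The plan is to reproduce the proof of \Cref{Sylow p-subgroup} step by step, replacing the inverse semi-rationality criterion \ref{E7} by the semi-rationality criterion $[\Aut(\langle x\rangle):B_G(x)]\leq 2$ of \Cref{semi-rational}(iii), and in the uniform case by the criterion $\Aut(\langle x\rangle)=B_G(x)\langle\tau_m\rangle$ of \Cref{m_semi_rational}(ii), where $\tau_m\colon x\mapsto x^m$. I would carry out the argument for semi-rational $G$, the $m$-semi-rational case being obtained by the identical argument while tracking the value of $m$. Throughout one uses the structure of $\Aut(C_{p^k})$ (cyclic of order $p^{k-1}(p-1)$ for odd $p$, and $C_{2^{k-2}}\times C_2$ for $p=2$, $k\geq 3$), the fact that semi-rationality and $m$-semi-rationality pass to quotients and to the centre (\Cref{centre_quotient_closed_all}), and the primitive-root refinement of \Cref{PrimitiveRoots}.

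For (i), if $P$ is abelian then $\langle x\rangle\leq P\leq C_G(\langle x\rangle)$, so $|B_G(x)|=[N_G(\langle x\rangle):C_G(\langle x\rangle)]$ is coprime to $p$; combined with $[\Aut(\langle x\rangle):B_G(x)]\leq 2$ and the order of $\Aut(C_{p^f})$ this forces $|x|\mid 4$ when $p=2$ and $|x|=3$ when $p=3$, so $P$ has exponent dividing $4$ or dividing $6$ and is in particular \cut, hence semi-rational. If instead $P\unlhd G$, then $P$ is the unique Sylow $p$-subgroup of $G$; for $x\in P$ and a generator $x^j$ of $\langle x\rangle$, semi-rationality of $G$ gives $g\in N_G(\langle x\rangle)$ with $gx^jg^{-1}\in\{x,x^{m_x}\}$, and I would replace $g$ by its $p$-part, which then lies in $P$. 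The point that the $p'$-part of $g$ centralises $\langle x\rangle$ is free when $p=2$, since $\Aut(C_{2^f})$ is a $2$-group and hence so is $B_G(x)$, and holds for $p=3$ unless $B_G(x)=\Aut(\langle x\rangle)$, i.e.\ unless $x$ is rational in $G$; in that residual subcase I would instead take $j$ a primitive root modulo $3^f$ and invoke \Cref{PrimitiveRoots}. This yields that $x$ is semi-rational (respectively $m$-semi-rational) in $P$.

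For (ii), a supersolvable group has a Sylow tower, hence $p$-length at most $1$, so there are normal subgroups $M\unlhd N\unlhd G$ with $M$ and $G/N$ being $p'$-groups and $N/M$ a $p$-group; the argument of (\cite{BCJM21}, Proposition~6.3) then applies verbatim for $p\in\{2,3\}$ in both variants. For (iii), writing a Frobenius semi-rational group as $K\rtimes H$: if $p\mid |K|$ then $P$ is normal in $G$ ($K$ being nilpotent and characteristic) and (i) applies; otherwise $P$ is a Sylow $p$-subgroup of $H$, hence cyclic or (generalised) quaternion by (\cite{Isa08}, Corollary~6.17), and one handles these exactly as in \Cref{Sylow p-subgroup}(iii) — the cyclic case via (i). For (iv), the relevant list is now the list of finite simple \emph{semi-rational} groups, which is strictly larger than the list of simple \cut groups; granting this classification, a direct \texttt{GAP} \cite{GAP4} computation confirms the assertion, with the only unresolved cases being $p=2$ for the Monster and $p\in\{2,3\}$ for the Baby Monster. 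Finally (v) and (vi) are word-for-word as in \Cref{Sylow p-subgroup}: the odd-order case comes from \cite{Gri20} (together with \Cref{same_nilpotent2_odd}), and symmetric groups, being rational and therefore semi-rational, are covered by (\cite{CD10}, Remark~14).

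I expect the main obstacle to be part (iv). In contrast to \cut groups, where the simple groups are extracted from Trefethen's description of composition factors (\Cref{CompositionFactors}), here one needs the analogous enumeration of finite simple semi-rational groups — and the additional sporadic groups occurring there are precisely what produces the new exceptional case of the Baby Monster — together with a computational verification over a longer list. A secondary, purely technical, difficulty lies in the transfer step in (i) and (iii): pushing a conjugating element into $P$ is automatic for $p=2$ but for $p=3$ requires separating off the case in which $x$ is rational in $G$; and one must keep track of which value of $m$ survives restriction to $P$, which needs a short separate check whenever the analysis forces $P$ to have exponent dividing $4$ or $6$.
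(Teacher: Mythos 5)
The paper itself gives no separate proof of this theorem: it is stated as ``analogous to'' \Cref{Sylow p-subgroup}, so your plan of re-running that proof with \Cref{semi-rational}(iii) and \Cref{m_semi_rational}(ii) in place of \ref{E7} is exactly the intended route, and your treatments of (i), (ii), (v), (vi) and the need for the classification of simple semi-rational groups plus a \texttt{GAP} check in (iv) match what the paper implicitly relies on. Your extra care in the normal case of (i) for $p=3$ (pushing the conjugator into $P$ via its $3$-part, with the rational residual case handled through the primitive-root generator of the cyclic group $\Aut(\langle x\rangle)$) is sound, since a subgroup of a cyclic group has index at most $2$ exactly when it contains the square of a generator.

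The one place where ``exactly as in \Cref{Sylow p-subgroup}(iii)'' is not good enough is the generalised quaternion case of (iii). In the \cut proof that case is dismissed by ``quaternion, hence exponent $4$, hence \cut,'' and no analogue of that dispatch is available here: $Q_{2^n}$ is \emph{not} semi-rational for $n\geq 5$ (for $x$ of order $2^{n-1}$ one has $B_{Q_{2^n}}(x)\cong C_2$ inside $\Aut(C_{2^{n-1}})$ of order $2^{n-2}$), and $Q_{16}$ is semi-rational only with $m\equiv\pm3\pmod 8$. So you must actually argue that a Sylow $2$-subgroup of the complement of a semi-rational Frobenius group cannot be a large generalised quaternion group — e.g.\ by first showing $N_G(U)\leq H$ for $1\neq U\leq H$, so that $B_G(x)=B_H(x)$ and $H$ is itself semi-rational, and then invoking the structure/classification results for (quadratic) semi-rational Frobenius groups (\cite{PV25}, \cite{Ver24}) that the paper uses elsewhere (\Cref{Simple_QR_QSR}); this is also precisely where your ``which $m$ survives'' caveat becomes a genuine issue rather than a formality, since a $Q_{16}$ Sylow subgroup is uniformly semi-rational but not $m$-semi-rational for the same $m$ unless $m\equiv\pm3\pmod 8$. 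Apart from this missing argument (and the acknowledged reliance on the simple-group classification in (iv)), your proposal is correct and follows the paper's approach.
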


Note that Sylow $2$-subgroups of a \cut group need not be \cut again. The counter examples given in 3.1.1, namely \texttt{SmallGroup[384,18033]} and \texttt{SmallGroup[384,18040]} do not serve the purpose here anymore, as their Sylow $2$-subgroups turn out to be semi-rational, rather uniformly semi-rational. Hence, it shall be interesting to investigate the following question:
\begin{question}
	Is a Sylow $p$-subgroup ($p\in\{2,3\}$) of a semi-rational (uniformly semi-rational) group again a semi-rational (uniformly semi-rational) group?
\end{question}

\subsection{Classification of groups:} As pointed in \Cref{classification}, a complete classification of inverse semi-rational groups is known for various substantial classes of finite groups. These classes include abelian groups, metacyclic groups, Frobenius groups, odd order groups, simple groups and certain $p$-groups. In this section, for the same classes, we would like to classify the groups in $\mathfrak{X}$, where $\mathfrak{X}\in \{SR,QR,QSR,USR\}$. To begin with, we recall that in view of \Cref{same_nilpotent2_odd}, these classes are all same for abelian groups, rather for nilpotent groups of class at most $2$ and for odd order groups. For $2$-groups, however, the classes are all distinct. For instance, cyclic group of order $4$ is an inverse semi-rational group which is not rational; quaternion group of order $16$ (\texttt{SmallGroup[16,9]}) is a $3$ semi-rational but not inverse semi-rational; \texttt{SmallGroup[128,417]} is a quadratic semi-rational group which is not uniformly semi-rational; group $G_1$ in \Cref{G1}  is semi-rational but not quadratic rational and vice versa for $G_2$ in \Cref{G2}.\\

 We now consider other classes such as metacyclic, Frobenius, simple etc. 

\subsubsection{Metacyclic groups:}

In \Cref{metacyclic_SR}, we give a complete classification of finite non-abelian metacyclic groups which are semi-rational, thereby generalising (\cite{BMP17}, Theorem 5). As inverse semi-rational groups are already listed in (\cite{BMP17}, Theorem 5), we enlist only the remaining ones, that is semi-rational groups which are not inverse semi-rational groups. The value of $m$ indicated along group presentation signifies that the group is $m$-semi-rational.
\begin{theorem}\label{metacyclic_SR}
	Let $G$ be a finite non-abelian metacyclic group given by
	$$ G= \langle a, b ~|~ a^n=1, b^t= a^l, a^b=a^r\rangle$$ where $n, t, r, l$ are natural numbers such that $r^t \equiv 1 (\mathrm{mod}~n), lr \equiv l (\mathrm{mod}~n)$ and $l$ divides $n$. Then, $G$ is a semi-rational group, if and only if, either $G$ is inverse semi-rational or $G$ is isomorphic to one of the following groups. Further, each semi-rational group, except two of them, is uniformly semi-rational ($m$ semi-rational).

%
%
%
%
	
\begin{quote}

	$\langle a, b \,| \, a^5=1, b^2= a^5, a^b=a^4\rangle, m=2$ \\
	$\langle a, b \,| \, a^8=1, b^2= a^4, a^b=a^7\rangle, m=3$ \\
	$\langle a, b \,| \, a^8=1, b^2= e, a^b=a^7\rangle, m=3$ \\ 
	$\langle a, b \,| \, a^{10}=1, b^2= a^5, a^b=a^9\rangle, m=3$ \\ 
	$\langle a, b \,| \, a^{10}=1, b^2= e, a^b=a^9\rangle, m=3$ \\ 
	$\langle a, b \,| \, a^{12}=1, b^2= a^6, a^b=a^{11}\rangle, m=7$ \\
	$\langle a, b \,| \, a^{12}=1, b^2= e, a^b=a^{11}\rangle, m=5$ \\ 
	$\langle a, b \,| \, a^8=1, b^4= e, a^b=a^7\rangle, m=3$ \\
	$\langle a, b \,| \, a^{10}=1, b^4= e, a^b=a^9\rangle, m=3$ \\
	$\langle a, b \,| \, a^{12}=1, b^4= e, a^b=a^{11}\rangle, m=7$ \\ 
	$\langle a, b \,| \, a^{13}=1, b^6= e, a^b=a^4\rangle, m=5$ \\  
 $\langle a, b \,| \, a^{21}=1, b^6= e, a^b=a^5\rangle, m=11$\\
	$\langle a, b \,| \, a^{26}=1, b^6= e, a^b=a^{17}\rangle, m=11$ \\ 
	$\langle a, b \,| \, a^{28}=1, b^6= a^{14}, a^b=a^3\rangle, m=5$ \\ 
	$\langle a, b \,| \, a^{28}=1, b^6= e, a^b=a^3\rangle, m=5$ \\
		$\langle a, b \,| \, a^{42}=1, b^6= e, a^b=a^5\rangle, m=11$ \\
		 $\langle a, b \,| \, a^8=1, b^4= a^4, a^b=a^3\rangle$\\
		 $\langle a, b \,| \, a^{12}=1, b^4= a^6, a^b=a^{11}\rangle$
\end{quote}
\end{theorem}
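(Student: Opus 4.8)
## Proof proposal for Theorem~\ref{metacyclic_SR}

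The plan is to reduce the classification to a finite computation using the group-theoretic criterion for semi-rationality, namely Proposition~\ref{semi-rational}(iii): $G$ is semi-rational if and only if $[\Aut(\langle x\rangle):B_G(x)]\le 2$ for every $x\in G$, where $B_G(x)=N_G(\langle x\rangle)/C_G(\langle x\rangle)$. First I would isolate which cyclic subgroups of $G=\langle a,b\mid a^n=1,\ b^t=a^l,\ a^b=a^r\rangle$ actually matter: since a quotient of a semi-rational group is semi-rational (Proposition~\ref{centre_quotient_closed_all}), and since elements of order dividing $4$ or $6$ automatically satisfy the condition, the constraint bites only on cyclic subgroups of order not in $\{1,2,3,4,6\}$. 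For a metacyclic group, up to conjugacy the cyclic subgroups are generated by elements of the form $a^i b^j$; the key structural point is that $\langle a\rangle$ is normal, so one treats $x\in\langle a\rangle$ (where $B_G(x)$ contains the automorphism $a\mapsto a^r$ together with whatever conjugation by powers of $a$ contributes, i.e.\ essentially the subgroup of $\Aut(\langle x\rangle)$ generated by the class of $r$) and then the elements outside $\langle a\rangle$ separately. The semi-rationality of $G$ is then a finite list of numerical conditions on $n,t,r,l$: for each divisor $d\mid n$ with $d\notin\{1,2,3,4,6\}$, the subgroup of $(\ZZ/d)^\times$ generated by $r\bmod d$ must have index $\le 2$ in $(\ZZ/d)^\times$, and analogous conditions must hold for the orders of elements $a^ib^j\notin\langle a\rangle$.

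The second step is to extract an a priori bound on $n$ and on the relevant orders, so that only finitely many presentations survive. Here I would invoke Theorem~\ref{prime_spectrum_all}: a solvable semi-rational group has $\pi(G)\subseteq\{2,3,5,7,13,17\}$, and in fact for metacyclic (hence supersolvable) groups one can push further --- the condition ``$(\ZZ/d)^\times$ has a subgroup of index $\le 2$ generated by a single element'' forces $d$ to have a primitive root or be twice such, by the structure of $(\ZZ/d)^\times$, so $d\in\{1,2,4,p^k,2p^k\}$, and the index-$\le 2$ condition on a cyclic $(\ZZ/p^k)^\times$ of order $p^{k-1}(p-1)$ forces $p^{k-1}(p-1)\mid 2\cdot(\text{order of }r)$ to be severely restricted; combined with $p\le 17$ this caps $n$. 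The same analysis applied to elements outside $\langle a\rangle$, using $b^t=a^l$ to compute their orders, caps $t$ as well. After this reduction one is left with an explicit, finite search space.

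The third step is the finite verification: run through the surviving presentations (this is where a \texttt{GAP} computation, in the spirit of the computations already cited from \cite{BMP17} and \cite{BCJM21}, is appropriate), discard those that are abelian or already inverse semi-rational (these are covered by \cite{BMP17}, Theorem~5), discard redundant presentations of isomorphic groups, and for each remaining group determine whether it is semi-rational and, if so, compute the minimal $m$ (if any) for which it is $m$-semi-rational by checking which single integer $m$ satisfies $x^j\sim x$ or $x^j\sim x^m$ for all $x$ and all $j$ coprime to $|x|$; this yields the listed values of $m$ and singles out the two groups $\langle a,b\mid a^8=1, b^4=a^4, a^b=a^3\rangle$ and $\langle a,b\mid a^{12}=1, b^4=a^6, a^b=a^{11}\rangle$ that are semi-rational but not uniformly semi-rational (for these, no single $m$ works simultaneously for all elements, which one verifies directly on the elements of order $8$ resp.\ $12$).

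The main obstacle I expect is not any single step but the bookkeeping in the middle: getting a clean, provably complete bound on $n$ and $t$ from the numerical conditions, and --- more delicately --- correctly enumerating the non-normal cyclic subgroups $\langle a^ib^j\rangle$ and their orders in terms of $n,t,r,l$, since $B_G(x)$ for such $x$ is harder to pin down than for $x\in\langle a\rangle$ and one must be careful that two syntactically different presentations can give isomorphic groups (so the list is stated ``up to isomorphism''). Establishing that the displayed list is \emph{exhaustive}, rather than merely that each listed group works, is the real content; the converse direction (each listed group is semi-rational with the stated $m$) is a routine finite check.
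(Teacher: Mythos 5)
Your overall strategy (reduce to a finite set of parameters via the semi-rationality criterion, then finish with a \texttt{GAP} check) is the same as the paper's, but the reduction you sketch contains a genuine error and a gap that together would make your search space both wrong and not provably complete. The false step is the claim that the condition ``the subgroup of $(\mathbb{Z}/d)^\times$ generated by $r$ has index $\le 2$'' forces $d$ to have a primitive root, i.e.\ $d\in\{1,2,4,p^k,2p^k\}$. It only forces $(\mathbb{Z}/d)^\times$ to contain a cyclic subgroup of index at most $2$, which also happens for $d=8,12,15,16,20,21,28,\dots$ (e.g.\ $(\mathbb{Z}/12)^\times\cong C_2\times C_2$ and $11$ generates an index-$2$ subgroup). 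If you pruned the search as you propose, you would discard $n\in\{12,21,28,42\}$, and several groups in the theorem's list (those with $a^{12}$, $a^{21}$, $a^{28}$, $a^{42}$) would be missed, so the resulting classification would be incomplete. The second gap is the bound on $t$: you only say that analysing elements outside $\langle a\rangle$ ``caps $t$ as well,'' but without a bound on $t$ the order of $r$ is a priori unbounded and the index condition on $\langle a\rangle$ does not cap $\varphi(n)$; this is exactly the bookkeeping you flag as delicate, and it is where your argument is not yet a proof.

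The paper's reduction avoids both problems with two short observations you did not use. First, $G/\langle a\rangle$ is cyclic of order $t$, and quotients of semi-rational groups are semi-rational, so by Proposition~\ref{abelian_same} one gets $t\in\{2,3,4,6\}$ at once. Second, applying Proposition~\ref{semi-rational}(iii) just to the single element $a$ (whose $B_G(a)$ has order dividing $t$, being generated by the automorphism $a\mapsto a^r$) gives $\varphi(n)\le 2t\le 12$, i.e.\ $\varphi(n)\in\{2,4,6,8,12\}$, which bounds $n$ outright with no prime-spectrum or primitive-root analysis and no need to control the non-normal cyclic subgroups $\langle a^ib^j\rangle$ at the reduction stage; those are handled only inside the finite \texttt{GAP} verification, exactly as in your third step. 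If you replace your second paragraph by these two observations, your proposal becomes essentially the paper's proof.
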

\begin{proof}
	Let $G$ be a metacyclic group with the given presentation. If $G$ is semi-rational, then so is $G/\langle a\rangle\cong \langle \overline{b}\rangle$. By \Cref{abelian_same}, \( t \in \{2, 3, 4, 6\} \). The
	number of $G$-conjugates of $a$ is  equal to $t$ and consequently we obtain
	that $|B_G(a)| = t$. Using \Cref{semi-rational} , we obtain that $\frac{\phi(n)}{t} \in\{1,2\}$ and thus $\phi(n) \in \{2,4,6,8,12\}$. These restrictions on parameters yield a finite number of metacyclic groups. 
%
	 Using \texttt{GAP} \cite{GAP4}, with a systematic check done on above finite restricted parameters, the result is obtained. 
\end{proof}
We have same classification of quadratic metacyclic groups, as discussed below.
\begin{theorem}\label{metacyclic_QR}
	A finite metacyclic group is quadratic rational if and only if it is uniformly semi-rational.
\end{theorem}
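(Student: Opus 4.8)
The plan is to prove the equivalence by showing that, for finite metacyclic groups, the class $QR$ coincides with $USR$, and then to invoke the explicit classification already established in Theorem \ref{metacyclic_SR}. One direction is essentially free: by Proposition \ref{USR} we have $USR \subseteq QR$ in full generality, so every uniformly semi-rational metacyclic group is quadratic rational. The substance is the reverse inclusion for metacyclic groups, namely that a finite metacyclic quadratic rational group must be uniformly semi-rational. First I would reduce, exactly as in the proof of Theorem \ref{metacyclic_SR}, to a finite list of parameter restrictions: writing $G=\langle a,b \mid a^n=1,\ b^t=a^l,\ a^b=a^r\rangle$, the quotient $G/\langle a\rangle \cong \langle \overline b\rangle$ is abelian and quadratic rational (by Proposition \ref{centre_quotient_closed_all}, $QR$ is quotient-closed), so by Proposition \ref{abelian_same} its order satisfies $t\in\{1,2,3,4,6\}$. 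The cyclic subgroup $\langle a\rangle$ likewise is quadratic rational, forcing, via Proposition \ref{abelian_same} applied to $\langle a \rangle$ — which is abelian — that... wait, $\langle a\rangle$ need not be quadratic rational as a subgroup, so instead I would bound $\phi(n)$ directly from the character field condition on $G$: the faithful characters of $G$ restricted along the cyclic normal subgroup produce character fields containing $\mathbb{Q}(\zeta_n)^{B_G(a)}$, and $[B_G(a):1]$ equals the number of $G$-conjugates of $a$, which divides $t$; hence $\phi(n)/|B_G(a)| \le 2$ and so $\phi(n)\in\{2,4,6,8,12\}$ (together with the degenerate rational cases). These two bounds cut the problem down to a finite, explicitly enumerable family of metacyclic groups.

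Having arrived at a finite list, the second step is to observe that this is \emph{exactly} the same finite parameter family that appears in the proof of Theorem \ref{metacyclic_SR}: the bound $t\in\{2,3,4,6\}$ and $\phi(n)\in\{2,4,6,8,12\}$ was derived there from the semi-rationality hypothesis and Proposition \ref{semi-rational}, and here it is derived from quadratic rationality, but the resulting constraints on $(n,t,r,l)$ are identical. So I would run the same systematic \texttt{GAP} \cite{GAP4} check over this finite family, this time testing the quadratic rational condition (all character fields are $\mathbb{Q}$ or a quadratic extension of $\mathbb{Q}$) rather than the semi-rational condition, and verify that a metacyclic group in the family is quadratic rational precisely when it is one of the groups listed in Theorem \ref{metacyclic_SR} (the inverse semi-rational ones together with the eighteen extra presentations), each of which is uniformly semi-rational except for the two flagged exceptions — and one then checks directly that those two exceptional groups, while not $m$-semi-rational for a single global $m$, are nonetheless quadratic rational, which is consistent with the statement since the theorem asserts equality of the \emph{classes} $QR$ and $USR$ among metacyclic groups, not a presentation-by-presentation match of the $m$ values. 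Here I should be careful: if the two exceptional groups in Theorem \ref{metacyclic_SR} are semi-rational but genuinely not uniformly semi-rational, then for the stated equivalence $QR = USR$ to hold on metacyclic groups they must fail to be quadratic rational; so the \texttt{GAP} check must confirm that those two groups are \emph{not} quadratic rational, which is the delicate consistency point of the argument.

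The main obstacle is precisely this last point — reconciling the list in Theorem \ref{metacyclic_SR} with the claim that $QR \cap \{\text{metacyclic}\} = USR \cap \{\text{metacyclic}\}$. I expect the resolution to be that the two exceptional semi-rational metacyclic groups are \emph{not} quadratic rational (they are semi-rational with a genuinely non-uniform choice of $m$, and correspondingly have some irreducible character with character field of degree $>2$ over $\mathbb{Q}$), so that removing them from the semi-rational list yields exactly the uniformly semi-rational metacyclic groups, and that common list is also exactly the quadratic rational metacyclic list. A clean way to package the argument, avoiding reliance on a second \texttt{GAP} pass, would be: (1) $USR\subseteq QR$ always; (2) for metacyclic $G$, quadratic rationality forces the parameter bounds above, hence $G$ is semi-rational (Proposition \ref{semi-rational}, since $\phi(n)/|B_G(a)|\le 2$ gives $[\mathbb{Q}(x):\mathbb{Q}]\le 2$ for cyclic-subgroup generators, and one extends this to all elements using the metacyclic structure); (3) a metacyclic group that is both semi-rational and quadratic rational is quadratic semi-rational, and one checks from the explicit list of Theorem \ref{metacyclic_SR} that every quadratic semi-rational metacyclic group is in fact uniformly semi-rational. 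Step (2), extending the quadratic bound from cyclic-subgroup generators to arbitrary elements of a metacyclic group, is where I would spend the most care, since an arbitrary element $a^i b^j$ need not lie in $\langle a\rangle$; but the metacyclic structure lets one analyze $\langle a^i b^j\rangle$ through its intersection with $\langle a\rangle$ and its image in $G/\langle a\rangle$, both of which are controlled by the bounds already in hand.
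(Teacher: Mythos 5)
Your proposal is correct and essentially mirrors the paper's proof: quotient closure gives $t\in\{2,3,4,6\}$, a character-field degree bound gives $\phi(n)\in\{2,4,6,8,12\}$ (the paper obtains $\frac{\phi(n)}{o}\leq 2$ via the strong Shoda pair $(\langle a,b^o\rangle,\langle b^o\rangle)$ and \cite{OdRS04}, which is the same bound as your $\phi(n)/|B_G(a)|\leq 2$ since $|B_G(a)|=o$), and the conclusion is a \texttt{GAP} check of quadratic rationality over exactly the finite parameter family of \Cref{metacyclic_SR}. Your self-corrected handling of the delicate point is also the right one: the two semi-rational metacyclic groups that are not uniformly semi-rational are indeed \emph{not} quadratic rational (each has an irreducible character whose field is a degree-$4$ extension, namely $\mathbb{Q}(i,\sqrt{2})$ resp.\ $\mathbb{Q}(i,\sqrt{3})$), so the computational check yields $QR=USR$ on this class, as stated.
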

\begin{proof}
Suppose $G$ is non-abelian metacyclic group, with presentation in \Cref{metacyclic_SR}. If $G$ is quadratic rational, then $G/\langle a\rangle$ is also quadratic rational cyclic group of order $t $, so that $t \in  \{2, 3, 4, 6\}$. Observe that if $o$ denotes the multiplicative order of $r$ modulo $n$, then $(\langle a, b^{o} \rangle, \langle  b^o\rangle )$ is a strong Shoda pair of $G$ and the corresponding simple component is $M_o(\mathbb{F}),$ with $[\mathbb{F} : \mathbb{Q}]= \frac{\phi(n)}{o},$ where $\phi$ is the Euler totient function (\cite{OdRS04}, Proposition 3.4). As $G$ is a quadratic rational group,  we have that $\frac{\phi(n)}{o}\leq 2,$ i.e., $\frac{\phi(n)}{o} \in \{1, 2\}$.  Since $o|t$ and $t \in \{2, 3, 4, 6\} $, we have $\phi(n) \in \{2,4,6,8,12\}$. Consequently, we have same choices of parameters, as in the last theorem. Checking again for being quadratic rational, via \texttt{GAP}, we get the stated result.
\end{proof}
Now, we immediately have the following corollary.
\begin{corollary}\label{metacycilc_eq}
The following are equivalent statements for a finite metacyclic group $G$.
\begin{enumerate}
	
	\item[\rm{(i)}] $G$ is uniformly semi-rational.
\item[\rm{(ii)}]	$G$ is  quadratic rational.
\item[\rm{(iii)}]	$G$ is quadratic semi-rational.
\end{enumerate}
\end{corollary}
\subsubsection{Simple groups and Frobenius groups:}
Clearly, the alternating groups $A_n$ are both semi-rational and quadratic rational for every $n$. However, not every alternating group is uniformly semi-rational group. It is known that $A_n$ is uniformly semi-rational, if and only if $n \leq 22$ and $n \notin \{16, 21\}$  \cite{Ver24}. In contrast, for non-alternating finite simple groups, the notions of being semi-rational, quadratic rational, and uniformly semi-rational coincide. 

\begin{proposition}\label{Simple_QR_QSR}(\cite{Ver24}, Remark 3.1, \cite{PV25}, Remark 5.5)
If $G$ is a non-alternating simple group or a Frobenius group, then the following statements are equivalent :
\begin{enumerate}
\item[\rm{(i)}]  $G$ is uniformly semi-rational.
\item[\rm{(ii)}]$G$ is quadratic semi-rational
\item[\rm{(iii)}] $G$ is quadratic rational.
\item[\rm{(iv)}] $G$ is semi-rational.
 \end{enumerate}
\end{proposition}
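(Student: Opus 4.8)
The implications $(i)\Rightarrow(ii)\Rightarrow(iii)$ and $(ii)\Rightarrow(iv)$ hold for every finite group, since they merely restate the inclusions $\mathrm{USR}\subseteq\mathrm{QSR}=\mathrm{SR}\cap\mathrm{QR}$ recorded above. So the content is to prove, for $G$ a non-alternating simple group or a Frobenius group, that $(iii)\Rightarrow(i)$ and $(iv)\Rightarrow(i)$; equivalently, that in these two families being merely quadratic rational, or merely semi-rational, already forces \emph{uniform} semi-rationality. I would work throughout with the group-theoretic reformulations: by \Cref{semi-rational}, $G$ is semi-rational iff $[\Aut(\langle x\rangle):B_G(x)]\le 2$ for all $x\in G$, and by \Cref{m_semi_rational}, once $G$ is semi-rational it is $m$-semi-rational iff for each $x$ with $[\Aut(\langle x\rangle):B_G(x)]=2$ the residue of $m$ modulo $|x|$ lies outside the index-two subgroup $B_G(x)\le(\mathbb{Z}/|x|\mathbb{Z})^{\ast}$. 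Thus the task is to exhibit a single integer $m$ that simultaneously avoids all the index-two subgroups $B_G(x)$ occurring in $G$. As an intermediate step for the groups in question it is convenient to establish $\mathrm{SR}\Rightarrow\mathrm{QR}$, so that $\mathrm{SR}$ and $\mathrm{QR}$ both collapse to $\mathrm{QSR}$; this uses $\mathbb{Q}(\chi(x))\subseteq\mathbb{Q}(x)\cap\mathbb{Q}(\chi)$ together with $[\mathbb{Q}(x):\mathbb{Q}]\le 2$, which is automatic for semi-rational groups by \Cref{semi-rational}.

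For non-alternating simple groups the plan is to invoke the classification. First one records that only finitely many non-alternating simple groups are semi-rational, and likewise only finitely many are quadratic rational, the two lists being explicitly known; this rests on the CFSG together with bounds on character fields in the spirit of \Cref{CompositionFactors} and \Cref{degree bound}, and is carried out in \cite{Ver24} and \cite{PV25}. In particular no infinite family of simple groups other than the alternating ones is in play. For each group $S$ on the resulting finite list one then reads off, from the \texttt{GAP} character table library or the Atlas, the fusion of the generators of each cyclic subgroup, determines the subgroups $B_S(x)$, and checks that a common escaping residue $m$ exists; since the list is finite and the relevant element orders small, this is a finite character-table computation.

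For Frobenius groups I would use the structure $G=K\rtimes H$ with Frobenius kernel $K$ (nilpotent, by Thompson) and complement $H$, whose Sylow subgroups are cyclic or generalized quaternion. One reduces the semi-rationality / quadratic-rationality of $G$ to conditions on $H$ — which is a quotient of $G$ and so inherits the property by \Cref{centre_quotient_closed_all} — on $K$, and on the fusion induced by the coprime action of $H$ on $K$, exploiting the special form of the irreducible characters of a Frobenius group (inflated from $H$, or induced from $K$). For $x\in H$ the analysis parallels the metacyclic computation behind \Cref{metacyclic_SR}; for $x\in K$ one uses $C_H(x)=1$ and the fact that $N_G(\langle x\rangle)$ controls the fusion inside $\langle x\rangle\setminus\{1\}$ to pin down $B_G(x)$. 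Assembling these pieces yields the classification of semi-rational (and of quadratic rational) Frobenius groups, and inspection of the list shows a uniform $m$ can always be selected; alternatively one can cite this classification directly from \cite{Bac18} and \cite{PV25}.

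The step I expect to be the main obstacle is the uniformity assertion — that the finitely many local constraints "$m\notin B_G(x)$" are jointly satisfiable. For simple groups this amounts to being sure the classification list is complete and contains no entry that is semi-rational but not uniformly so, so the difficulty is imported from the CFSG-based classification rather than created afresh. For Frobenius groups it is the combinatorial bookkeeping of which index-two subgroups $B_G(x)$ actually arise as $x$ ranges over $K$ and $H$, and of checking that they never collectively exhaust the admissible residues modulo the element orders involved; everything else reduces either to an already-cited structural classification or to a finite computation.
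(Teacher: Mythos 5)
The paper itself offers no independent proof of this proposition: it is quoted directly from the cited sources (\cite{Ver24}, Remark 3.1, and \cite{PV25}, Remark 5.5), and your plan ultimately does the same thing, since both the completeness of the list of non-alternating simple semi-rational/quadratic rational groups and the Frobenius classification are deferred to \cite{Ver24}, \cite{PV25} (the trivial implications $(i)\Rightarrow(ii)$, $(ii)\Rightarrow(iii)$, $(ii)\Rightarrow(iv)$ being immediate from $USR\subseteq QSR=SR\cap QR$, exactly as in the text preceding the proposition). So in substance you are taking the same route as the paper; the added sketches of how the CFSG-based check and the Frobenius analysis go are plausible but are plans rather than verifications. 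One citation slip: \cite{Bac18} classifies Frobenius \cut (inverse semi-rational) groups, not semi-rational or quadratic rational Frobenius groups, so for the Frobenius case the reference that actually carries the statement is \cite{PV25}.

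There is, however, one concretely flawed step in your outline: the proposed ``intermediate'' derivation of $SR\Rightarrow QR$ from $\mathbb{Q}(\chi(x))\subseteq\mathbb{Q}(x)\cap\mathbb{Q}(\chi)$ together with $[\mathbb{Q}(x):\mathbb{Q}]\le 2$. Knowing that every individual value $\chi(x)$ lies in some quadratic (or rational) field does not bound $[\mathbb{Q}(\chi):\mathbb{Q}]$ by $2$, because $\mathbb{Q}(\chi)$ is generated by all these values and different $x$ may contribute different quadratic fields, producing a multiquadratic extension. Indeed $SR\Rightarrow QR$ is false in general: the group $G_1$ of \Cref{G1} is semi-rational but not quadratic rational, so any purely field-theoretic argument of the kind you sketch must fail. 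For the two families in the proposition the collapse $SR=QR=QSR=USR$ is genuinely a consequence of the structural classifications in \cite{Ver24} and \cite{PV25} (and, for the uniformity of the integer $m$, of the explicit determination of the subgroups $B_G(x)$ there), not of a local comparison of character fields; your main route survives because it leans on those references, but the shortcut should be deleted or replaced by an appeal to them.
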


Since Camina group is either a Frobenius group or a $p$-group \cite{DS96}, \Cref{Simple_QR_QSR} extends to the class of Camina groups and we have the following:

\begin{corollary} If $G$ is a Camina group, then the following statements are equivalent :
\begin{enumerate}
	\item[\rm{(i)}]  $G$ is uniformly semi-rational.
	\item[\rm{(ii)}] $G$ is quadratic semi-rational
	\item[\rm{(iii)}] $G$ is quadratic rational.
	\item[\rm{(iv)}] $G$ is semi-rational.
\end{enumerate}

\end{corollary}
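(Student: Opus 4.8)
The plan is to deduce this corollary directly from Proposition~\ref{Simple_QR_QSR} together with the structural classification of Camina groups. The key input is the theorem of Dark and Scoppola (\cite{DS96}) which asserts that every Camina group is either a Frobenius group or a $p$-group. Since Proposition~\ref{Simple_QR_QSR} already establishes the equivalence of (i)--(iv) for Frobenius groups, the only case that needs separate attention is that of Camina $p$-groups.

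First I would dispose of the Camina $p$-group case. A Camina $p$-group is already known to be inverse semi-rational (this is the remark quoted in Section~\ref{classification}, ``Camina $p$-groups are already \cut''). Hence it is in particular uniformly semi-rational, quadratic semi-rational, quadratic rational and semi-rational, so all four statements (i)--(iv) hold trivially and are therefore equivalent. This handles one half of the dichotomy.

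Next I would invoke the Frobenius case. If $G$ is a Camina group that is not a $p$-group, then by \cite{DS96} it is a Frobenius group, and Proposition~\ref{Simple_QR_QSR} applies verbatim: (i) $\Leftrightarrow$ (ii) $\Leftrightarrow$ (iii) $\Leftrightarrow$ (iv). Combining the two cases, for every Camina group $G$ the four conditions are equivalent, which is exactly the assertion of the corollary. One should also note the trivial implications (i) $\Rightarrow$ (ii) $\Rightarrow$ (iii) $\Rightarrow$ (iv) and (i) $\Rightarrow$ (iv) hold for all finite groups, so what Proposition~\ref{Simple_QR_QSR} really supplies is the reverse implications (iv) $\Rightarrow$ (i) and (iii) $\Rightarrow$ (i) in the Frobenius case, and the Camina $p$-group case forces $G$ to be $ISR$, which sits at the top of the chain $ISR \subsetneq USR \subsetneq QSR = SR \cap QR$; so again all the reverse implications collapse.

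There is essentially no obstacle here beyond correctly citing the Dark--Scoppola dichotomy and recalling that Camina $p$-groups are \cut; the entire argument is a two-line case split feeding into Proposition~\ref{Simple_QR_QSR}. The only mild subtlety is making sure that the Frobenius groups arising as Camina groups genuinely fall under the hypotheses of Proposition~\ref{Simple_QR_QSR} (which is stated for arbitrary Frobenius groups, so this is automatic), and that in the $p$-group case one is content with the strong conclusion that $G$ lies in the smallest of the classes, making every implication an equality of truth values.
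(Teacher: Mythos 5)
Your proposal is correct and follows essentially the same route as the paper: the paper's justification is precisely the Dark--Scoppola dichotomy that a Camina group is either a Frobenius group or a $p$-group, with the Frobenius case handled by the proposition on Frobenius groups and the $p$-group case disposed of because Camina $p$-groups are already \cut (hence all four conditions hold trivially). Your write-up just makes the $p$-group half of the case split explicit, which the paper leaves implicit.
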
 
\subsection{GK-graphs:}
	The $GK$-graphs realizable by solvable inverse semi-rational groups have been studied. A key result used to classify $GK$-graphs is clearly the prime spectrum of $G$. In view of \Cref{prime_spectrum_all}, the following question(s) naturally makes(make) sense.	\begin{question} Given $\mathfrak{X} \in \{USR, QSR, QR,SR\}$, which all $GK$-graphs are realizable by finite solvable groups in $\mathfrak{X}$?
	\end{question}

	\subsection{Composition series:} \Cref{CompositionFactors} motivates the following question(s):
	
	\begin{question}Given $\mathfrak{X} \in \{USR, QSR,SR\}$, which all
		non-abelian simple groups can occur as composition factors of $G$, if  $G\in \mathfrak{X}$?
\end{question}
Note that the possible composition factors of $G$, when $G$ is a quadratic rational group have been provided in \cite{Tre17}.

\subsection{Degrees of $\mathbb{Q}(G)$:}

Chillag and Dolfi \cite{CD10} asked whether \( [\mathbb{Q}(G) : \mathbb{Q}] \) is bounded for solvable  semi-rational groups. Tent~\cite[Theorem~7]{Ten12} answered this affirmatively for all semi-rational groups.  In fact, from their proof the bound can be refined as follows:
\begin{theorem}[Tent~\cite{Ten12}]
	Let $G$ be a quadratic rational or semi-rational group. Assume that there are exactly $l$ distinct prime divisors of $ |G|$. Then
	$$
[\mathbb{Q}(G) : \mathbb{Q}] \leq 2^{l+1}.
	$$
\end{theorem}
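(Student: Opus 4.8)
The plan is to bound the degree of $\mathbb{Q}(G)$ over $\mathbb{Q}$ by analysing how $\mathbb{Q}(G)$ sits inside a cyclotomic field and then controlling its ramification at each prime divisor of $|G|$. First I would set $e$ to be the exponent of $G$ and recall that $\mathbb{Q}(G)\subseteq \mathbb{Q}_G=\mathbb{Q}(\xi)$, where $\xi$ is a primitive $e$th root of unity, so that $\mathrm{Gal}(\mathbb{Q}_G/\mathbb{Q})\cong (\mathbb{Z}/e\mathbb{Z})^\times$ and $\mathbb{Q}(G)$ corresponds to a subgroup $H\le (\mathbb{Z}/e\mathbb{Z})^\times$ via Galois theory; the goal becomes showing $[(\mathbb{Z}/e\mathbb{Z})^\times : H]\le 2^{l+1}$, where $l$ is the number of distinct primes dividing $|G|$ (equivalently, dividing $e$).

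The key step is the local analysis: write $e = \prod_{i=1}^{l} p_i^{a_i}$ and use the decomposition $(\mathbb{Z}/e\mathbb{Z})^\times \cong \prod_i (\mathbb{Z}/p_i^{a_i}\mathbb{Z})^\times$. For a semi-rational (resp.\ quadratic rational) group, the criterion \ref{E8} type reasoning — more precisely Proposition~\ref{semi-rational}(ii), which gives $[\mathbb{Q}(x):\mathbb{Q}]\le 2$ for every $x\in G$ — must be localized one prime at a time: for an element $x$ of prime-power order $p_i^{a_i}$ one reads off that the subgroup of $(\mathbb{Z}/p_i^{a_i}\mathbb{Z})^\times$ fixing $\mathbb{Q}(x)$ has index at most $2$, and since $\mathbb{Q}(x)$ generated by such elements already captures the $p_i$-part of the field $\mathbb{Q}(G)$, the projection of $H$ to each factor $(\mathbb{Z}/p_i^{a_i}\mathbb{Z})^\times$ has index at most $2$. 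Then $\mathbb{Q}(G)$ is contained in the compositum of these $l$ fields, each of degree at most $2$ over the appropriate maximal subfield, but the compositum can pick up an additional factor of $2$ coming from the intersection with $\mathbb{Q}(i)$ or $\mathbb{Q}(\sqrt{-3})$-type quadratic subfields shared among several local pieces (the difference between the real and rational parts), which accounts for the exponent $l+1$ rather than $l$. For the quadratic rational case one argues identically with $\mathbb{Q}(\chi)$ in place of $\mathbb{Q}(x)$, using that each $\mathbb{Q}(\chi)$ has degree at most $2$ and that $\mathbb{Q}(G)$ is the compositum of the $\mathbb{Q}(\chi)$.

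I would then assemble the bound: $[\mathbb{Q}(G):\mathbb{Q}] \le \prod_{i=1}^{l} 2 \cdot 2 = 2^{l+1}$, where one factor of $2$ per prime comes from the local index bound and the single extra factor of $2$ comes from the global gluing (the ``real versus rational'' discrepancy, i.e.\ the possible quadratic gap measured by \ref{E20}). Care is needed at $p=2$, where $(\mathbb{Z}/2^{a}\mathbb{Z})^\times$ is not cyclic and one must check the index-$2$ argument still goes through for its $C_2\times C_{2^{a-2}}$ structure; this is routine since elements of $2$-power order again satisfy $[\mathbb{Q}(x):\mathbb{Q}]\le 2$.

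\textbf{Main obstacle.} The hard part will be making the global gluing precise — namely showing that the compositum of the $l$ locally-index-$\le 2$ subfields over $\mathbb{Q}$ has degree at most $2^{l+1}$ and not something larger, i.e.\ correctly accounting for how the quadratic subfields of different $\mathbb{Q}(\xi_{p_i^{a_i}})$ can fail to be independent. This is exactly where Tent's argument is subtle, and where the exponent $l+1$ (rather than a naive $2^l$ or a larger bound) is pinned down; I expect the cleanest route is to bound $\mathbb{Q}(G)$ between its maximal real subfield-type object and the full field and invoke the semi-rationality/quadratic-rationality hypothesis to limit that last gap to a single quadratic extension.
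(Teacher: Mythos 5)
Your ambient setup --- embedding $\mathbb{Q}(G)$ in $\mathbb{Q}(\xi)$ for $\xi$ a primitive $e$th root of unity and doing Galois-theoretic bookkeeping in $(\mathbb{Z}/e\mathbb{Z})^{\times}$ --- is reasonable (the paper itself gives no proof, deferring to Tent), but the core of your plan has a genuine gap, and the mechanism you propose for the exponent $l+1$ is not the right one. The flawed step is the prime-by-prime reduction: a subfield of $\mathbb{Q}(\xi)$ is neither determined by nor contained in the compositum of its intersections with the local pieces $\mathbb{Q}(\zeta_{p_i^{a_i}})$ (for instance $\mathbb{Q}(\sqrt{-15})\subseteq\mathbb{Q}(\zeta_{15})$ meets both $\mathbb{Q}(\zeta_3)$ and $\mathbb{Q}(\zeta_5)$ trivially), and bounding the indices of the projections of $H=\Gal(\mathbb{Q}(\xi)/\mathbb{Q}(G))$ onto the factors $(\mathbb{Z}/p_i^{a_i}\mathbb{Z})^{\times}$ does not bound $[(\mathbb{Z}/e\mathbb{Z})^{\times}:H]$ (a diagonal subgroup projects onto each factor yet has large index). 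Likewise $\mathbb{Q}(G)$ need not lie in the compositum of the $l$ fields $\mathbb{Q}(x)$ with $x$ of maximal prime-power order, so the ``factor $2$ per prime, plus one global factor'' count is unsupported; the ``real versus rational'' gluing discrepancy you invoke for the $+1$ is exactly the step you flag as unresolved, and it is in fact not where the $+1$ comes from.

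The argument that works is shorter and bypasses all of this. Since $\mathbb{Q}(G)$ is the compositum of the fields $\mathbb{Q}(\chi)$, $\chi\in\Irr(G)$ (equivalently of the fields $\mathbb{Q}(x)$, $x\in G$), the quadratic rational (resp.\ semi-rational) hypothesis exhibits $\mathbb{Q}(G)$ as a compositum of extensions of degree at most $2$; restriction embeds $\Gal(\mathbb{Q}(G)/\mathbb{Q})$ into the product of the groups $\Gal(\mathbb{Q}(\chi)/\mathbb{Q})$ (resp.\ $\Gal(\mathbb{Q}(x)/\mathbb{Q})$), so $\Gal(\mathbb{Q}(G)/\mathbb{Q})$ is elementary abelian of exponent $2$. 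It is also a quotient of $\Gal(\mathbb{Q}(\xi)/\mathbb{Q})\cong(\mathbb{Z}/2^{a_0}\mathbb{Z})^{\times}\times\prod_i(\mathbb{Z}/p_i^{a_i}\mathbb{Z})^{\times}$, hence its order divides $2^{t}$ with $t$ the $2$-rank of this group: each odd prime contributes at most $1$ (cyclic factor), while the prime $2$ contributes at most $2$ --- precisely the non-cyclicity of $(\mathbb{Z}/2^{a}\mathbb{Z})^{\times}$ for $a\geq 3$ that you set aside as ``routine'', and which is the sole source of the $+1$. Thus $t\leq (l-1)+2=l+1$ when $|G|$ is even and $t\leq l$ otherwise, giving $[\mathbb{Q}(G):\mathbb{Q}]\leq 2^{l+1}$. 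The missing idea in your proposal is therefore not a delicate gluing lemma but the observation that $\mathbb{Q}(G)$ is multiquadratic combined with the $2$-rank computation of $(\mathbb{Z}/e\mathbb{Z})^{\times}$.
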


This gives a bound in terms of the prime divisors of $|G|$ but not a uniform bound across all such groups. Alternating groups, which are both semi-rational and quadratic rational, demonstrate that we can not obtain a uniform bound for  $[\mathbb{Q}(G):\mathbb{Q}]$ in these classess.
Bachle~\cite{Bac19} asked whether there exists a natural class $\mathfrak{C}$ of groups containing all rational groups such that $ [\mathbb{Q}(G):\mathbb{Q}] \leq c $ for all  $G \in \mathfrak{C}$, where $c$ is a fixed constant. As stated in \Cref{degree_question}, this question remains open for \cut groups. 
 Since not all alternating groups are uniformly semi-rational, this leads naturally to the following question:
\begin{question}
	Is the degree of the extension $ \mathbb{Q}(G)$ over $\mathbb{Q} $ uniformly bounded for the class of uniformly semi-rational groups?
\end{question}
\subsection{Normalizer property:} We have observed that the \cut property implies the normalizer property. However, certain properties of more general classes may also be sufficient for the normalizer property to hold.

	\begin{question}
For what $\mathfrak{X}\in \{USR,QSR,SR,QR\}$, we have that $G\in\mathfrak{X} $ implies that $G$ has the normalizer property?
	\end{question}

	\subsection{The rank $\rho(G)$:} Recall that the torsion free group of $\mathcal{Z}(\U(\mathbb{Z}G))$ is a free abelian group of finite rank, which we denote by $\rho(G)$. Also, a group $G$ is \cut, i.e., inverse semi-rational if and only if $\rho(G)=0$, so that groups with rank $\rho(G)=0$ essentially belong to the classes considered in this section. As all alternating groups are quadratic rational as well as semi-rational and ranks of these groups can be arbitrarily large (see \cite{AKS08}), there is no bound on ranks for groups which are semi-rational or quadratic rational or quadratic semi-rational. However, alternating group $A_n$ is uniformly semi-rational only for finite choices of $n$, namely $n\leq 22$, $n\neq 16,21$ and hence the situation is different from previous cases. Clearly, USR contains groups of non-zero rank, but following question may be addressed.
	
	\begin{question}
	Is there a bound on free ranks of group of central units of integral group rings of uniformly semi-rational groups?
	\end{question}

\section*{Closing remarks}We have already observed the interesting properties enclosed in \cut groups and the potential that this topic holds because of the rich interplay involving group theory, ring theory, character theory etc. Having stated details on \cut groups and its generalised classes, we close the article by remarks on two aspects of the study of \cut groups.
\begin{description}
		\item[Existence] As already stated, finite \cut groups are quite many in numbers, especially relative to rational groups. However, if we go to bigger classes like semi-rational or quadratic rational, we do not see much escalation in the percentage of groups.  This has been well analysed in numbers in \cite{dRV24}.
	\item[Generalisations] In Section 4, we have studied some classes of groups which contain rational groups and generalise the notion of \cut groups. The class of \cut groups can be generalised in other directions. For arbitrary (not necessarily finite) \cut groups, analogous results may be found in \cite{MS99}, \cite{DMS05} and \cite{BMP19}. The class of groups $G$ for which $\rho(G)\leq 1$, may be yet another generalisation of this class. Such groups are termed as extended \cut groups and have been studied in \cite{GKM25}. The list of simple groups which are extended \cut groups has been deduced in \cite{BBM20}.  One may consider yet another generalisation by investigating conditions for trivial units in group ring $RG$, where $R$ is not essentially $\mathbb{Z}$.

\end{description}
	\begin{small}
\newcommand{\etalchar}[1]{$^{#1}$}
\providecommand{\bysame}{\leavevmode\hbox to3em{\hrulefill}\thinspace}
\providecommand{\MR}{\relax\ifhmode\unskip\space\fi MR }
\providecommand{\MRhref}[2]{%
	\href{http://www.ams.org/mathscinet-getitem?mr=#1}{#2}
}
\providecommand{\href}[2]{#2}

\end{small}


\begin{thebibliography}{BKMdR23}
	
	\bibitem[AA69]{AA69}
	R.G. Ayoub and C.~Ayoub, \emph{On the group ring of a finite abelian group},
	Bull. Austral. Math. Soc. \textbf{1} (1969), 245--261.
	
	\bibitem[AB91]{AB91}
	V.A. Artamonov and A.A. Bovdi, \emph{Integral group rings: Groups of units and
		classical k-theory}, Journal of Soviet Mathematics \textbf{57} (1991),
	2931--2958.
	
	\bibitem[AD17]{AD17}
	S.H. Alavi and A.~Daneshkhah, \emph{On semi-rational finite simple groups},
	Monatsh. Math. \textbf{184} (2017), no.~2, 175--184.
	
	\bibitem[AHP93]{AHP93}
	S.R. Arora, A.W. Hales, and I.B.S. Passi, \emph{Jordan decomposition and
		hypercentral units in integral group rings}, Comm. Algebra \textbf{21}
	(1993), no.~1, 25--35.
	
	\bibitem[AKS08]{AKS08}
	R.Z. Aleev, A.V. Kargapolov, and V.V. Sokolov, \emph{{The ranks of central unit
			groups of integral group rings of alternating groups}}, Fundam. Prikl. Mat.
	\textbf{14} (2008), no.~7, 15--21.
	
	\bibitem[Ale94]{Ale94}
	R.Z. Aleev, \emph{{Higman's central unit theory, units of integral group rings
			of finite cyclic groups and {F}ibonacci numbers}}, Internat. J. Algebra
	Comput. \textbf{4} (1994), no.~3, 309--358.
	
	\bibitem[AP93]{AP93}
	S.R. Arora and I.B.S. Passi, \emph{{Central height of the unit group of an
			integral group ring}}, Comm. Algebra \textbf{21} (1993), no.~10, 3673--3683.
	
	\bibitem[B{\"a}c18]{Bac18}
	A.~B{\"a}chle, \emph{Integral group rings of solvable groups with trivial
		central units}, Forum Math. \textbf{30} (2018), no.~4, 845--855.
	
	\bibitem[B{\"a}c19]{Bac19}
	\bysame, \emph{$3$ questions on \textsf{cut} groups}, Adv. Group Theory Appl.
	\textbf{8} (2019), 157--160.
	
	\bibitem[Bas64]{Bas64}
	H.~Bass, \emph{{{$K$}-theory and stable algebra}}, Inst. Hautes {\'E}tudes Sci.
	Publ. Math. (1964), no.~22, 5--60.
	
	\bibitem[BBM20]{BBM20}
	V.~Bovdi, T.~Breuer, and A.~Mar\'{o}ti, \emph{Finite simple groups with short
		{G}alois orbits on conjugacy classes}, J. Algebra \textbf{544} (2020),
	151--169.
	
	\bibitem[BCJM21]{BCJM21}
	A.~B\"{a}chle, M.~Caicedo, E.~Jespers, and S.~Maheshwary, \emph{Global and
		local properties of finite groups with only finitely many central units in
		their integral group ring}, J. Group Theory \textbf{24} (2021), no.~6,
	1163--1188.
	
	\bibitem[BJJ{\etalchar{+}}23]{BJJKT23}
	A.~B\"achle, G.~Janssens, E.~Jespers, A.~Kiefer, and D.~Temmerman,
	\emph{Abelianization and fixed point properties of units in integral group
		rings}, Math. Nachr. \textbf{296} (2023), no.~1, 8--56.
	
	\bibitem[BK22]{BK22}
	G.K. Bakshi and G.~Kaur, \emph{Central units of integral group rings of
		monomial groups}, Proc. Amer. Math. Soc. \textbf{150} (2022), no.~8,
	3357--3368.
	
	\bibitem[BKMdR23]{BKMdR23}
	A.~B\"achle, A.~Kiefer, S.~Maheshwary, and \'A. del R\'io,
	\emph{Gruenberg-{K}egel graphs: cut groups, rational groups and the prime
		graph question}, Forum Math. \textbf{35} (2023), no.~2, 409--429.
	
	\bibitem[BMP17]{BMP17}
	G.K. Bakshi, S.~Maheshwary, and I.B.S. Passi, \emph{{Integral group rings with
			all central units trivial}}, J. Pure Appl. Algebra \textbf{221} (2017),
	no.~8, 1955--1965.
	
	\bibitem[BMP19]{BMP19}
	\bysame, \emph{{Group rings and the RS-property}}, Comm. Algebra \textbf{47}
	(2019), no.~3, 969--977.
	
	\bibitem[Bov87]{Bov87}
	A.A. Bovdi, \emph{{The multiplicative group of an integral group ring.
			(Mul'tiplikativnaya gruppa tselochislennogo gruppovogo kol'tsa.)}}, {Kod
		GASNTI 27.17.19}, Uzhgorod: Uzhgorodskij Gosudarstvennyj Universitet, 1987.
	
	\bibitem[Bro71]{Bro71}
	A.M. Broshi, \emph{Galois correspondences between the irreducible characters
		and the conjugacy classes of finite groups}, J. Algebra \textbf{19} (1971),
	441--451.
	
	\bibitem[CD10]{CD10}
	D.~Chillag and S.~Dolfi, \emph{{Semi-rational solvable groups}}, J. Group
	Theory \textbf{13} (2010), no.~4, 535--548.
	
	\bibitem[DGLdR24]{DLdR24}
	S.C. Deb\'on, D.~Garc\'ia-Lucas, and A.~del R\'io, \emph{The
		{G}ruenberg-{K}egel graph of finite solvable rational groups}, J. Algebra
	\textbf{642} (2024), 470--479.
	
	\bibitem[DPMS05]{DMS05}
	M.~Dokuchaev, C.~Polcino~Milies, and S.~K. Sehgal, \emph{{Integral group rings
			with trivial central units {II}}}, Comm. Algebra \textbf{33} (2005), no.~1,
	37--42.
	
	\bibitem[dRV24]{dRV24}
	Á. del Río and M.~Vergani, \emph{Uniformly semi-rational groups}, 2024,
	(preprint), 17 pages, arXiv:2411.16563 [math.GR].
	
	\bibitem[DS96]{DS96}
	R.~Dark and C.M. Scoppola, \emph{{On {C}amina groups of prime power order}}, J.
	Algebra \textbf{181} (1996), no.~3, 787--802.
	
	\bibitem[Fer04]{Fer04}
	R.A. Ferraz, \emph{{Simple components and central units in group algebras}}, J.
	Algebra \textbf{279} (2004), no.~1, 191--203.
	
	\bibitem[FS88]{FS88}
	W.~Feit and G.M. Seitz, \emph{{On finite rational groups and related topics}},
	Illinois Journal of Mathematics \textbf{33} (1988), no.~1, 103 -- 131.
	
	\bibitem[GAP18]{GAP4}
	The GAP~Group, \emph{{GAP -- Groups, Algorithms, and Programming, Version
			4.9.1}}, 2018, \url{https://www.gap-system.org}.
	
	\bibitem[GBKM25]{GKM25}
	À. García-Blàzquez, G.~Kaur, and S.~Maheshwary, \emph{Extended cut groups},
	2025, (preprint), 16 pages, arXiv:2503.15658 [math.GR].
	
	\bibitem[GM24]{GM24}
	M.~Garonzi and C.~Marion, \emph{On finite groups with the {M}agnus property},
	Bull. Lond. Math. Soc. \textbf{56} (2024), no.~10, 3114--3128.
	
	\bibitem[Gow76]{Gow76}
	R.~Gow, \emph{Groups whose characters are rational-valued}, J. Algebra
	\textbf{40} (1976), no.~1, 280--299.
	
	\bibitem[GP86]{GP86}
	E.G. Goodaire and M.M. Parmenter, \emph{{Units in alternative loop rings}},
	Israel J. Math. \textbf{53} (1986), no.~2, 209--216.
	
	\bibitem[Gri20]{Gri20}
	N.~Grittini, \emph{A note on cut groups of odd order}, 2020, (preprint), 3
	pages, arXiv:1911.13196 [math.GR].
	
	\bibitem[Heg05]{Heg05}
	P.~Heged\H{u}s, \emph{Structure of solvable rational groups}, Proc. London
	Math. Soc. (3) \textbf{90} (2005), no.~2, 439--471.
	
	\bibitem[Hig40]{Hig40}
	G.~Higman, \emph{{The units of group-rings}}, Proc. London Math. Soc. (2)
	\textbf{46} (1940), 231--248.
	
	\bibitem[Hup67]{Hup67}
	B.~Huppert, \emph{Endliche {G}ruppen. {I}}, Die Grundlehren der Mathematischen
	Wissenschaften, Band 134, Springer-Verlag, Berlin-New York, 1967.
	
	\bibitem[Isa08]{Isa08}
	I.M. Isaacs, \emph{Finite group theory}, Graduate Studies in Mathematics,
	vol.~92, American Mathematical Society, Providence, RI, 2008.
	
	\bibitem[JJdMR02]{JJdMR02}
	E.~Jespers, S.~O. Juriaans, J.M. de~Miranda, and J.R. Rogerio, \emph{On the
		normalizer problem}, J. Algebra \textbf{247} (2002), no.~1, 24--36.
	
	\bibitem[JOdRV13]{JOdRV13}
	E.~Jespers, G.~Olteanu, {\'A}.~del R{\'i}o, and I.~{Van Gelder}, \emph{{Group
			rings of finite strongly monomial groups: central units and primitive
			idempotents}}, J. Algebra \textbf{387} (2013), 99--116.
	
	\bibitem[JPS96]{JPS96}
	E.~Jespers, M.~M. Parmenter, and S.~K. Sehgal, \emph{Central units of integral
		group rings of nilpotent groups}, Proc. Amer. Math. Soc. \textbf{124} (1996),
	no.~4, 1007--1012.
	
	\bibitem[LH12]{LH12}
	Z.~Li and J.~Hai, \emph{{The normalizer property for integral group rings of
			finite solvable {T}-groups}}, J. Group Theory \textbf{15} (2012), no.~2,
	237--243.
	
	\bibitem[Mah18]{Mah18}
	S.~Maheshwary, \emph{Integral {G}roup {R}ings {W}ith {A}ll {C}entral {U}nits
		{T}rivial: {S}olvable {G}roups}, Indian J. Pure Appl. Math. \textbf{49}
	(2018), no.~1, 169--175.
	
	\bibitem[Maz99]{Maz99}
	M.~Mazur, \emph{The normalizer of a group in the unit group of its group ring},
	J. Algebra \textbf{212} (1999), no.~1, 175--189.
	
	\bibitem[Mor22]{Mor22}
	A.~Moret\'o, \emph{Field of values of cut groups and {$k$}-rational groups}, J.
	Algebra \textbf{591} (2022), 111--116.
	
	\bibitem[MP18]{MP18}
	S.~Maheshwary and I.B.S. Passi, \emph{The upper central series of the unit
		groups of integral group rings: a survey}, Group Theory and Computation
	(2018), 175--195.
	
	\bibitem[Nav09]{Nav09}
	G.~Navarro, \emph{Quadratic characters in groups of odd order}, J. Algebra
	\textbf{322} (2009), no.~7, 2586--2589.
	
	\bibitem[NT08]{NT08}
	G.~Navarro and P.H. Tiep, \emph{Rational irreducible characters and rational
		conjugacy classes in finite groups}, Trans. Amer. Math. Soc. \textbf{360}
	(2008), no.~5, 2443--2465.
	
	\bibitem[NT10]{NT10}
	G.~Navarro and J.~Tent, \emph{Rationality and {S}ylow 2-subgroups}, Proc.
	Edinb. Math. Soc. (2) \textbf{53} (2010), no.~3, 787--798.
	
	\bibitem[OdRS04]{OdRS04}
	A.~Olivieri, {\'A}.~del R{\'i}o, and J.J. Sim{\'o}n, \emph{{On monomial
			characters and central idempotents of rational group algebras}}, Comm.
	Algebra \textbf{32} (2004), no.~4, 1531--1550.
	
	\bibitem[Pat75]{Pat75}
	Z.F. Pata\u\i, \emph{Center of the multiplicative group of an integral group
		ring}, Materials of the Second Conference of Young Scientists. Notes of the
	Scientific Center of the Academy of Sciences of the UkrSSR, Math. Sciences
	Section [in Russian], Uzhgor. Univ., Uzhgorod, 1975, pp.~37--41.
	
	\bibitem[Pat78]{Pat78}
	Z.F. Pata\u\i, \emph{The multiplicative group of a group ring}, Studies in the
	qualitative theory of differential equations and its applications
	({U}krainian), Akad. Nauk Ukrain. SSR, Inst. Mat., Kiev, 1978, pp.~47--48,
	75.
	
	\bibitem[PS99]{MS99}
	C.~{Polcino Milies} and S.K. {Sehgal}, \emph{{Central units of integral group
			rings}}, {Commun. Algebra} \textbf{27} (1999), no.~12, 6233--6241.
	
	\bibitem[PV25]{PV25}
	E.~Pacifici and M.~Vergani, \emph{On quadratic rational frobenius groups},
	2025, (preprint), arXiv:2408.15841 [math.GR].
	
	\bibitem[RS90]{RS90}
	J.~Ritter and S.K. Sehgal, \emph{{Integral group rings with trivial central
			units}}, Proc. Amer. Math. Soc. \textbf{108} (1990), no.~2, 327--329.
	
	\bibitem[RS05]{RS05}
	\bysame, \emph{{Trivial units in {$RG$}}}, Math. Proc. R. Ir. Acad.
	\textbf{105A} (2005), no.~1, 25--39.
	
	\bibitem[Sou25]{Sou25}
	G.A.L. Souza, \emph{On groups with at most five irrational conjugacy classes},
	Journal of Pure and Applied Algebra \textbf{229} (2025), no.~6, 107980.
	
	\bibitem[Ten12]{Ten12}
	J.~Tent, \emph{Quadratic rational solvable groups}, J. Algebra \textbf{363}
	(2012), 73--82.
	
	\bibitem[Tre17]{Tre17}
	S.~Trefethen, \emph{Non-abelian composition factors of {$m$}-rational groups},
	J. Algebra \textbf{485} (2017), 288--309.
	
	\bibitem[Tre19]{Tre19}
	\bysame, \emph{Non-{A}belian composition factors of finite groups with the
		{CUT}-property}, J. Algebra \textbf{522} (2019), 236--242.
	
	\bibitem[Ver24]{Ver24}
	M.~Vergani, \emph{Uniformly semi-rational groups}, 2024, (preprint),
	arXiv:2410.11069 [math.GR].
	
\end{thebibliography}
\end{document}